\theoremstyle{plain}
\newtheorem{thm}{Theorem}[section]
\theoremstyle{plain}
\newtheorem{lem}[thm]{Lemma}
\newtheorem{prop}[thm]{Proposition}
\theoremstyle{definition}
\newtheorem{defi}{Definition}[section]
\newtheorem{rem}{Remark}[section]
\newenvironment{Assumptions}
{
\setcounter{enumi}{0}

\begin{enumerate}}
{\end{enumerate} }
\newcommand{\R}{\ensuremath{\mathbb{R}}}
\newcommand{\goto}{\ensuremath{\rightarrow}}
\newcommand{\grad}{\ensuremath{\nabla}}
\newcommand{\eps}{\ensuremath{\varepsilon}}
\numberwithin{equation}{section} \allowdisplaybreaks
\title[Stochastic optimal control and existence of invariant measure]
{Nonlinear SPDE driven by L\'{e}vy noise: Well-posedness, optimal control and invariant measure}
\date{}
\subjclass[2000]{45K05, 46S50, 49L20, 49L25, 91A23, 93E20}
\keywords{ Nonlinear stochastic PDE, Strong and martingale solution, Jakubowski’s version of Skorokhod theorem, Weak optimal control, Invariant measure.}
\author[ Kavin]{Kavin R}
\address[Kavin R] {\newline 
Department of Mathematics,
Indian Institute of Technology Delhi,
Hauz Khas, New Delhi, 110016, India.}
\email[] {maz198757@iitd.ac.in}
\author[Ananta K. Majee]{Ananta K. Majee}
\address[Ananta K. Majee]{\newline
Department of Mathematics, Indian Institute of Technology Delhi,
Hauz Khas, New Delhi-110016, India.}
\email[]{majee@maths.iitd.ac.in}
\thanks{}
\begin{document}
\begin{abstract}
In this article, we study a nonlinear stochastic control problem perturbed by multiplicative L\'{e}vy noise, where the nonlinear operator~(in divergence form) satisfies $p$-type growth with coercivity assumptions. By using Aldous tightness criteria and Jakubowski’s version of the Skorokhod theorem on non-metric spaces along with standard $L^1$-method, 
we establish existence of path-wise unique strong solution. Formulating the associated control problem, and using variational approach together with the convexity property of cost functional (in control variable), we establish existence of a weak optimal solution of the underlying problem. We use the technique of Maslowski and Seidler to prove existence of an invariant measure for uncontrolled SPDE driven with multiplicative L\'{e}vy noise. 
\end{abstract}

\maketitle

\section{Introduction}
Let $D\subset \R^d$ be a bounded domain with Lipschitz boundary $\partial D$,  $W(t)$ be a cylindrical Wiener process in a separable Hilbert space $L^2$, $N({\rm d}z,{\rm d}t)$ be a time-homogeneous Poisson random measure independent of $W$ \cite[pp. 631]{peszat,erika2009} on $({\tt E},\mathcal{E})$ with intensity measure $m({\rm d}z)$, defined on the filtered probability space
$(\Omega,  \mathcal{F},\mathbb{P}, \{ \mathcal{F}_t\}_{t \geq 0})$ satisfying the usual hypothesis,  where $({\tt E},\mathcal{E},m)$ is a $\sigma$-finite measure space. We are interested in the theory of existence of a weak optimal solution and invariant measure for the nonlinear stochastic control problem perturbed by L\'{e}vy noise:
 \begin{equation}\label{eq:nonlinear}
 \begin{aligned}
  {\rm d} u -\mbox{div}_x \big( {\tt A}(x, u, \grad u) + \vec{F}(u)\big)\,{\rm d}t &=  U\,{\rm d}t + {\tt h}(u)\,dW(t) + \int_{\tt E}\eta(x,u;z)\widetilde{N}({\rm d}z,{\rm d}t) \quad \text{in}\,\,\Omega \times D_T, \\
  u&=0 \quad \text{on}\,\, \Omega \times \partial D_T\,, \\
  u(0,\cdot)&=u_0(\cdot)\quad \text{in}\,\,\Omega \times  D\,,
  \end{aligned}
 \end{equation}
 where  $T>0$, $D_T=(0,T)\times D$ and $\partial D_T=(0,T)\times \partial D$. In \eqref{eq:nonlinear},  ${\tt h}: L^2 \mapsto \mathcal{L}_2(L^2)$ and $\eta:D\times \R \times {\tt E} \mapsto \R$
 are given noise coefficients and $$ \widetilde{N}({\rm d}z,{\rm d}t)= N({\rm d}z,{\rm d}t)-\, m({\rm d}z)\,{\rm d}t,$$ 
 the  time homogeneous compensated Poisson random measure.  Moreover, the nonlinear operator ${\tt A}: D\times \R \times \R^d \goto \R^d$ is a Carath\'{e}odory function satisfying coercivity and appropriate growth conditions ~(see Section \ref{sec:technical-framework} for the complete list of assumptions). Furthermore, $\vec{F}:\R \goto \R^d$ is a given Lipschitz continuous function, and $U$, serves as a control, is a given $L^2$-valued  $\{\mathcal{F}_t\}$-predictable process on the given stochastic basis. Furthermore, $W(t)$ can be expressed as
             \begin{equation*}
                 W(t) := \sum_{n=1}^{\infty} e_n \beta_n(t),
             \end{equation*}
             for some orthonormal basis $(e_n)_{n \in \mathbb{N}}$ of $L^2$ and a sequence of independent Brownian motions $(\beta_{n})_{n \in \mathbb{N}}$ on $\R$. Then $W$ can be interpreted as a $\mathcal{U}$-valued $Q$-Wiener process with covariance matrix $ {\rm diag} (\frac{1}{n^2})$, where $\mathcal{U}$ is the completion of $L^2$ with the scalar product~(see \cite[Section $4.1~\&~ 4.2$]{claud})
             \begin{equation*}
                (u,v)_\mathcal{U} := \sum_{n=0}^{\infty} \frac{(v,e_n)_2 (u,e_n)_2}{n^2}, \quad u , v \in L^2. 
             \end{equation*}   
The equation \eqref{eq:nonlinear} could be viewed as a stochastic perturbation of an evolutionary $p$-type growth with nonlinear sources.  This type of equations arise in many different fields e.g., in Mechanics, Physics, and Biology \cite{Dibenedetto1993, Wu2001}. Due to wide applications in physical contexts and also for more technical novelties, the study of well-posedness results for \eqref{eq:nonlinear} is more subtle. Because of nonlinear drift and diffusion terms in \eqref{eq:nonlinear}, it may not be possible to define a semi-group solution. Moreover, one cannot use the results of monotone or locally monotone SPDEs, see e.g.,  \cite{Liu2015,Pardoux1975} due to the presence of nonlinear perturbation of the type $-{\rm div} \vec{F}(u)$ for Lipschitz continuous $\vec{F}$.

In a recent articles \cite{Majee2020,Vallet2019}, the authors have considered stochastic evolutionary $p$-Laplace equation~($p>2$) perturbation by nonlinear sources. In \cite{Majee2020}, the author used pseudo monotonicity methods, Aldous tightness criterion and the Jakubowski-Skorokhod theorem \cite{Jakubowski1998} on a non-metric space to prove existence of a martingale solution. Moreover, by using variational approach, the author has shown the existence of a weak optimal solution to corresponding initial value control problem. In  \cite{Vallet2019}, existence of a martingale solution was shown by using  pseudo monotonicity methods along with the theorems of Skorokhod and Prokhorov. Moreover, the authors established well-posedness of strong solution   via path-wise uniqueness and Gy\"{o}ngy-Krylov characterization \cite{Krylov1996} of convergence in probability. The well-posedness results of the stochastic doubly nonlinear $p$-Laplace type equation~$(p>2)$ has been established in \cite{Wittbold2019, Majee2022}. Very recently, the authors in \cite{Vallet2021} have shown well-posedness of strong solution of \eqref{eq:nonlinear} in case of additive Brownian noise, where $p$- growth of the operator ${\tt A}$ satisfies the condition $\frac{2d}{d+1}<p< \infty$. By constructing an approximate solutions (via semi-implicit time discretization), 
and using the theorems of Skorokhod and Prokhorov,  first they have shown existence of a martingale solution and then using an argument of path-wise uniqueness and Gy\"{o}ngy-Krylov characterization the authors have established existence of a unique strong solution. 
\subsection{Aim and outline of the paper}
In this article, our main aim is to establish ${\rm i)}$ well-posedness of strong solution for \eqref{eq:nonlinear}, ${\rm ii)}$ existence of a weak optimal solution and ${\rm iii)}$ existence of an invariant measure in case of uncontrolled SPDE. We first construct an approximate solutions  $\tilde{u}_{\kappa}:=\big\{ \tilde{u}_{\kappa}(t); t\in [0,T]\}$~(cf.~\eqref{eq:defi-piecewise-affine-function}), and then derive its necessary {\it a-priori} estimates which is used to show  tightness of $\mathcal{L}(\tilde{u}_{\kappa})$, in some appropriate 
space via the Aldous condition~(see Definition \ref{defi:aldous-condition}).
Jakubowski's version of the Skorokhod theorem on a non-metric space is used to get
a new probability space $(\bar{\Omega}, \bar{\mathcal{F}}, \bar{\mathbb{P}})$ and sequence of processes $v_\kappa$ (cf. \eqref{defi:discrete-variables-new-prob-space}) and $v_*$ with $v_\kappa \goto v_* $ a.s. A Minty type monotonicity argument is used to identify the weak limit of the sequence $\big({\tt A}(x, v_\kappa, \grad v_\kappa))$~(cf.~{\bf step} ${\rm (ii)}$ of Subsection \ref{subsec:existence}) ---which then yields the existence of
a weak solution $\bar{\pi}:=\big( \bar{\Omega}, \bar{\mathcal{F}},  \bar{\mathbb{P}}, \bar{\mathbb{F}}, W_*, N_*, v_*, U_*\big)$~(cf.~{\bf step} ${\rm (iii)}$ of Subsection \ref{subsec:existence})
of \eqref{eq:nonlinear}. We use standard $L^1$-contraction principle to establish path-wise uniqueness of weak solutions. As a consequence, we are able to show existence of a unique strong solution. 

 We formulate the stochastic optimal control problem as follows: given a deterministic target profile ${\tt u}_{\rm det}$, the terminal payoff $
 {\tt \Psi}$, find a weak admissible solution tuple $\pi^*:=\big( \Omega^*, \mathcal{F}^*, \mathbb{P}^*,\{ \mathcal{F}_t^*\}, W^*,  N^*, u^*, U^*\big)$ such that it minimizes the cost functional
\begin{equation} \label{eq:control-problem}
 \begin{aligned}
  \mathcal{\pmb J}(\pi)= \mathbb{E}\Big[ \int_0^T\big( \|u(t)-{\tt u}_{\rm det}(t)\|_{L^2}^2 + \|U(t)\|_{L^2}^2\big)\,{\rm d}t + {\tt \Psi}(u(T))\Big] \\
  \text{with}~~\pi=\big( \Omega, \mathcal{F},\mathbb{P},\{ \mathcal{F}_t\}, W, N, u, U\big)~~\text{subject to}~~\eqref{eq:nonlinear}\,\,.
 \end{aligned}
\end{equation}
We use a standard variational method~(cf.~ \cite{Serrano,TMAV-2019,Majee2020}) to prove existence of weak optimal solution of \eqref{eq:control-problem}. 
We first consider a minimizing weak admissible solutions 
$\pi_n=\big( \Omega_n, \mathcal{F}_n,\mathbb{P}_n,\{ \mathcal{F}_t^n\}, W_n, N_n, u_n, U_n\big)$~(which exists thanks to Theorem \ref{thm:existence-weak}), and then follow the similar arguments as invoked in existence result together with the convexity property of the cost functional $\mathcal{\pmb J}$( with
respect to the control variable) to show existence of an optimal solution in the sense of Definition \ref{defi:optimal-control}.

 Finally, we devoted to establish the existence of invariant measure for the strong solution of \eqref{eq:nonlinear-no-control} by invoking the technique of Maslowski and Seidler \cite{Seidler1999}. We prove two conditions namely 
${\rm a)}$ boundedness in probability and ${\rm b)}$ sequentially weakly Feller property of the Markov semi-group $\{P_t\}_{t\ge 0}$~(cf.~\eqref{defi:semi-group}) with the help of  continuous dependency of martingale solution of \eqref{eq:nonlinear-no-control} on initial data (cf.~ Remark \ref{rem:cont-dependence-initial-cond}) and uniqueness in law. 
\vspace{.1cm}

The remaining part of the paper is organized as follows. We state the technical assumptions, define the notion of solutions
for the problems \eqref{eq:nonlinear} and \eqref{eq:control-problem}, and  state the main results of the paper in Section \ref{sec:technical-framework}. Section \ref{sec:existence-weak-solu} is devoted to establish the existence of a unique strong solution of \eqref{eq:nonlinear}.  The existence of a weak optimal solution of \eqref{eq:control-problem} is shown in Section \ref{thm:existence-optimal-control}. In the final section \ref{Existence of Invariant measure}, we prove the existence of an invariant measure.

 \section{Technical framework and statement of the main results}\label{sec:technical-framework}
In every part of this paper, we use the letters $C,\,K$ etc to denote various generic constants. Here and in the sequel, we denote by $(L^q, \|\cdot\|_{L^q}), q\in \mathbb{N}$ the standard spaces
 of $q^{th}$ order integrable functions on $D$. By $(W_0^{1,p}, \|\cdot\|_{W_0^{1,p}})$, we denote the standard Sobolev spaces on $D$ for $p\in \mathbb{N}$. The dual space of $W_0^{1,p}$ is denoted by $W^{-1,p^\prime}$ with duality pairing $\big\langle \cdot,\cdot\big\rangle$, where $p^\prime$ is the convex conjugate of $p$. By $\mathcal{L}_2(L^2)$, we denote the space of Hilbert-Schmidt operators from $L^2$ to $L^2$ on $D$.  We denote by $(\mathbb{X},w)$ the topological space $\mathbb{X}$ equipped with the weak topology.
 \vspace{.1cm}
 \subsection{Technical assumptions:} Throughout this article, we assume the following assumptions on the data.
 \begin{Assumptions}
\item \label{A1} $u_0 \in L^2$, and $U$ is a $L^2$-valued predictable process such that
$U\in L^2(\Omega; L^2(0,T;L^2))$. 
\item \label{A2} Let $p>2$ and ${\tt A}: D \times \R \times \R^d \goto \R^d$ be a Carath\'{e}odory function which satisfies the following conditions:
\begin{itemize}
\item [(i)] \underline{Monotonicity of ${\tt A}$:} for a.e. $x\in D$ and $\forall~\zeta_1, \zeta_2 \in \R^d, \theta \in \R$,
\begin{align*}
\big({\tt A}(x,\theta, \zeta_1)- {\tt A}(x,\theta, \zeta_2) \big)\cdot (\zeta_1- \zeta_2) \ge 0\,.
\end{align*}
\item [(ii)] \underline{ Coercivity of ${\tt A}$:} there exist a constant $C_1>0$ and a function $K_1 \in L^1$ such that 
\begin{align*}
 {\tt A} (x,\lambda, \zeta) \cdot \zeta \ge C_1 |\zeta|^p - K_1(x) \quad \text{for a.e. $x\in D$ and for all $\zeta \in \R^d, \lambda \in \R$}.
\end{align*}
\item [(iii)] \underline{ Boundedness of ${\tt A}$:} there exist constants $C_2, C_3 \ge 0$ and $0\le K_2 \ni L^{p^\prime}$ such that 
\begin{align*}
|{\tt A}(x, \lambda, \xi)| \le C_2 |\xi|^{p-1}  + C_3 |\lambda|^{p-1}+ K_2(x) \quad \text{ for a.e. $x\in D$ and  $ \forall~\xi \in \R^d, \lambda \in \R$}.
\end{align*}
\item [(iv)]\underline{ Lipschitzness of ${\tt A}$:} ${\tt A}$ is Lipschitz continuous in the second argument in the sense that there exist a constant $C_4 \ge 0$ and  $0\le K_3 \ni L^{p^\prime}$ such that for a.e. $x\in D$ and  $ \forall~\xi \in \R^d, \lambda_1, \lambda_2 \in \R$,
\begin{align*}
\big| {\tt A}(x,\lambda_1, \xi)- {\tt A}(x,\lambda_2, \xi)\big| \le \big( C_4 |\xi|^{p-1} + K_3(x)\big) |\lambda_1-\lambda_2|\,.
\end{align*}
\end{itemize}
 \item \label{A3} $\vec{F}:\R \mapsto \R^d$ is Lipschitz continuous with $\vec{F}(0)=\vec{0}$.
\item \label{A4} For each $u \in L^2$, the map ${\tt h}(u) : L^2 \goto L^2$  is defined by ${\tt h}(u) e_n =\{x\mapsto {\tt h}_n(u(x))\}$, where $\{e_n\}_n$ is an orthonormal basis of $L^2$ and each ${\tt h}_n(\cdot)$ is a regular function on $\R$. In particular, we assume that ${\tt h}_n \in C(\R)$ satisfies the following bounds: for $\xi, \zeta \in \R$
\begin{align*}
    \sum_{n \ge 1} | {\tt h}_n (\xi)|^2 \le  C_5(1+|\xi|^2), \quad 
    \sum_{n \ge 1} |{\tt h}_n (\xi) - {\tt h}_n(\zeta)|^2 \le L_\sigma |\xi-\zeta|^2
\end{align*}
for some positive constants $L_\sigma$ and $C_5$. Define $ \displaystyle {\tt h}(u):= \sum_{n \ge 1}^{} {\tt h}_n (u(\cdot)) e_n$. 
 An immediate consequence of the above assumption yields 
\begin{align*}
 \|{\tt h}(v) \|_{\mathcal{L}_2(L^2)}^{2} \le C_5 (|D| + \|v\|_{L^2}^{2})\quad \forall\, v \in L^2\,. 
\end{align*}
\item \label{A5} ${\tt E}= {\tt O} \times \R^{*}$ for some ${\tt O}$, a subset of the Euclidean space. The Borel measure $m$ on ${\tt E}$ has the form $\lambda \times \mu$, where $\lambda$ is a Radon measure on ${\tt O}$ and $\mu$ is so-called one dimensional L\'{e}vy measure on $\R^{*}$.
\item \label{A6}  There exist $\gamma(z) \in L^2({\tt E},m)$ with $0 \le \gamma(z) \le 1$, 
constants $\lambda^*\in [0,1), L_\eta >0$, and a non-negative function $g\in L^\infty$ such that
 for all $z \in {\tt E},~\zeta, \xi \in \R$ and $x,y \in D$,
 \begin{align*}
 |\eta(x,\zeta;z)| \le g(x) \, \gamma(z) \, (1 + |\zeta|), \quad 
  | \eta(x, \zeta;z)-\eta(y, \xi;z)|  & \leq \gamma(z) \big(\lambda^* |\zeta-\xi| + L_\eta |x-y|\big)\,.
 \end{align*}
 We denote 
 \begin{align*}
     c_{\gamma}:= \int_{\tt E} (\gamma^2(z))\,m({\rm d}z) < + \infty.
 \end{align*}
\end{Assumptions}
\subsection{Notion of solutions and main theorem} First we define notion of strong solution for \eqref{eq:nonlinear}.
 
 \begin{defi}\label{defi:strong-solun}(Strong solution) 
 Let  $(\Omega, \mathcal{F}, \mathbb{P}, \{\mathcal{F}_t\}_{t\ge 0} )$ be a given complete stochastic basis, $W$ be a cylindrical Wiener process in $L^2$ and $N$ be a  time-homogeneous Poisson random measure on ${\tt E}$ with intensity measure $m({\rm d}z)$ defined on $(\Omega, \mathcal{F}, \mathbb{P}, \{\mathcal{F}_t\}_{t\ge 0} )$.  Let $U$ be a $L^2$-valued $\{\mathcal{F}_t\}$-predictable stochastic process with  $ U \in L^2(\Omega; L^2(0,T;L^2))$ and $u_0 \in L^2$ be given. We say that a predictable process $u:\Omega \times[0,T]\goto L^2$ is a strong solution of  \eqref{eq:nonlinear}, if and only if $ u\in L^2(\Omega; \mathbb{D}([0,T];L_w^2))\cap L^p(\Omega;L^p(0,T; W_0^{1,p}))$ such that $u(0,\cdot)=u_0$ in $L^2$ and 
$\mathbb{P}$-a.s., and for all $t\in [0,T]$, 
   \begin{align}
   u(t)& = u_0 +  \int_0^t U(s)\,{\rm d}s + \int_{0}^t \mbox{div}_x \big({\tt A}(x, u(s),\nabla u(s)) + \vec{F}(u(s))\big)\,{\rm d}s \notag \\
  & \hspace{1cm}+ \int_0^t {\tt h}(u(s))\,dW(s) +  \int_0^t \int_{\tt E}\eta(x,u(s);z)\widetilde{N}({\rm d}z,{\rm d}s)  \quad \text{in}~~ W^{-1,p^\prime}\,. \label{eq:defi-weak-form}
  \end{align}
  \end{defi}
In general, for a non-Lipschitz drift and diffusion operators, it may not be possible to prove existence of a strong solution.
  \begin{defi}
  We say that a $8$-tuple $\bar{\pi}=\big( \bar{\Omega}, \bar{\mathcal{F}}, \bar{\mathbb{P}}, \{\bar{\mathcal{F}}_t\},  
\bar{W}, \bar{N}, \bar{u}, \bar{U}\big)$ is a weak solution of \eqref{eq:nonlinear}, if
 \begin{itemize}
  \item [(i)] $(\bar{\Omega}, \bar{\mathcal{F}},\bar{\mathbb{P}}, \{\bar{\mathcal{F}}_t\}_{t\ge 0} )$ is a complete stochastic basis,
  \item[ii)] $\bar{W}$ is a cylindrical Wiener noise on $L^2$, and  $\bar{N}$ is a time-homogeneous Poisson random measure on ${\tt E}$ with intensity measure $m({\rm d}z)$ defined on  $(\Bar{\Omega}, \Bar{\mathcal{F}}, \Bar{\mathbb{P}}, \{\Bar{\mathcal{F}}_t\}_{t\ge 0} )$,
  \item[(iii)] $\bar{U}$ is  a $L^2$-valued $\{\bar{\mathcal{F}_t}\}$-predictable stochastic process with  $ \bar{U}\in L^2(\bar{\Omega}; L^2(0,T;L^2))$
   \item[(iv)] $\bar{u}$ is a $L^2$-valued $\{\bar{\mathcal{F}}_t\}$-predictable  process with $ \bar{u}\in L^2(\bar{\Omega}; \mathbb{D}([0,T];L_w^2))\cap L^p(\bar{\Omega};L^p(0,T; W_0^{1,p}))$ such that 
$\bar{\mathbb{P}}$-a.s., and for all $t\in [0,T]$ \eqref{eq:defi-weak-form} holds. 
 \end{itemize}
 \end{defi}

  \begin{rem} 
 A-priori, it is not known that  $ \int_{0}^t \mbox{div}_x \big({\tt A}(x, u(s),\nabla u(s)) + \vec{F}(u(s))\big)\,{\rm d}s \in L^2$. Since the rest of the terms in  \eqref{eq:defi-weak-form} are in $L^2$ for all $t\in [0,T]$, the equality \eqref{eq:defi-weak-form} holds also in $L^2$. 
 \end{rem}
\begin{thm}[Existence of weak solution]\label{thm:existence-weak}
 Let $W$ and  $N$ be a $L^2$-valued cylindrical Wiener process and time-homogeneous Poisson random measure on ${\tt E}$ with the intensity measure $m({\rm d}z)$ respectively defined on the given stochastic basis   $\big(\Omega, \mathcal{F}, \mathbb{P}, \{\mathcal{F}_t\} \big)$ and $U\in L^2(\Omega; L^2(0,T;L^2))$. Then, under the assumptions  \ref{A1}-\ref{A6}, there exists a weak solution
$\bar{\pi}=\big( \bar{\Omega}, \bar{\mathcal{F}},
\bar{\mathbb{P}},\{\bar{\mathcal{F}}_t\}, \bar{W},
 \bar{N}, \bar{u}, \bar{U}\big)$ of \eqref{eq:nonlinear} such that  $\mathcal{L}(U)=\mathcal{L}(\bar{U})$ on $L^2(0,T;L^2)$. Moreover, there exists $C>0$ such that
\begin{equation}\label{esti:bound-weak-solun}
 \bar{\mathbb{E}}\Big[\sup_{0\le t\le T} \|\bar{u}(t)\|_{L^2}^2 + \int_0^T \| \bar{u}(t)\|_{W_0^{1,p}}^p\,{\rm d}t \Big] \le C\Big( \|u_0\|_{L^2}^2 + \bar{\mathbb{E}}\Big[\int_0^T \|\bar{U}(t)\|_{L^2}^2\,{\rm d}t\Big]\Big)\,.
\end{equation}
\end{thm}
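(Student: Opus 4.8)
The plan is to construct approximate solutions by a semi-implicit time discretisation, derive uniform a-priori bounds, extract a limit on a new stochastic basis via tightness together with Jakubowski's version of the Skorokhod theorem, and finally identify the limit of the nonlinear term by a Minty-type monotonicity argument.

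\textbf{Step 1 (Time-discrete scheme and solvability of each step).} Fix $n\in\N$, set $\tau=T/n$, $t_k=k\tau$, and $\Delta_k W:=W(t_{k+1})-W(t_k)$. Starting from $u^0=u_0$, I would define $u^{k+1}$ recursively as the $W_0^{1,p}$-valued solution of
\begin{equation*}
u^{k+1}-u^{k}-\tau\,\Div\big({\tt A}(x,u^{k+1},\grad u^{k+1})+\vec F(u^{k+1})\big)
=\tau\,U^{k}+{\tt h}(u^{k})\,\Delta_k W+\int_{\tt E}\eta(x,u^{k};z)\,\widetilde N({\rm d}z,(t_k,t_{k+1}]),
\end{equation*}
where $U^k$ is a local time-average of $U$. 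For fixed $\mathcal F_{t_k}$-data the right-hand side is a given element of $L^2\subset W^{-1,p'}$, and the operator $v\mapsto v-\tau\,\Div\big({\tt A}(x,v,\grad v)+\vec F(v)\big)$ is bounded, coercive on $W_0^{1,p}$ (by \ref{A2}(ii) and $p>2$, with the Lipschitz term $\vec F$ from \ref{A3} absorbed by Young's inequality) and pseudomonotone in the sense of Leray--Lions (the principal part being monotone in the gradient by \ref{A2}(i) and the lower-order part compact). Hence it is surjective, giving existence of $u^{k+1}$; $\mathcal F_{t_k}$-measurability of the selection follows from a Galerkin/measurable-selection argument. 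I would then form the piecewise-constant and piecewise-affine interpolants $\tilde u_\kappa$ (cf.~\eqref{eq:defi-piecewise-affine-function}).

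\textbf{Step 2 (A-priori estimates).} Testing the scheme with $u^{k+1}$ and summing, I would combine discrete summation-by-parts with coercivity \ref{A2}(ii), the Hilbert--Schmidt bound in \ref{A4}, the growth bound in \ref{A6} (using $c_\gamma<\infty$), Young's inequality and a discrete Gronwall lemma, the martingale increments being controlled by the Burkholder--Davis--Gundy inequality. This yields the uniform bound $\sup_\kappa\E\big[\sup_{t}\|\tilde u_\kappa(t)\|_{L^2}^2+\int_0^T\|\tilde u_\kappa\|_{W_0^{1,p}}^p\,{\rm d}t\big]\le C\big(\|u_0\|_{L^2}^2+\E\int_0^T\|U\|_{L^2}^2\,{\rm d}t\big)$, and, through the boundedness \ref{A2}(iii), a uniform $L^{p'}$-bound on $\{{\tt A}(x,\tilde u_\kappa,\grad\tilde u_\kappa)\}$. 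Lower semicontinuity of the norms under the (weak) limit in Step~4 then transfers this to \eqref{esti:bound-weak-solun}, the right-hand side being expressed in the data since $\mathcal L(U)=\mathcal L(\bar U)$.

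\textbf{Step 3 (Tightness and Skorokhod).} I would establish tightness of $\mathcal L(\tilde u_\kappa)$ in the space $\mathbb X:=(L^p(0,T;W_0^{1,p}),w)\cap\mathbb D([0,T];L^2_w)\cap L^2(0,T;L^2)$: the uniform bounds of Step~2 give tightness in the weak topologies, the compact embedding $W_0^{1,p}\hookrightarrow\hookrightarrow L^2$ (valid for $p>2$) supplies the strong $L^2(0,T;L^2)$ component, and the Aldous condition (Definition~\ref{defi:aldous-condition}) controls the time increments in the Skorokhod sense, the jump part being handled via the $L^2$-isometry of the compensated Poisson integral. Since the weak topology on $\mathbb X$ is non-metrisable, I would invoke Jakubowski's theorem to obtain a new stochastic basis carrying processes $v_\kappa$, $v_*$, $W_*$, $N_*$, $U_*$ with $v_\kappa\to v_*$ a.s.\ in $\mathbb X$, matching laws, and, by a martingale-characterisation argument, $W_*$ a cylindrical Wiener process and $N_*$ a Poisson random measure with intensity $m$ relative to the limit filtration.

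\textbf{Step 4 (Passage to the limit and Minty identification).} Along the a.s.\ convergent sequence, the deterministic and lower-order terms pass to the limit directly (in particular $\vec F(v_\kappa)\to\vec F(v_*)$ strongly in $L^2$ by \ref{A3} and the $L^2(0,T;L^2)$-convergence), the stochastic integrals converge by the convergence of $W_*,N_*$ together with the Lipschitz bounds in \ref{A4}, \ref{A6} and $\lambda^*<1$, and ${\tt A}(x,v_\kappa,\grad v_\kappa)\rightharpoonup\chi$ weakly in $L^{p'}(\bar\Omega\times D_T)$. It remains to show $\chi={\tt A}(x,v_*,\grad v_*)$. The main obstacle is precisely this identification in the jump-diffusion setting: using Itô's formula for $\|v_\kappa\|_{L^2}^2$ I would derive the energy balance, take $\limsup$ to obtain $\limsup_\kappa\bar{\mathbb E}\int_0^T\langle{\tt A}(x,v_\kappa,\grad v_\kappa),\grad v_\kappa\rangle\,{\rm d}t\le\bar{\mathbb E}\int_0^T\langle\chi,\grad v_*\rangle\,{\rm d}t$, the delicate point being that the quadratic-variation contributions of the Wiener and compensated-Poisson integrals must be shown to converge and cancel correctly. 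Feeding this into the monotonicity inequality $\bar{\mathbb E}\int_0^T\!\int_D\big({\tt A}(x,v_\kappa,\grad v_\kappa)-{\tt A}(x,v_\kappa,\grad w)\big)\cdot(\grad v_\kappa-\grad w)\ge 0$, using \ref{A2}(i), (iv) and the strong $L^2$-convergence to replace $v_\kappa$ by $v_*$ in the test slot, and letting $w=v_*-\lambda\varphi$ with $\lambda\to0^+$ (the Minty trick), yields $\chi={\tt A}(x,v_*,\grad v_*)$. This produces a weak solution satisfying \eqref{eq:defi-weak-form}, completing the proof.
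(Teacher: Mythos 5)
Your proposal is correct and follows essentially the same route as the paper: a semi-implicit time discretisation solved via pseudomonotone operator theory, uniform a-priori bounds by testing with $u^{k+1}$ plus BDG and discrete Gronwall, tightness through the Aldous condition combined with Jakubowski's Skorokhod theorem on a non-metric space, and a Minty-type monotonicity argument to identify the weak $L^{p'}$-limit of ${\tt A}(x,v_\kappa,\grad v_\kappa)$. The only deviations are cosmetic (the paper mollifies the initial datum into $W_0^{1,p}$, uses an explicit $L^2$-projection $\Pi_\kappa$ for the control, and works with a slightly different tightness space $\mathcal{Z}_T$), none of which changes the substance of the argument.
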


\begin{thm}[Path-wise uniqueness]\label{thm:pathwise-uniqueness}
 Let the assumptions \ref{A1}-\ref{A6} hold. If $u_1$ and $u_2$ are two  martingale solutions of  \eqref{eq:nonlinear} with a control $U$ defined on the same stochastic basis $\big(\Omega,  \mathcal{F},\mathbb{P}, \mathbb{F}:=\{\mathcal{F}_t\}\big)$, then $\mathbb{P}$-a.s., $u_1(t,x)=u_2(t,x)$ for a.e. $(t,x)\in D_T$. 
 \end{thm}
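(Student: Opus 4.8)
The plan is to study the difference $w:=u_1-u_2$ of the two martingale solutions. Since $u_1$ and $u_2$ are driven by the same data $(u_0,U,W,N)$ on the common basis $\big(\Omega,\mathcal F,\mathbb P,\mathbb F\big)$, subtracting the two copies of \eqref{eq:defi-weak-form} shows that $w$ satisfies, in $W^{-1,p^\prime}$ and $\mathbb P$-a.s.,
\begin{align*}
 w(t)&=\int_0^t\Div\big({\tt A}(x,u_1,\grad u_1)-{\tt A}(x,u_2,\grad u_2)+\vec F(u_1)-\vec F(u_2)\big)\,{\rm d}s + \int_0^t\big({\tt h}(u_1)-{\tt h}(u_2)\big)\,{\rm d}W(s) \\
 &\quad+ \int_0^t\int_{\tt E}\big(\eta(x,u_1;z)-\eta(x,u_2;z)\big)\widetilde N({\rm d}z,{\rm d}s),
\end{align*}
with $w(0)=0$. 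Because ${\tt A}$ is only monotone---not strictly monotone---and is Lipschitz in its second argument with a constant that \emph{grows like} $|\grad u_2|^{p-1}$ (see \ref{A2}), testing against $w$ itself cannot produce any coercive control on $\grad w$, so an $L^2$ estimate will not close. I therefore work in $L^1$: fix a smooth, even, convex regularisation $\beta_\delta\in C^2(\R)$ of $r\mapsto|r|$ with $\beta_\delta'\to\mathrm{sgn}$, $0\le\beta_\delta''$ supported in $\{|r|\le\delta\}$ and $\|\beta_\delta''\|_\infty\le C/\delta$, and apply the It\^o formula for c\`adl\`ag semimartingales to the functional $v\mapsto\int_D\beta_\delta(v)\,{\rm d}x$ along $w$.

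The first and principal difficulty is to justify this It\^o formula, since a priori the drift $\Div({\tt A}+\vec F)$ lives only in $W^{-1,p^\prime}$ while $w$ is merely weakly c\`adl\`ag in $L^2$; I would obtain it from the It\^o formula used in the existence proof, adapted to the Gelfand triple $W_0^{1,p}\hookrightarrow L^2\hookrightarrow W^{-1,p^\prime}$ with jumps, or by mollifying in space and passing to the limit in the regularised identity. Granting this, the deterministic drift becomes, after integration by parts,
\begin{align*}
 &-\int_D\big({\tt A}(x,u_1,\grad u_1)-{\tt A}(x,u_2,\grad u_2)\big)\cdot\grad w\,\beta_\delta''(w)\,{\rm d}x \\
 &-\int_D\big(\vec F(u_1)-\vec F(u_2)\big)\cdot\grad w\,\beta_\delta''(w)\,{\rm d}x.
\end{align*}
Splitting ${\tt A}(x,u_1,\grad u_1)-{\tt A}(x,u_2,\grad u_2)=\big[{\tt A}(x,u_1,\grad u_1)-{\tt A}(x,u_1,\grad u_2)\big]+\big[{\tt A}(x,u_1,\grad u_2)-{\tt A}(x,u_2,\grad u_2)\big]$, the first bracket dotted with $\grad w=\grad u_1-\grad u_2$ is nonnegative by the monotonicity in \ref{A2}, so with $\beta_\delta''\ge0$ it carries the favourable sign and may be discarded. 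The residual bracket is controlled by the Lipschitz bound in \ref{A2} as $(C_4|\grad u_2|^{p-1}+K_3)|w|\,|\grad w|\,\beta_\delta''(w)$; since $\beta_\delta''$ localises to $\{|w|\le\delta\}$ we have $|w|\beta_\delta''(w)\le C$ there, while $(C_4|\grad u_2|^{p-1}+K_3)|\grad w|\in L^1(D)$ by H\"older's inequality (using $\grad u_i\in L^p$ and $K_3\in L^{p^\prime}$), and $\grad w=0$ a.e. on $\{w=0\}$, so dominated convergence forces this term to $0$ as $\delta\to0$. The $\vec F$-term is treated identically via \ref{A3}.

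For the stochastic parts, the It\^o (Wiener) correction $\tfrac12\int_D\beta_\delta''(w)\|{\tt h}(u_1)-{\tt h}(u_2)\|_{\mathcal L_2(L^2)}^2\,{\rm d}x$ is dominated, by \ref{A4}, by $\tfrac{L_\sigma}{2}\int_{\{|w|\le\delta\}}\beta_\delta''(w)|w|^2\,{\rm d}x\le C\delta|D|\to0$. The compensated jump integral is a mean-zero martingale, and its compensator correction equals
\begin{align*}
\int_0^t\int_{\tt E}\int_D\big[\beta_\delta(w+\Delta\eta)-\beta_\delta(w)-\beta_\delta'(w)\,\Delta\eta\big]\,{\rm d}x\,m({\rm d}z)\,{\rm d}s,
\end{align*}
with $\Delta\eta:=\eta(x,u_1;z)-\eta(x,u_2;z)$; here the hypothesis $\lambda^*<1$ in \ref{A6} is decisive. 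Indeed, $|\Delta\eta|\le\gamma(z)\lambda^*|w|\le\lambda^*|w|$ forces $\beta_\delta''(w+\theta\Delta\eta)$ to be supported in $\{|w|\le\delta/(1-\lambda^*)\}$, on which a Taylor expansion together with $\int_{\tt E}\gamma^2\,{\rm d}m=c_\gamma<\infty$ yields an $O(\delta)$ bound, so this correction vanishes as $\delta\to0$ (morally, $w+\Delta\eta$ keeps the sign of $w$, so the jump leaves $\|w\|_{L^1}$ untouched). Collecting everything, taking $\mathbb E$ (which annihilates the martingale terms) and letting $\delta\to0$ gives $\mathbb E\,\|w(t)\|_{L^1}\le\mathbb E\,\|w(0)\|_{L^1}=0$ for every $t\in[0,T]$, whence $u_1=u_2$ a.e. in $D_T$, $\mathbb P$-a.s. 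The genuine work is the rigorous It\^o formula and the three $\delta\to0$ limits; the rest is bookkeeping.
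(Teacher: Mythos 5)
Your proposal is correct and follows essentially the same route as the paper: the standard $L^1$-contraction argument with a smooth convex regularisation of $|r|$, the same splitting of the ${\tt A}$-difference into a monotone part (discarded by sign) and a residual part controlled by the Lipschitz hypothesis \ref{A2}(iv) together with the $O(1/\delta)$ bound on the second derivative localised to $\{|w|\le\delta\}$, and the same treatment of the Wiener and jump corrections (the paper delegates the jump estimate to \cite{Majee2020}, whereas you spell out the role of $\lambda^*<1$). No substantive differences to report.
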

 We use Theorems \ref{thm:existence-weak}  and \ref{thm:pathwise-uniqueness} to prove existence of a unique strong solution of \eqref{eq:nonlinear}.
 \begin{thm}[Existence of strong solution]\label{thm:existence-strong}
 Under the assumptions \ref{A1}-\ref{A6}, there exists a path-wise unique strong solution of  \eqref{eq:nonlinear}. 
 \end{thm}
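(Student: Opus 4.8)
The plan is to upgrade the weak (martingale) solution provided by Theorem \ref{thm:existence-weak} to a strong solution on the prescribed stochastic basis by combining it with the pathwise uniqueness of Theorem \ref{thm:pathwise-uniqueness}, in the spirit of the Yamada--Watanabe principle and realised concretely through the Gy\"{o}ngy--Krylov characterisation of convergence in probability. The starting point is the approximating sequence $\tilde u_\kappa$ (the piecewise-affine interpolants of the time-discrete scheme), which is built as an explicit functional of the \emph{given} data $(W,N,U)$ on the original space $(\Omega,\mathcal F,\mathbb P,\{\mathcal F_t\})$; no change of probability space is needed merely to define it. The key point is to show that $\tilde u_\kappa$ converges \emph{in probability}, on the original space and in a suitable topology, because the limit of a convergent-in-probability sequence of $\{\mathcal F_t\}$-adapted processes is again adapted, and adaptedness to the prescribed filtration is exactly what distinguishes a strong from a merely martingale solution.

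First I would fix two arbitrary subsequences and form the joint laws $\mathcal L(\tilde u_\kappa,\tilde u_\lambda,W,N,U)$ on a product space $\mathcal Z\times\mathcal Z\times\mathcal Y$, where $\mathcal Z$ is the path space (carrying the weak topologies) already used in the proof of Theorem \ref{thm:existence-weak} and $\mathcal Y$ collects the noise and control spaces. The uniform \emph{a-priori} bounds and the Aldous condition established for the weak-existence result apply verbatim to each coordinate, so this family of joint laws is tight. Applying Jakubowski's version of the Skorokhod theorem to an arbitrary subsequence then produces, on a new space $(\hat\Omega,\hat{\mathcal F},\hat{\mathbb P})$, random elements $(\hat u^1,\hat u^2,\hat W,\hat N,\hat U)$ with almost sure convergence along the subsequence, and the Minty-type monotonicity identification used in {\bf step} (ii) of Subsection \ref{subsec:existence}, carried out simultaneously in each of the two $\mathcal Z$-coordinates, shows that \emph{both} $\hat u^1$ and $\hat u^2$ are weak solutions of \eqref{eq:nonlinear} driven by the \emph{same} $\hat W,\hat N$ and with the \emph{same} control $\hat U$ on a common filtered space. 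Pathwise uniqueness (Theorem \ref{thm:pathwise-uniqueness}) then forces $\hat u^1=\hat u^2$ $\hat{\mathbb P}$-a.s., i.e. the limiting law of the pairs is supported on the diagonal. By the Gy\"{o}ngy--Krylov lemma this diagonal concentration is equivalent to convergence of $\tilde u_\kappa$ in probability on $(\Omega,\mathcal F,\mathbb P)$; the limit $u$ is then $\{\mathcal F_t\}$-predictable and, passing once more to the limit in the scheme (using a.s.\ convergence of a subsequence together with the already-proven estimates and the same monotonicity argument), satisfies \eqref{eq:defi-weak-form} with the original $W,N,U$. Hence $u$ is a strong solution; its uniqueness is immediate from Theorem \ref{thm:pathwise-uniqueness}, which also transfers the bound \eqref{esti:bound-weak-solun} to $u$.

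The main obstacle I anticipate is that the Gy\"{o}ngy--Krylov characterisation is classically formulated for Polish spaces, whereas the natural path space $\mathbb D([0,T];L_w^2)\cap L^p(0,T;W_0^{1,p})$ carries non-metrisable weak topologies (which is precisely why Jakubowski's theorem, rather than the classical Skorokhod theorem, was needed throughout). I would circumvent this by first proving convergence in probability in a genuine Polish space, namely $L^p(0,T;L^p(D))$, whose tightness follows from the compact embedding $W_0^{1,p}\hookrightarrow\hookrightarrow L^p$ together with the time-regularity (Aldous) estimates, applying Gy\"{o}ngy--Krylov there, and then bootstrapping the remaining weak-topology convergences from the uniform bounds. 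The delicate technical points are verifying that the two Skorokhod limits are bona fide solutions of the \emph{same} equation (the Minty identification must be run on a single enlarged space without breaking the joint coupling of the two coordinates) and that the diagonal concentration of the limiting law transfers correctly into convergence in probability on the original basis; both are, however, routine adaptations of the machinery already developed for Theorem \ref{thm:existence-weak}.
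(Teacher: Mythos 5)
Your argument is correct in outline, but it takes a genuinely different route from the paper. The paper's own proof is essentially two lines: having already established existence of a martingale solution (Theorem \ref{thm:existence-weak}) and path-wise uniqueness (Theorem \ref{thm:pathwise-uniqueness}), it invokes the infinite-dimensional Yamada--Watanabe theorem of Ondrej\'{a}t \cite[Theorem 2]{Ondrejat2004} to conclude directly that a probabilistically strong solution exists on the prescribed basis. You instead realise the same implication constructively via the Gy\"{o}ngy--Krylov characterisation applied to the approximating sequence $\tilde u_\kappa$ --- doubling the path space, coupling two subsequences through Jakubowski's theorem, identifying both limits as weak solutions driven by the same noise and control via the Minty argument, and using path-wise uniqueness to force diagonal concentration. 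This is the strategy of \cite{Vallet2019,Vallet2021} (cited in the introduction for exactly this purpose), so it is a legitimate alternative; what it buys is the stronger conclusion that the numerical scheme itself converges in probability on the original space, at the cost of re-running the tightness/Skorokhod/Minty machinery on the product space. What the paper's route buys is brevity: once the two theorems are in hand, nothing further needs to be verified. One small simplification to your plan: you do not need to manufacture tightness in $L^p(0,T;L^p)$ via the compact embedding $W_0^{1,p}\hookrightarrow\hookrightarrow L^p$, because the space $\mathcal{Z}_T$ used in the existence proof already contains the Polish component $L^2(0,T;L^2)$ with its strong (norm) topology, and the Gy\"{o}ngy--Krylov lemma can be applied there directly, with the remaining weak-topology convergences recovered from the uniform bounds exactly as you describe.
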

 \begin{rem}
 Though in view of Definition \ref{defi:strong-solun}, the strong solution (probabilistically strong) has same regularity as of weak solution of \eqref{eq:nonlinear}, one can view from \cite[Theorem $4.2.5$]{claud}) that, in case of Brownian noise only, $u$ is more regular. In particular 
 $u\in L^2(\Omega; C([0,T];L^2))\cap L^p(\Omega;L^p(0,T; W_0^{1,p}))$---this regularity is also reported in \cite{Vallet2021}. 
 \end{rem}
 By $\mathcal{U}_{\rm ad}^w(u_0;T)$, we denote the set of weak admissible solutions to \eqref{eq:nonlinear}.
\begin{defi}\label{defi:optimal-control}
Let ${\tt \Psi}$ be a given Lipschitz continuous function on $L^2$, ${\tt u}_{\rm det} \in L^p(0,T; W_0^{1,p})$ and the assumptions of Theorem \ref{thm:existence-weak} holds true. We say that
 $\pi^*:=\big( \Omega^*, \mathcal{F}^*, \mathbb{P}^*,\{ \mathcal{F}_t^*\}, W^*, N^*, u^*, U^*\big)\in \mathcal{U}_{\rm ad}^w(u_0;T)$ is  a weak optimal solution of \eqref{eq:control-problem} if 
 \begin{align*}
  \mathcal{\pmb J}(\pi^*)= \inf_{\pi \in \mathcal{U}_{\rm ad}^w(u_0; T)} \mathcal{\pmb J}(\pi):=\Lambda\,.
 \end{align*}
\end{defi}

\begin{thm}\label{thm:existence-optimal-control}
 There exists a weak optimal solution $\pi^*$ of the control problem \eqref{eq:control-problem}.
\end{thm}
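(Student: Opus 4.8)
The plan is to run the direct method of the calculus of variations inside the stochastic framework, reusing verbatim the compactness machinery of Theorem \ref{thm:existence-weak}. Since $\mathcal{U}_{\rm ad}^w(u_0;T)\neq\emptyset$ by Theorem \ref{thm:existence-weak} and $\mathcal{\pmb J}\ge 0$, the value $\Lambda$ is a finite nonnegative number, and I would first fix a minimizing sequence $\pi_n=\big(\Omega_n,\mathcal{F}_n,\mathbb{P}_n,\{\mathcal{F}_t^n\},W_n,N_n,u_n,U_n\big)\in\mathcal{U}_{\rm ad}^w(u_0;T)$ with $\mathcal{\pmb J}(\pi_n)\to\Lambda$. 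Boundedness of $\mathcal{\pmb J}(\pi_n)$ forces $\sup_n\mathbb{E}_n\int_0^T\|U_n\|_{L^2}^2\,{\rm d}t<\infty$, whence the a-priori estimate \eqref{esti:bound-weak-solun} supplies a uniform bound for $u_n$ in $L^2(\Omega_n;\mathbb{D}([0,T];L_w^2))\cap L^p(\Omega_n;L^p(0,T;W_0^{1,p}))$.

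Next I would invoke the tightness arguments of Section \ref{sec:existence-weak-solu}: the Aldous condition (Definition \ref{defi:aldous-condition}) together with the uniform bounds yields tightness of the laws of $u_n$ on the relevant path space, of $W_n$ and $N_n$ on their canonical spaces, of the flux candidate ${\tt A}(\cdot,u_n,\grad u_n)$ in $(L^{p'}(0,T;L^{p'}),w)$, and, crucially, of $U_n$ in $(L^2(0,T;L^2),w)$ — only the weak topology being available, as $U_n$ is controlled solely in $L^2$. Jakubowski's version of the Skorokhod theorem on the product of these non-metric spaces then produces a new stochastic basis $(\Omega^*,\mathcal{F}^*,\mathbb{P}^*,\{\mathcal{F}_t^*\})$ and variables $(W^*,N^*,u^*,U^*,\chi^*)$ converging $\mathbb{P}^*$-a.s. in the respective topologies and sharing the finite-dimensional laws of the original sequence. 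A Minty-type monotonicity argument, identical to {\bf step} ${\rm (ii)}$ of Subsection \ref{subsec:existence}, identifies $\chi^*={\tt A}(\cdot,u^*,\grad u^*)$ and shows that $\pi^*=\big(\Omega^*,\mathcal{F}^*,\mathbb{P}^*,\{\mathcal{F}_t^*\},W^*,N^*,u^*,U^*\big)$ solves \eqref{eq:nonlinear}, i.e. $\pi^*\in\mathcal{U}_{\rm ad}^w(u_0;T)$.

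It then remains to establish the lower-semicontinuity inequality $\mathcal{\pmb J}(\pi^*)\le\liminf_n\mathcal{\pmb J}(\pi_n)=\Lambda$, which together with $\mathcal{\pmb J}(\pi^*)\ge\Lambda$ gives optimality in the sense of Definition \ref{defi:optimal-control}. Splitting $\mathcal{\pmb J}$ into its three pieces, the tracking term $\mathbb{E}^*\int_0^T\|u^*-{\tt u}_{\rm det}\|_{L^2}^2\,{\rm d}t$ passes to the limit via the a.s. strong convergence $u_n\to u^*$ in $L^2(0,T;L^2)$ (from the compact embedding $W_0^{1,p}\hookrightarrow\hookrightarrow L^2$) combined with Vitali's theorem, the requisite uniform integrability being furnished by the second- (or higher-) moment a-priori bounds. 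The control penalty $\mathbb{E}^*\int_0^T\|U^*\|_{L^2}^2\,{\rm d}t$ is handled precisely by the convexity of $U\mapsto\|U\|_{L^2}^2$, which renders the functional weakly lower-semicontinuous and hence $\le\liminf_n\mathbb{E}_n\int_0^T\|U_n\|_{L^2}^2\,{\rm d}t$, despite $U_n\rightharpoonup U^*$ converging only weakly.

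The main obstacle I anticipate is the terminal payoff $\mathbb{E}^*{\tt \Psi}(u^*(T))$. As ${\tt \Psi}$ is only Lipschitz on $L^2$ and not weakly continuous, the a.s. weak-in-$L^2$ convergence $u_n(T)\rightharpoonup u^*(T)$ delivered by the $\mathbb{D}([0,T];L_w^2)$-topology is insufficient, and strong convergence of the terminal values is needed. My plan is to extract $\mathbb{E}_n\|u_n(T)\|_{L^2}^2\to\mathbb{E}^*\|u^*(T)\|_{L^2}^2$ as a byproduct of the It\^o energy identity already exploited in the Minty step — weak lower-semicontinuity provides one inequality while the identification of the monotone limit provides the reverse — so that weak convergence upgrades to strong convergence $u_n(T)\to u^*(T)$ in $L^2(\Omega^*;L^2)$; the Lipschitz property of ${\tt \Psi}$ and uniform integrability then yield $\mathbb{E}_n{\tt \Psi}(u_n(T))\to\mathbb{E}^*{\tt \Psi}(u^*(T))$. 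A secondary technical point is checking that $U^*$ stays $\{\mathcal{F}_t^*\}$-predictable after the Skorokhod transfer, which follows from the same filtration-augmentation argument used for $u^*$, $W^*$ and $N^*$.
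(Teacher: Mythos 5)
Your proposal follows essentially the same route as the paper: a minimizing sequence with uniform bounds from the finiteness of $\Lambda$, tightness and Jakubowski's Skorokhod representation on $\mathcal{X}_T$, the Minty argument of Subsection \ref{subsec:existence} to identify the limit as an admissible weak solution, and then convexity/weak lower semicontinuity in the control variable to close the argument. The only difference is that you spell out the terminal-payoff step (upgrading $u_n(T)\rightharpoonup u^*(T)$ to strong $L^2$ convergence via the energy identity from the Minty step), which the paper handles by citing {\bf Step IV} of \cite[Theorem 2.2]{Majee2020}; your argument there is correct and is in fact the content of that cited step.
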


\subsection{Invariant measure}
Consider the uncontrolled SPDEs 
\begin{align} 
 {\rm d} u -\mbox{div}_x \big( {\tt A}(x, u, \grad u) + \vec{F}(u)\big)\,{\rm d}t =  {\tt h}(u)\,dW(t) + \int_{\tt E}\eta(x,u;z)\widetilde{N}({\rm d}z,{\rm d}t) ; \quad u(0,x)=u_0\,. \label{eq:nonlinear-no-control}
\end{align} 
i.e., \eqref{eq:nonlinear} with $U=0$.
Let $u(t,v)$ denote the path-wise unique strong solution of \eqref{eq:nonlinear-no-control}~(which exists due to Theorem  \ref{thm:existence-strong}) with fixed initial data $u_0=v\in L^2$. Define a family of Markov semi-group $\{P_t\}_{t\ge 0}$ as
 \begin{align}
(P_t \phi)(v):=\mathbb{E}\big[\phi(u(t,v))\big], \quad \phi \in\mathcal{B}(L^2)\,, \label{defi:semi-group}
\end{align}
 where $\mathcal{B}(L^2)$ denotes the space of all bounded Borel measurable functions on $L^2$. 
 
Now, we state the following main theorem regarding the existence of invariant measure.
\begin{thm}\label{thm:existence-invariant-measure}
Let the assumptions \ref{A1}-\ref{A6} hold true. In addition assume that there exists $\delta>0$ such that 
\begin{align}
C_{hg} + 2C_1 \|\nabla u\|_{L^p}^p-\|{\tt h}(u)\|_{\mathcal{L}_2(L^2)}^2 - \int_{\tt E} \|\eta(\cdot ,u;z) \|_{L^2}^2 m({\rm d}z) \ge \delta \|u\|_{L^2}^p\,,\label{cond:extra-sigma}
\end{align}
where $C_{hg} = 2 (C_5+ c_{\gamma} \|g\|_{L^\infty}^{2}) (1+|D|)$.
Then there exists an invariant measure $\mu \in \mathcal{P}(L^2)$, the set of all Borel probability measures on $L^2$, for $\{P_t\}$ i.e., 
$P_t^* \mu=\mu$, where $P_t^*$ denotes the adjoint semi-group acting on $\mathcal{P}(L^2)$ given  by 
$$P_{t}^{*} \nu (\Gamma) = \int_{L^2} {\tt P}_{t}(x, \Gamma)\, \nu({\rm d}x) \quad \text{with}~ {\tt P}_t(v, \Gamma) := \mathbb{P} \big(u(t,v) \in \Gamma \big)~\text{for any Borel set} \ \Gamma \subset L^2 .$$
In particular, for any $t\ge 0$, there holds
\begin{align*}
\int_{L^2} P_t\phi \,d\mu=\int_{L^2} \phi\,d\mu, \quad \forall ~\phi \in SC_b(L_w^2)\,,
\end{align*}
where  $SC_b(L_w^2)$ denotes the space of bounded, sequentially weakly continuous function on $L^2$; see Definition \ref{defi:se-weak-bounded}. 
\end{thm}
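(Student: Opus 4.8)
The plan is to apply the Maslowski--Seidler criterion, which guarantees existence of an invariant measure for a Markov semigroup once one verifies two properties: (a) boundedness in probability (a tightness-type condition on the laws of the solution at large times), and (b) the sequentially weakly Feller property of $\{P_t\}$ on $L^2$. Accordingly, I would structure the proof in two main parts corresponding to these two conditions, and then invoke the abstract theorem to conclude existence of $\mu \in \mathcal{P}(L^2)$ with $P_t^*\mu = \mu$.

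First I would establish boundedness in probability. The key is a uniform-in-time a-priori estimate for $\mathbb{E}\|u(t,v)\|_{L^2}^2$. Applying It\^o's formula to $\|u(t)\|_{L^2}^2$ for the uncontrolled equation~\eqref{eq:nonlinear-no-control} and taking expectations, the drift contributes the coercivity term $-2C_1\|\nabla u\|_{L^p}^p$ (up to lower-order terms from $K_1$ and $\vec F$), while the It\^o correction from the Gaussian and jump parts produces $\|{\tt h}(u)\|_{\mathcal{L}_2(L^2)}^2 + \int_{\tt E}\|\eta(\cdot,u;z)\|_{L^2}^2\,m({\rm d}z)$, with the constant $C_{hg}$ absorbing the inhomogeneous bounds from \ref{A4} and \ref{A6}. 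This is precisely where the extra dissipativity hypothesis~\eqref{cond:extra-sigma} enters: it yields
\begin{align*}
\frac{\rm d}{{\rm d}t}\mathbb{E}\|u(t)\|_{L^2}^2 \le C - \delta\,\mathbb{E}\|u(t)\|_{L^2}^p,
\end{align*}
and since $p>2$, a comparison/Gronwall argument gives a bound on $\sup_{t\ge 0}\mathbb{E}\|u(t,v)\|_{L^2}^2$ uniform in $t$. Chebyshev's inequality then converts this into tightness of the family $\{{\tt P}_t(v,\cdot)\}_{t\ge 0}$ in $(L^2,w)$, using that balls in $L^2$ are weakly compact; this is boundedness in probability in the weak topology, which is the correct setting given that the limit is sought in $SC_b(L_w^2)$.

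Next I would verify the sequentially weakly Feller property: for $\phi \in SC_b(L_w^2)$ and $v_n \rightharpoonup v$ weakly in $L^2$, I must show $(P_t\phi)(v_n) \to (P_t\phi)(v)$. This reduces, via dominated convergence and the definition~\eqref{defi:semi-group}, to showing that the solutions $u(t,v_n)$ converge to $u(t,v)$ in a sense compatible with weak continuity of $\phi$. Here I would invoke the continuous dependence of the (martingale) solution on the initial data recorded in Remark~\ref{rem:cont-dependence-initial-cond} together with uniqueness in law (Theorem~\ref{thm:pathwise-uniqueness} and Theorem~\ref{thm:existence-strong}): weak convergence of initial data propagates to convergence in law of the solutions in the Skorokhod space, and since $\phi$ is bounded and sequentially weakly continuous, one passes to the limit in the expectation.

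The main obstacle I expect is the sequentially weakly Feller step rather than the tightness estimate. The difficulty is that weak convergence $v_n \rightharpoonup v$ is a very weak hypothesis on the initial data, and one cannot expect strong convergence of the solution paths; the entire argument must be conducted with weak topologies and laws, so that the continuous-dependence result of Remark~\ref{rem:cont-dependence-initial-cond} has to be phrased and used at the level of convergence in distribution on $\mathbb{D}([0,T];L^2_w)$, with a Jakubowski--Skorokhod representation to transfer back to almost sure convergence on an auxiliary space, followed by identification of the limit via uniqueness in law. Care is also needed to ensure the uniform estimate is genuinely uniform in $t$ and not merely on $[0,T]$, since the invariant-measure construction uses the Krylov--Bogoliubov-type time averages $\frac{1}{T}\int_0^T {\tt P}_t(v,\cdot)\,{\rm d}t$ whose tightness hinges on that uniformity.
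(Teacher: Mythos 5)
Your proposal is correct and follows the same strategy as the paper: verify the Maslowski--Seidler criterion by proving (a) boundedness in probability via an It\^{o} estimate exploiting the extra dissipativity hypothesis~\eqref{cond:extra-sigma}, and (b) the sequentially weakly Feller property via the continuous-dependence result of Remark~\ref{rem:cont-dependence-initial-cond}, the Jakubowski--Skorokhod representation, and uniqueness in law. The one place where your route differs is in how the moment bound is extracted: you pass to the differential inequality $\frac{\rm d}{{\rm d}t}\mathbb{E}\|u(t)\|_{L^2}^2 \le C - \delta\,\mathbb{E}\|u(t)\|_{L^2}^p$ and use a comparison argument (together with Jensen, since $p>2$) to obtain a bound on $\sup_{t\ge 0}\mathbb{E}\|u(t)\|_{L^2}^2$, whereas the paper simply integrates the It\^{o} identity over $[0,T]$, divides by $T$, and obtains the time-averaged bound $\frac{1}{T}\int_0^T \mathbb{E}\|u(t)\|_{L^2}^p\,{\rm d}t \le \frac{1}{\delta}\bigl(2\|K_1\|_{L^1}+C_{hg}+\frac{1}{T}\|u_0\|_{L^2}^2\bigr)$, then applies Markov's inequality with exponent $p$. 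Your version yields a strictly stronger (uniform-in-time) estimate at the cost of justifying the differential form and the Jensen step; the paper's version is more direct and already in the exact time-averaged form required by the Krylov--Bogoliubov construction, which you correctly flag as the form ultimately needed. Both are valid, and your treatment of the weakly Feller step --- subsequential a.s.\ convergence on $\mathcal{Z}_T$ after a change of probability space, identification of the limit by uniqueness in law, then dominated convergence --- is exactly the paper's argument.
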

\section{Well-posedness of solution}\label{sec:existence-weak-solu}
In this section, we wish to show  well-posedness of a strong solution for \eqref{eq:nonlinear}. We first show existence of a martingale solution and then prove its path-wise uniqueness result.
\subsection{Existence of weak solution:}
In this subsection, we prove existence of a martingale solution for \eqref{eq:nonlinear}. We use semi-implicit time discretization scheme to construct  an approximate solutions and then derive its necessary a-priori bounds.

\subsubsection{\bf Discretization and projection in time.} For fixed $T>0$, consider a uniform partition $\{0=t_0< t_1< \cdots < t_N=T\} $ of $[0,T]$ with mesh size $\kappa:=\frac{T}{N}, N\in \mathbb{N}^*$. By  \cite[Lemma $30$]{Vallet2019}, there exists $\{ u_{0,\kappa}\}_{\kappa>0}\subset W_0^{1,p}$ such that $u_{0,\kappa} \goto u_0$ in $L^2$ as $\kappa \goto 0$ and 
\begin{align}
 \frac{1}{2}\|u_{0,\kappa} \|_{L^2}^2 + \kappa \, \|\nabla u_{0,\kappa} \|_{L^p}^p \le \frac{1}{2}\|u_0\|_{L^2}^2\,. \label{esti:initial-approx}
\end{align}
The evaluation of the control $U$ at time $t_i$ does not make sense. Therefore, it is require to define a projection operator in time. Define the set 
\begin{align*}
 \mathcal{P}_\kappa:= \Big\{ \phi_\kappa: (0,T)\goto L^2: \,\, \phi_\kappa \big|_{(t_j, t_{j+1}]} \,\,\text{is a constant in}\, L^2\Big\}. 
\end{align*}
For any $f\in L^2(0,T;L^2)$, we define the projection $\Pi_\kappa f \in \mathcal{P}_\kappa$ via 
\begin{align*}
 \int_0^T \big( \Pi_\kappa f-f, \phi_\kappa \big)_{L^2}\,{\rm d}t = 0 \quad \forall \phi_\kappa \in \mathcal{P}_\kappa.
\end{align*}
Then \cite[Lemma $4.4$]{Xin} gives
\begin{align}\label{eq:projection_f+projection_convergence}
\|\Pi_\kappa f\|_{L^2(0,T;L^2)} \le \|f\|_{L^2(0,T;L^2)}, \quad 
 \Pi_\kappa f \rightarrow f \quad \text{in}\,\,\, L^2(0,T;L^2).
 \end{align}
For $0\le i\le N$, set $U_i=\Pi_{\kappa}U (t_i)$. 
By using \eqref{eq:projection_f+projection_convergence}, we have
\begin{align}
  \mathbb{E}\Big[  \kappa \sum_{i=0}^{j-1} \|U_{i+1}\|_{L^2}^2\Big] \le\mathbb{E}\Big[\int_0^T \|\Pi_{\kappa} U\|_{L^2}^2\,{\rm d}s\Big]  \le
  \mathbb{E}\Big[\int_0^T \|U\|_{L^2}^2\,{\rm d}s\Big]\le  C\,. \label{inq:projection-control}
  \end{align}   
  \subsubsection{\bf Semi-discrete scheme and its solvability} 
We consider the following time discretization scheme: for $\hat{u}_0= u_{0,\kappa}$, find $\mathcal{F}_{t_{k+1}}$-measurable $W_0^{1,p}$-valued random variable
 $\hat{u}_{k+1},~k=0,1,\ldots, N-1$ such that $\mathbb{P}$-a.s., 
  \begin{align}
  &  \hat{u}_{k+1}- \hat{u}_k -\kappa\, \mbox{div}_x\big\{ {\tt A}(x, \hat{u}_{k+1},\nabla \hat{u}_{k+1}) + \vec{F}(\hat{u}_{k+1}) \big\} \notag \\
  & \hspace{2cm} = \kappa U_k + {\tt h}(\hat{u}_k)\Delta_k W +  \int_{\tt E}\int_{t_k}^{t_{k+1}}\eta(x,\hat{u}_k;z) \widetilde{N}({\rm d}z,{\rm d}t)\,,\label{eq:discrete-nonlinear}
  \end{align}
  holds in  $W_0^{-1,p^\prime}$, where $\Delta_k W=W(t_{k+1})-W(t_k)$. 
  Solvability of \eqref{eq:discrete-nonlinear} is guaranteed in the following proposition.
  \begin{prop}
   There exists a unique $W_0^{1,p}$-valued $\mathcal{F}_{t_{k+1}}$-measurable random variable
 $\hat{u}_{k+1},~k=0,1,\ldots, N-1$ satisfying the variational formula: for $\mathbb{P}$-a.s., 
  \begin{align}
    &\int_{D} \Big( \big\{\hat{u}_{k+1}- \hat{u}_k\big\}v + \kappa\big({\tt A}(x, \hat{u}_{k+1},\nabla \hat{u}_{k+1}) + \vec{F}(\hat{u}_{k+1})\big)\cdot \grad v \Big)\,{\rm d}x -\kappa \int_D U_k v\,{\rm d}x \notag \\
    &= \int_D \Big(\int_{t_k}^{t_{k+1}} {\tt h}(\hat{u}_k)\, dW(s)\Big) v\,{\rm d}x +  \int_{D} \int_{t_k}^{t_{k+1}} \int_{\tt E} \eta(x,\hat{u}_k;z)\,v\, \widetilde{N}({\rm d}z,{\rm d}s)\,{\rm d}x \quad \forall\,v\in W_0^{1,p}\,. \label{variational_formula_discrete}
   \end{align}
  \end{prop}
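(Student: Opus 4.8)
The plan is to fix $\omega\in\Omega$ and to read \eqref{variational_formula_discrete} as a single deterministic stationary (elliptic) problem in $W_0^{1,p}$, and then to invoke the surjectivity theory for pseudo-monotone operators. First I would collect the data into
\[
\mathcal{R}_k := \hat{u}_k + \kappa U_k + \int_{t_k}^{t_{k+1}} {\tt h}(\hat{u}_k)\,dW(s) + \int_{t_k}^{t_{k+1}}\!\!\int_{\tt E}\eta(\cdot,\hat{u}_k;z)\,\widetilde{N}({\rm d}z,{\rm d}s),
\]
which, thanks to \ref{A4}, \ref{A6} and $\hat{u}_k\in L^2$, defines $\bar{\mathbb{P}}$-a.s. an element of $L^2\hookrightarrow W^{-1,p^\prime}$ (the embedding holding since $p>2$ forces $W_0^{1,p}\hookrightarrow L^2$). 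Defining $\mathcal{A}:W_0^{1,p}\to W^{-1,p^\prime}$ by $\langle\mathcal{A}(u),v\rangle:=(u,v)_{L^2}+\kappa\int_D\big({\tt A}(x,u,\nabla u)+\vec{F}(u)\big)\cdot\nabla v\,{\rm d}x$, the identity \eqref{variational_formula_discrete} becomes exactly $\mathcal{A}(\hat{u}_{k+1})=\mathcal{R}_k$ in $W^{-1,p^\prime}$, and the goal is to show $\mathcal{A}$ is bounded, demicontinuous, coercive and pseudo-monotone, so that Browder's surjectivity theorem applies.

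Boundedness follows from the growth bound \ref{A2}(iii) together with $|\vec{F}(u)|\le C|u|$ (a consequence of \ref{A3}) and $W_0^{1,p}\hookrightarrow L^p\hookrightarrow L^{p^\prime}$ on the bounded domain $D$. For coercivity I would test with $u$ and use \ref{A2}(ii) to extract $\kappa C_1\|\nabla u\|_{L^p}^p$; the convection term $\kappa\int_D\vec{F}(u)\cdot\nabla u\,{\rm d}x$ is bounded by $\kappa C\|u\|_{L^{p^\prime}}\|\nabla u\|_{L^p}$ and, since $p>2$, absorbed by Young's inequality into a small multiple of $\kappa\|\nabla u\|_{L^p}^p$ plus a constant, giving $\langle\mathcal{A}(u),u\rangle/\|u\|_{W_0^{1,p}}\to\infty$. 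The crucial point is pseudo-monotonicity: $\mathcal{A}$ is of Leray--Lions form with principal part $\kappa({\tt A}(x,u,\nabla u)+\vec{F}(u))$ — which is monotone in the gradient slot by \ref{A2}(i), as $\vec{F}(u)$ is independent of $\nabla u$ — and zeroth-order part $u$, so the non-monotone dependence on $u$ enters only through lower-order terms. Exploiting the compact embedding $W_0^{1,p}\hookrightarrow\hookrightarrow L^p$ to pass to the limit in those terms, combined with a Minty-type argument on the principal part, yields pseudo-monotonicity in the classical fashion, and Browder's theorem produces a solution $\hat{u}_{k+1}\in W_0^{1,p}$.

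For measurability, note that $\mathcal{R}_k$ is $\mathcal{F}_{t_{k+1}}$-measurable, being built from the $\mathcal{F}_{t_k}$-measurable $\hat{u}_k$ and the stochastic integrals over $(t_k,t_{k+1}]$. Once uniqueness is in hand, the solution map $\mathcal{R}_k\mapsto\hat{u}_{k+1}$ is single-valued, and I would check it is sequentially continuous from $L^2$ into $(W_0^{1,p},w)$ by applying the same a-priori bounds and Minty argument to a convergent sequence of data; composing with the measurable $\mathcal{R}_k$ then gives the $\mathcal{F}_{t_{k+1}}$-measurability of $\hat{u}_{k+1}$. (Even absent uniqueness, a measurable selection from the nonempty closed solution set is available via the Kuratowski--Ryll-Nardzewski theorem.)

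Uniqueness is where I expect the real obstacle. Testing the difference of the two identities with $w:=u_1-u_2$ yields $\|w\|_{L^2}^2+\kappa\int_D({\tt A}(x,u_1,\nabla u_1)-{\tt A}(x,u_2,\nabla u_2))\cdot\nabla w\,{\rm d}x+\kappa\int_D(\vec{F}(u_1)-\vec{F}(u_2))\cdot\nabla w\,{\rm d}x=0$. Splitting the second integral via \ref{A2}(i) and \ref{A2}(iv), the genuinely monotone piece $({\tt A}(x,u_1,\nabla u_1)-{\tt A}(x,u_1,\nabla u_2))\cdot\nabla w\ge0$ may be discarded, the convection and $K_3$ contributions are absorbed into $\|w\|_{L^2}^2$ and a small multiple of $\|\nabla w\|_{L^p}^p$ by Young's inequality, but the gradient-weighted Lipschitz remainder $\kappa\int_D C_4|\nabla u_2|^{p-1}|w||\nabla w|\,{\rm d}x$ is the genuine difficulty: it is not controlled by $\|\nabla w\|_{L^p}$ and $\|w\|_{L^2}$ alone (a naive Hölder estimate would require an $L^\infty$ bound on $w$). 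Overcoming this is the heart of the matter; I would treat it by the $L^1$-contraction (doubling-of-variables) technique tailored to precisely this convection--diffusion structure — the same mechanism underpinning Theorem \ref{thm:pathwise-uniqueness} — exploiting the smallness of $\kappa=T/N$ together with the coercive $\|w\|_{L^2}^2$ term.
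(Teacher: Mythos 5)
Your proposal follows essentially the same route as the paper: both recast \eqref{variational_formula_discrete} as $\mathcal{A}(\hat u_{k+1})=X_k$ for a bounded, coercive, pseudomonotone operator $\mathcal{A}$ on $W_0^{1,p}$, invoke the Brezis/Browder surjectivity theorem, check $X_k\in L^2$ $\mathbb{P}$-a.s.\ via the It\^{o}--L\'{e}vy isometry, and deduce $\mathcal{F}_{t_{k+1}}$-measurability from measurability of the single-valued (demi)continuous inverse together with induction --- the paper simply outsources pseudomonotonicity, injectivity and demicontinuity of $\mathcal{A}^{-1}$ to \cite[Lemma 2.1]{Vallet2021}. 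You correctly diagnose the obstruction to a naive $L^2$-uniqueness argument coming from the term $C_4|\nabla u_2|^{p-1}|u_1-u_2|$ in \ref{A2}(iv) and name the right cure (the $L^1$-contraction device, i.e.\ testing with $\Upsilon_\vartheta'(u_1-u_2)$, whose second-derivative factor localizes to $\{|u_1-u_2|\le\vartheta\}$ and annihilates that term as $\vartheta\to0$); note only that no smallness of $\kappa$ is needed there, and that your operator, which includes $\vec F$, is the one actually matching \eqref{variational_formula_discrete} (the paper's displayed $\mathcal{A}$ omits $\vec F$, evidently a typo).
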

  \begin{proof}
  We follow the idea of \cite{Vallet2021}. Define an operator $\mathcal{A}: W_0^{1,p}\goto W^{-1,p^\prime}$ via
   \begin{align*}
  \big\langle \mathcal{A}u, v \big\rangle := \int_{D}\Big( uv + \kappa {\tt A}(x, u,\nabla u)\cdot \nabla v \Big)\,{\rm d}x
  \quad  \forall\,u,v \in W_0^{1,p}\,.
  \end{align*}
  A similar argument as done in \cite[pp. $269$]{Vallet2021} reveals that $\mathcal{A}$ is a pseudomonotone, coercive, and bounded operator and hence by Brezis' theorem $\mathcal{A}$ is onto $W^{-1,p^\prime}(D)$, see \cite[Theorem $2.6$]{Roubicek}. Moreover, $\mathcal{A}$ is injective and  $\mathcal{A}^{-1}:W^{-1,p^\prime} \goto W_0^{1,p}$ is demi-continuous; see \cite[Lemma $2.1$]{Vallet2021}. Since $W_0^{1,p}$ is separable, by Dunford-Pettis theorem,
  $\mathcal{A}^{-1}:W^{-1,p^\prime} \goto W_0^{1,p}$ is continuous. Setting
 $$ \quad  \displaystyle X_k:= \hat{u}_k + \kappa U_k + \int_{t_k}^{t_{k+1}}{\tt h}(\hat{u}_k)\,dW(s)+ \int_{\tt E}\int_{t_k}^{t_{k+1}}\eta(x,\hat{u}_k;z) \widetilde{N}({\rm d}z,{\rm d}t),$$
 and applying It\^{o}-L\'{e}vy isometry together with the assumptions \ref{A4},
\ref{A6}, we get 
\begin{align*}
 \mathbb{E}\big[ \|X_k\|_{L^2}^2\big] & \le 4\mathbb{E}\Big\{ \|\hat{u}_k\|_{L^2}^2 + \kappa^2 \|U_k\|_{L^2}^2 +  \kappa C_5(|D| +\|\hat{u}_k\|_{L^2}^2)+ 2 \kappa\, \|g\|_{L^\infty}^{2}  \big( |D| + \|\hat{u}_{k}\|_{L^2}^2\big)c_\gamma
 \Big\}\\
 & \le C(\kappa, |D|, C_5,\|g\|_{L^\infty}, U) \Big( 1 + \mathbb{E}\big[\|\hat{u}_k\|_{L^2}^2\big]\Big).
\end{align*}
Therefore $\mathbb{P}$-a.s., $X_k \in L^2$, and hence we have $\hat{u}_{k+1}= \mathcal{A}^{-1}X_k$. Thanks to continuity of $\mathcal{A}^{-1}:W^{-1,p^\prime} \goto W_0^{1,p}$, and mathematical induction, we arrive at the required assertion.
\end{proof}
\subsubsection{\bf {\it A-priori} estimates}
We derive necessary uniform bounds for $\hat{u}_{k+1}$ (uniform in $k$ and $\kappa$). 
\begin{lem}\label{lem:a-priori:1}
Let the assumptions \ref{A1}-\ref{A6} hold true, and $\hat{u}_{k+1}$ be a solution of \eqref{eq:discrete-nonlinear} for $0\le k\le N-1$.
Then there exists a constant $C>0$ such that
\begin{align}
 \sup_{1\le n\le N}\mathbb{E}\Big[\| \hat{u}_{n}\|_{L^2}^2\Big] +  \sum_{k=0}^{N-1} \mathbb{E}\Big[\| \hat{u}_{k+1}-\hat{u}_k\|_{L^2}^2\Big] +
  \kappa \sum_{k=0}^{N-1} \mathbb{E}\Big[\|\grad \hat{u}_{k+1}\|_{L^p}^p\Big]  \le  C\,. \label{a-prioriestimate:1}
\end{align}
\end{lem}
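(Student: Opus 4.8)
The plan is to carry out a discrete energy estimate by testing the variational identity \eqref{variational_formula_discrete} with $v=\hat{u}_{k+1}$ and then summing over $k$. First I would use the elementary identity $(a-b,a)_{L^2}=\tfrac12\|a\|_{L^2}^2-\tfrac12\|b\|_{L^2}^2+\tfrac12\|a-b\|_{L^2}^2$ with $a=\hat{u}_{k+1}$, $b=\hat{u}_k$ to rewrite the linear term, which produces the telescoping quantity $\tfrac12\|\hat{u}_{k+1}\|_{L^2}^2-\tfrac12\|\hat{u}_k\|_{L^2}^2$ together with the increment $\tfrac12\|\hat{u}_{k+1}-\hat{u}_k\|_{L^2}^2$ that must survive on the left. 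Coercivity \ref{A2}(ii) bounds the diffusion term from below by $C_1\kappa\|\nabla\hat{u}_{k+1}\|_{L^p}^p-\kappa\|K_1\|_{L^1}$, while the convective contribution $\kappa\int_D\vec{F}(\hat{u}_{k+1})\cdot\nabla\hat{u}_{k+1}\,{\rm d}x$ vanishes: writing $\vec{F}=\vec{\Phi}^{\,\prime}$ with $\vec{\Phi}(0)=\vec{0}$ (possible since $\vec{F}(0)=\vec{0}$ by \ref{A3}), this integral equals $\kappa\int_D\Div\vec{\Phi}(\hat{u}_{k+1})\,{\rm d}x=0$ because $\hat{u}_{k+1}\in W_0^{1,p}$ has zero trace. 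The control term is handled by Young's inequality, $\kappa(U_k,\hat{u}_{k+1})_{L^2}\le\tfrac{\kappa}{2}\|U_k\|_{L^2}^2+\tfrac{\kappa}{2}\|\hat{u}_{k+1}\|_{L^2}^2$, the first summand being controlled after summation by \eqref{inq:projection-control}.

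The crux is the treatment of the two stochastic terms, which are not adapted to $\mathcal{F}_{t_k}$ once paired with $\hat{u}_{k+1}$. Here I would split $\hat{u}_{k+1}=\hat{u}_k+(\hat{u}_{k+1}-\hat{u}_k)$. Pairing the noise increments with the $\mathcal{F}_{t_k}$-measurable part $\hat{u}_k$ and conditioning on $\mathcal{F}_{t_k}$, both the It\^o integral $\int_{t_k}^{t_{k+1}}{\tt h}(\hat{u}_k)\,dW$ and the compensated integral $\int_{t_k}^{t_{k+1}}\int_{\tt E}\eta(\cdot,\hat{u}_k;z)\widetilde{N}$ have vanishing conditional mean, so these cross terms drop out after taking expectation. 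Pairing them instead with the increment $\hat{u}_{k+1}-\hat{u}_k$, I would apply Young's inequality with a small parameter so that a fraction, say $\tfrac18\|\hat{u}_{k+1}-\hat{u}_k\|_{L^2}^2$ from each of the two terms, is absorbed into the $\tfrac12\|\hat{u}_{k+1}-\hat{u}_k\|_{L^2}^2$ already present on the left, leaving a positive multiple of the increment there. The remaining squared noise norms are estimated by the It\^o and It\^o--L\'evy isometries together with \ref{A4} and \ref{A6}, yielding $\mathbb{E}\big\|\int_{t_k}^{t_{k+1}}{\tt h}(\hat{u}_k)\,dW\big\|_{L^2}^2\le\kappa C_5(|D|+\mathbb{E}\|\hat{u}_k\|_{L^2}^2)$ and the analogous jump bound with constant $2\|g\|_{L^\infty}^2 c_\gamma$.

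After taking expectation and summing over $k=0,\dots,n-1$, the telescoping reduces the linear part to $\tfrac12\mathbb{E}\|\hat{u}_n\|_{L^2}^2-\tfrac12\|u_{0,\kappa}\|_{L^2}^2$, where the initial energy is controlled by \eqref{esti:initial-approx}, and the constant-in-$x$ contributions collect into terms bounded by $CT(|D|+\|K_1\|_{L^1})$ since $\kappa n\le T$. This leaves an inequality of the shape $\mathbb{E}\|\hat{u}_n\|_{L^2}^2+\sum_{k=0}^{n-1}\mathbb{E}\|\hat{u}_{k+1}-\hat{u}_k\|_{L^2}^2+\kappa\sum_{k=0}^{n-1}\mathbb{E}\|\nabla\hat{u}_{k+1}\|_{L^p}^p\le C+C\kappa\sum_{k=0}^{n}\mathbb{E}\|\hat{u}_k\|_{L^2}^2$, after absorbing the $\tfrac{\kappa}{2}\mathbb{E}\|\hat{u}_n\|_{L^2}^2$ term on the right into the left for $\kappa$ small. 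Finally I would apply the discrete Gronwall lemma to $n\mapsto\mathbb{E}\|\hat{u}_n\|_{L^2}^2$ to get $\sup_{1\le n\le N}\mathbb{E}\|\hat{u}_n\|_{L^2}^2\le Ce^{CT}$, and reinsert this uniform bound into the summed inequality to control the increment and gradient sums, giving \eqref{a-prioriestimate:1}. The main technical obstacle lies in the stochastic bookkeeping of the second paragraph---securing the martingale cancellation and choosing the Young parameters so that both the gradient term from coercivity and the full increment sum remain on the left after absorption.
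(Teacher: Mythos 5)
Your proposal is correct and follows essentially the same route as the paper's proof: testing \eqref{variational_formula_discrete} with $\hat{u}_{k+1}$, using the identity \eqref{eq:identity-0}, coercivity plus the vanishing of the $\vec{F}$-term via Gauss--Green and the zero trace, the splitting $\hat{u}_{k+1}=\hat{u}_k+(\hat{u}_{k+1}-\hat{u}_k)$ so that the martingale pairings with $\hat{u}_k$ vanish in expectation while the increment pairings are absorbed by Young's inequality, the It\^o--L\'evy isometry with \ref{A4} and \ref{A6}, and finally summation, telescoping, \eqref{inq:projection-control} and the discrete Gronwall lemma. No substantive differences to report.
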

\begin{proof}
We follow the ideas from \cite{Majee2020,Vallet2019,Vallet2021}. 
Taking $v=\hat{u}_{k+1}$ as a test function in \eqref{variational_formula_discrete}, and using Young's inequality, and the identity
\begin{align}
 ({\tt a}-{\tt b}){\tt a}= \frac{1}{2}\big[{\tt a}^2 + ({\tt a}-{\tt b})^2 -{\tt b}^2\big] \quad \forall\,{\tt a},{\tt b} \in \R, \label{eq:identity-0}
\end{align}
we get
\begin{align}
 &\frac{1}{2} \Big\{\| \hat{u}_{k+1}\|_{L^2}^2+ \| \hat{u}_{k+1}-\hat{u}_k||_{L^2}^2 - \|\hat{u}_k\|_{L^2}^2 \Big\}  
  + \kappa\,   \int_D \big({\tt A}(x, \hat{u}_{k+1},\nabla \hat{u}_{k+1}) + F(\hat{u}_{k+1})\big) \cdot \nabla \hat{u}_{k+1}\,{\rm d}x  \notag \\
 &  \le \frac{\kappa}{2}\|U_k\|_{L^2}^2 + \frac{\kappa}{2} \|\hat{u}_{k+1}\|_{L^2}^2 + 
  \int_{D} \Big(\int_{t_k}^{t_{k+1}} {\tt h}(\hat{u}_k)\, dW(s)\Big) \hat{u}_k\,{\rm d}x
  + \frac{1}{2\eps} \int_{D} \Big(\int_{t_k}^{t_{k+1}} {\tt h}(\hat{u}_k)\, dW(s)\Big)^2\,{\rm d}x\notag \\
& \quad + \int_{D} \int_{t_k}^{t_{k+1}} \int_{\tt E} \eta(x,\hat{u}_k;z) \hat{u}_k\, \widetilde{N}({\rm d}z,{\rm d}s)\,{\rm d}x
 + \frac{\eps}{4} \| \hat{u}_{k+1}- \hat{u}_k\|_{L^2}^2 \notag \\
  & \qquad  + \frac{1}{2\eps} \int_{D} \Big(\int_{t_k}^{t_{k+1}} \int_{\tt E} \eta(x,\hat{u}_k;z)\, \widetilde{N}({\rm d}z,{\rm d}s)\Big)^2\,{\rm d}x\,,\quad
  \text{for some}~~\eps >0 \,. \label{esti:1}
  \end{align}
  Observe that, in view of Gauss-Green's theorem and Assumption \ref{A2}, {\rm ii)},
\begin{align}
 \kappa \int_D \Big({\tt A}(x, \hat{u}_{k+1},\nabla \hat{u}_{k+1}) + F(\hat{u}_{k+1})\Big) \cdot \nabla \hat{u}_{k+1}\,{\rm d}x 
 \ge \kappa C_1 \|\nabla \hat{u}_{k+1}\|_{L^p}^p - \kappa \|K_1\|_{L^1}\,. \label{esti:2}
\end{align}
Combining \eqref{esti:1} and \eqref{esti:2} and using It\^{o}-L\'{e}vy isometry along with the assumptions \ref{A3}-\ref{A6}, we have, for $\eps=1$, 
\begin{align}
 &\frac{1}{2}  \mathbb{E}\big[\| \hat{u}_{k+1}\|_{L^2}^2- \|\hat{u}_k\|_{L^2}^2\big] 
 + \frac{1}{4}  \mathbb{E}\big[\| \hat{u}_{k+1}-\hat{u}_k\|_{L^2}^2 \big]  + \kappa C_1 \mathbb{E}\big[ \|\nabla \hat{u}_{k+1}\|_{L^p}^p\big] \notag \\
 & \le \kappa \|K_1\|_{L^1} +\kappa \mathbb{E}\big[\|U_k\|_{L^2}^2\big] + \frac{\kappa}{4} \mathbb{E}[\|\hat{u}_{k+1}\|_{L^2}^2] + \frac{1}{2}\mathbb{E}\Big[\int_{t_k}^{t_{k+1}} \|{\tt h}(\hat{u}_k)\|_{\mathcal{L}_2(L^2)}^2\,{\rm d}s\Big] \notag  \\
 & \hspace{2cm} +  \frac{1}{2}\mathbb{E}\Big[  \int_{D} \int_{t_k}^{t_{k+1}} \int_{\tt E} \eta^2(x, \hat{u}_k;z)\, m({\rm d}z)\,{\rm d}s\,{\rm d}x\Big] \notag \\
 & \le \kappa \Big(\|K_1\|_{L^1} + C_5 |D|+ \|g\|_{L^2}^2 \, c_{\gamma} \Big) + \kappa \mathbb{E}[\|U_k\|_{L^2}^2] + \frac{\kappa}{4} \mathbb{E}[\|\hat{u}_{k+1}\|_{L^2}^2]  \notag \\
 & \hspace{2cm}+
 \kappa \big(C_5+ \|g\|_{L^\infty}^2 \, c_{\gamma} \big)\mathbb{E}[\|\hat{u}_k\|_{L^2}^2]\,. \label{inq:apriori-1}
\end{align}
 Summing over all time steps $0\le k\le n-1$ \eqref {inq:apriori-1} and using the fact that 
 $\kappa \le 1$, we have 
 \begin{align*}
\frac{1}{2}\big(\mathbb{E}[\|\hat{u}_{n}\|_{L^2}^2] - \|\hat{u}_0\|_{L^2}^2\big) 
\le C + \kappa \sum_{k=0}^{n-1}\mathbb{E}[\|U_k\|_{L^2}^2] + \frac{1}{4} \mathbb{E}[\|\hat{u}_{n}\|_{L^2}^2] + \kappa C \sum_{k=0}^{n-1} \mathbb{E}[\|\hat{u}_{k}\|_{L^2}^2]\,.
 \end{align*}
One can arrive at the assertion \eqref{a-prioriestimate:1} by using  discrete Gronwall's lemma 
 together with \eqref{inq:projection-control}. 
\end{proof}

\subsubsection{ \bf Continuation and its bound:}
Using the discrete solutions $\{ \hat{u}_k\}$, we first define stochastic processes
which are defined on the time interval $[0,T]$ and then deduce uniform bounds.
Like in \cite{Majee2020,Vallet2019}, we
introduce the following processes:
\begin{equation*}
\begin{aligned}
 u_{\kappa}(t):&= \sum_{k=0}^{N-1} \hat{u}_{k+1} {\bf 1}_{[t_k,t_{k+1})}(t),~~~
 \bar{u}_{\kappa}(t):= \sum_{k=0}^{N-1} \hat{u}_{k} {\bf 1}_{(t_k,t_{k+1}]}(t), ~~t\in (0,T],
 u_\kappa (T)=\hat{u}_N,~~ \\
 U_{\kappa}(t):&= \sum_{k=0}^{N-1} U_{k+1} {\bf 1}_{[t_k,t_{k+1})}(t),~~ t\in [0,T), \quad U_\kappa (T)=U_N,~~~\bar{u}_{\kappa}(0)=\hat{u}_0\,, \\
 M_{\kappa}(t):&= \int_0^t {\tt h}(\bar{u}_\kappa(s))\,dW(s),\quad 
 J_{\kappa}(t):= \int_0^t \int_{\tt E} \eta(x, \bar{u}_{\kappa}(s);z)\widetilde{N}({\rm d}z,{\rm d}s),~~t\in [0,T]\,,
 \end{aligned}
\end{equation*}
and for $t\in [0,T)$,
\begin{equation}\label{eq:defi-piecewise-affine-function}
 \begin{aligned}
  \tilde{u}_{\kappa}(t):&= \sum_{k=0}^{N-1}\Big( \frac{\hat{u}_{k+1}-\hat{u}_k}{\kappa}(t-t_k) + \hat{u}_k\Big) {\bf 1}_{[t_k,t_{k+1})}(t), \quad 
    \tilde{u}_{\kappa}(T):= \hat{u}_N\,, \\
    \tilde{J}_{\kappa}(t):&= \sum_{k=0}^{N-1}\Big( \frac{J_{\kappa}(t_{k+1})-J_{\kappa}(t_k)}{\kappa}(t-t_k)
   + J_{\kappa}(t_k)\Big) {\bf 1}_{[t_k,t_{k+1})}(t),~~\tilde{J}_{\kappa}(T):= J_\kappa(T)\,, \\
      \tilde{M}_{\kappa}(t):&= \sum_{k=0}^{N-1}\Big( \frac{ M_{\kappa}(t_{k+1})-M_{\kappa}(t_k)}{\kappa}(t-t_k) + M_{\kappa}( t_k)\Big) {\bf 1}_{[t_k,t_{k+1})}(t),~~\tilde{M}_{\kappa}(T):= M_{\kappa}(T)\,.
 \end{aligned}
\end{equation}
In the following lemma, we derive {\em a-priori} estimates for $\tilde{u}_\kappa$ and $u_{\kappa}$.
\begin{lem}\label{lem:a-priori:2}
Under the assumptions \ref{A1}-\ref{A6}, there exists a constant $C>0$ (independent of $\kappa$) such that
\begin{itemize}
  \item [(i)] $   \displaystyle \underset{t\in [0,T]}\sup\, \mathbb{E}\big[\| \tilde{u}_{\kappa}(t)\|_{L^2}^2\big] \le C,\quad 
  \mathbb{E}\Big[ \int_0^T \|u_{\kappa}(t)-\tilde{u}_{\kappa}(t)\|_{L^2}^2\,{\rm d}t \Big] \le C \kappa$, 
\item[ii)] $  \displaystyle \mathbb{E}\Big[\int_0^T\|\nabla u_{\kappa}(t)\|_{L^p}^p\,{\rm d}t \Big] \le C, \quad 
  \mathbb{E}\Big[\int_0^T \|\nabla \tilde{u}_{\kappa}(t)\|_{L^p}^p\,{\rm d}t \Big] \le C$, 
 \item[(iii)] 
 $ \mathbb{E}\Big[  \underset{t\in [0,T]}\sup\,\| \tilde{u}_{\kappa}(t)\|_{L^2}^2\Big] \le C, \quad 
   \mathbb{E}\Big[ \big\|\tilde{u}_{\kappa}\big\|_{L^p(0,T; W_0^{1,p})}^p \Big] \le C$,
\end{itemize}
\end{lem}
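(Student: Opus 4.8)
The plan is to deduce all six bounds from the discrete estimate \eqref{a-prioriestimate:1} of Lemma \ref{lem:a-priori:1}, exploiting the structure of the interpolants: on each subinterval $[t_k,t_{k+1})$ the affine interpolant is the convex combination $\tilde{u}_{\kappa}(t)=\frac{t-t_k}{\kappa}\hat{u}_{k+1}+\big(1-\frac{t-t_k}{\kappa}\big)\hat{u}_k$, while $u_{\kappa}(t)\equiv\hat{u}_{k+1}$ is constant there. Parts (i) and (ii) are then essentially bookkeeping; the genuinely delicate point is the first bound in (iii), which upgrades $\sup_t\mathbb{E}$ to $\mathbb{E}\sup_t$ and will require maximal (Burkholder--Davis--Gundy type) inequalities for the stochastic integrals.

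For the first bound in (i) I would use convexity of $v\mapsto\|v\|_{L^2}^2$ to obtain, pointwise in $t$, $\|\tilde{u}_{\kappa}(t)\|_{L^2}^2\le\max\{\|\hat{u}_k\|_{L^2}^2,\|\hat{u}_{k+1}\|_{L^2}^2\}$, whence $\sup_t\mathbb{E}\|\tilde{u}_{\kappa}(t)\|_{L^2}^2\le\sup_n\mathbb{E}\|\hat{u}_n\|_{L^2}^2\le C$. For the second bound in (i) I would compute $u_{\kappa}(t)-\tilde{u}_{\kappa}(t)=\big(1-\frac{t-t_k}{\kappa}\big)(\hat{u}_{k+1}-\hat{u}_k)$ on $[t_k,t_{k+1})$, integrate the scalar factor via $\int_{t_k}^{t_{k+1}}\big(1-\frac{t-t_k}{\kappa}\big)^2\,{\rm d}t=\kappa/3$, and sum to reach $\mathbb{E}\int_0^T\|u_{\kappa}-\tilde{u}_{\kappa}\|_{L^2}^2\,{\rm d}t=\frac{\kappa}{3}\sum_k\mathbb{E}\|\hat{u}_{k+1}-\hat{u}_k\|_{L^2}^2\le C\kappa$. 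Part (ii) is analogous: for $u_{\kappa}$ the integral equals exactly $\kappa\sum_k\mathbb{E}\|\nabla\hat{u}_{k+1}\|_{L^p}^p\le C$, and for $\tilde{u}_{\kappa}$ convexity of $v\mapsto\|v\|_{L^p}^p$ gives $\int_0^T\|\nabla\tilde{u}_{\kappa}\|_{L^p}^p\,{\rm d}t\le\frac{\kappa}{2}\sum_k\big(\|\nabla\hat{u}_{k+1}\|_{L^p}^p+\|\nabla\hat{u}_k\|_{L^p}^p\big)$, where the second (shifted) sum is controlled by \eqref{a-prioriestimate:1} together with the initial bound $\kappa\|\nabla\hat{u}_0\|_{L^p}^p=\kappa\|\nabla u_{0,\kappa}\|_{L^p}^p\le\frac12\|u_0\|_{L^2}^2$ from \eqref{esti:initial-approx}.

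The main obstacle is the first bound in (iii). Since $\|\tilde{u}_{\kappa}(t)\|_{L^2}^2\le\max_{0\le n\le N}\|\hat{u}_n\|_{L^2}^2$ pointwise, it suffices to prove $\mathbb{E}\big[\max_n\|\hat{u}_n\|_{L^2}^2\big]\le C$. To this end I would return to \eqref{esti:1}--\eqref{esti:2} but \emph{postpone} taking expectations: summing over $k=0,\dots,n-1$ and taking $\sup_n$ before $\mathbb{E}$, the two linear stochastic contributions $\sum_k\big(\int_{t_k}^{t_{k+1}}{\tt h}(\hat{u}_k)\,{\rm d}W,\hat{u}_k\big)_{L^2}$ and $\sum_k\big(\int_{t_k}^{t_{k+1}}\!\int_{\tt E}\eta(\cdot,\hat{u}_k;z)\widetilde{N},\hat{u}_k\big)_{L^2}$ are exactly the values at $t_n$ of the real martingales $\int_0^t(\bar{u}_{\kappa},{\tt h}(\bar{u}_{\kappa})\,{\rm d}W)_{L^2}$ and $\int_0^t\!\int_{\tt E}(\bar{u}_{\kappa},\eta(\cdot,\bar{u}_{\kappa};z))_{L^2}\widetilde{N}({\rm d}z,{\rm d}s)$, because $\bar{u}_{\kappa}$ is constant $=\hat{u}_k$ on each subinterval. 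Applying the Burkholder--Davis--Gundy / Kunita maximal inequalities, bounding the integrands through \ref{A4} and \ref{A6} by $C(|D|+\|\bar{u}_{\kappa}\|_{L^2}^2)\|\bar{u}_{\kappa}\|_{L^2}^2$, factoring out $\sup_t\|\bar{u}_{\kappa}\|_{L^2}^2$, and then invoking Young's inequality with a small parameter, each stochastic term is dominated by $\delta\,\mathbb{E}\big[\sup_t\|\bar{u}_{\kappa}(t)\|_{L^2}^2\big]+C_\delta\,\mathbb{E}\int_0^T(|D|+\|\bar{u}_{\kappa}\|_{L^2}^2)\,{\rm d}s$ for any $\delta>0$; the last integral equals $C\big(|D|T+\kappa\sum_k\mathbb{E}\|\hat{u}_k\|_{L^2}^2\big)\le C$ by \eqref{a-prioriestimate:1}. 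The quadratic It\^o and Poisson corrections in \eqref{esti:1}, as well as the lower-order and control terms, are nondecreasing in $n$, so their suprema are attained at $n=N$ and are controlled by the It\^o--L\'evy isometry together with \eqref{inq:projection-control} and \eqref{a-prioriestimate:1}. Choosing $\delta$ small and using $\sup_t\|\bar{u}_{\kappa}\|_{L^2}^2=\max_{0\le k\le N-1}\|\hat{u}_k\|_{L^2}^2\le\max_n\|\hat{u}_n\|_{L^2}^2$, together with the a priori finiteness $\mathbb{E}\big[\max_n\|\hat{u}_n\|_{L^2}^2\big]\le\sum_{n}\mathbb{E}\|\hat{u}_n\|_{L^2}^2<\infty$, the $\delta$-terms are absorbed into the left-hand side, giving $\mathbb{E}\big[\max_n\|\hat{u}_n\|_{L^2}^2\big]\le C$ and hence the claim. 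Finally, the second bound in (iii) follows from the second bound in (ii) via the Poincar\'e inequality $\|\tilde{u}_{\kappa}(t)\|_{W_0^{1,p}}^p\le C\|\nabla\tilde{u}_{\kappa}(t)\|_{L^p}^p$ on the bounded domain $D$.
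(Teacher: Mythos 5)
Your proposal is correct and follows essentially the same route as the paper: parts (i) and (ii) by convexity and direct computation from the discrete estimate \eqref{a-prioriestimate:1} together with \eqref{esti:initial-approx}, and the first bound in (iii) by summing \eqref{esti:1}--\eqref{esti:2} before taking expectations, applying Burkholder--Davis--Gundy to the two martingale terms (whose integrands involve the piecewise-constant process $\bar{u}_{\kappa}$), and absorbing the $\delta$-term via Young's inequality, with the last bound in (iii) following from Poincar\'e. Your explicit remark on the a priori finiteness of $\mathbb{E}\big[\max_n\|\hat{u}_n\|_{L^2}^2\big]$ needed to justify the absorption is a detail the paper leaves implicit, but it is not a different argument.
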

\begin{proof}
A simple calculation reveals that 
\begin{align*}
\begin{cases}
  \displaystyle   \underset{t\in [0,T]}\sup\, \mathbb{E}\big[\| \tilde{u}_{\kappa}(t)\|_{L^2}^2\big] = \underset{t\in [0,T]}\sup\, \mathbb{E}\big[\| u_{\kappa}(t)\|_{L^2}^2\big] = \underset{0\le k\le N-1}\max\, \mathbb{E}\big[\|\hat{u}_{k+1}\|_{L^2}^2\big] \,, \\
   \displaystyle    \mathbb{E}\Big[ \int_0^T \|u_{\kappa}(t)-\tilde{u}_{\kappa}(t)\|_{L^2}^2\,{\rm d}t \Big] \le \kappa  \sum_{k=0}^{N-1} \mathbb{E}\Big[\| \hat{u}_{k+1}-\hat{u}_k\|_{L^2}^2\Big]\,, \\
  \displaystyle  \mathbb{E}\Big[\int_0^T\|\nabla u_{\kappa}(t)\|_{L^p}^p\,{\rm d}t \Big] \le \kappa \sum_{k=0}^{N-1} 
 \mathbb{E}\big[ \|\nabla \hat{u}_{k+1}\|_{L^p}^p \big] \, , \\
  \displaystyle  \mathbb{E}\Big[\int_0^T\|\nabla \tilde{u}_{\kappa}(t)\|_{L^p}^p\,{\rm d}t \Big] \le \frac{\kappa}{2} \sum_{k=0}^{N-1} \big( \mathbb{E}[\|\nabla \hat{u}_{k+1}\|_{L^p}^p ] +  \mathbb{E}[\|\nabla \hat{u}_{k}\|_{L^p}^p ] \big)\,.
 \end{cases}
\end{align*}
Hence the assertions ${\rm (i)}$ and  ${\rm (ii)}$ follows from \eqref{a-prioriestimate:1} and \eqref{esti:initial-approx}\,.
\vspace{.2cm}

\noindent{Proof of ${\rm (iii)}$.} We prove the first part of ${\rm (iii)}$.
We combine \eqref{esti:1} and \eqref{esti:2} for $\eps=1$, and then sum over $k=0,1,\ldots, n-1$ and take the maximum over 
$n=1,2,\ldots, N$. The resulting expression yields, after taking expectation,
\begin{align*}
& \mathbb{E}\Big[ \max_{1\le n \le N} \|\hat{u}_n\|_{L^2}^2\Big] \\
&\le \mathbb{E}\big[ \|\hat{u}_0\|_{L^2}^2\big] + 2\,\kappa\mathbb{E}\Big[ \sum_{k=0}^{N}\|U_k\|_{L^2}^2 \Big] + \kappa \sum_{k=0}^{N-1} \mathbb{E}\big[ \|\hat{u}_{k+1}\|_{L^2}^2\big] + 2T
\|K_1\|_{L^1} + \mathbb{E}\Big[ \sum_{k=0}^{N-1} \big\| \int_{t_k}^{t_{k+1}} {\tt h}(\hat{u}_k)\,dW(s)\big\|_{L^2}^2\Big] \notag \\
&  + 2\, \mathbb{E}\Big[  \max_{1 \le n \le N} \sum_{k=0}^{n-1}  \int_{D} \int_{t_k}^{t_{k+1}}  {\tt h}(\hat{u}_k) \hat{u}_k\, dW(s)\,{\rm d}x\Big] 
 + \mathbb{E}\Big[ \sum_{k=0}^{N-1} \big\| \int_{t_k}^{t_{k+1}}\int_{\tt E} \eta(\cdot,\hat{u}_k;z) \widetilde{N}({\rm d}z,{\rm d}s)\big\|_{L^2}^2\Big] \\
&  \qquad  + 2 \mathbb{E}\Big[ \max_{1 \le n \le N} \sum_{k=0}^{n-1}  \int_{D} \int_{t_k}^{t_{k+1}} \int_{\tt E} \eta(x,\hat{u}_k;z) \hat{u}_k\, \widetilde{N}({\rm d}z,{\rm d}s)\,{\rm d}x\Big] \\
&:= \mathbb{E} \big[\|\hat{u}_0\|_{L^2}^2\big] +2\, \kappa\mathbb{E}\Big[ \sum_{k=0}^{N}\|U_k\|_{L^2}^2 \Big] + \kappa \sum_{k=0}^{N-1} \mathbb{E}\big[ \|\hat{u}_{k+1}\|_{L^2}^2\big] + 2T
\|K_1\|_{L^1} +\sum_{i=1}^4 {\bf  I}_i\,.
\end{align*}
Using It\^{o}-L\'{e}vy isometry, and the assumption \ref{A6}, we obtain that
\begin{align*}
{\bf I}_3 =  \mathbb{E}\Big[ \int_0^T \int_{\tt E} \|\eta(\cdot, \bar{u}_\kappa(s);z)\|_{L^2}^2\,m({\rm d}z)\,{\rm d}s\Big]
&\le 2c_{\gamma} \Big( \|g\|_{L^2}^2 T + \|g\|_{L^\infty}^2 \mathbb{E}\Big[ \int_0^T \|\bar{u}_\kappa(s)\|_{L^2}^2\,{\rm d}s\Big]\Big) \notag \\
& \le 2c_{\gamma} \Big( \|g\|_{L^2}^2 T + \|g\|_{L^\infty}^2 \kappa \sum_{k=0}^N \mathbb{E}\big[ \|\hat{u}_k\|_{L^2}^2\big] \Big) \le C\,,
\end{align*}
where the last inequality follows from  \eqref{a-prioriestimate:1} and \eqref{esti:initial-approx}.  Similarly, by using It\^{o}- isometry, Assumption \ref{A4} and   \eqref{a-prioriestimate:1}, one has 
\begin{align*}
{\bf I}_1 &=  \mathbb{E}\Big[ \int_0^T  \|{\tt h}(\bar{u}_\kappa(s))\|_{\mathcal{L}_2(L^2)}^2\,{\rm d}s\Big]
\le 2 \Big( TC_5|D| + C_5\mathbb{E}\Big[ \int_0^T \|\bar{u}_\kappa(s)\|_{L^2}^2\,{\rm d}s\Big]\Big)\le C\,.
\end{align*}
Observe that, by using BDG inequality along with Young's inequality, we get
\begin{align*}
{\bf I}_2 + {\bf I}_4 & \le 2   \mathbb{E}\Big[ \max_{1 \le n \le N}  \Big|\int_{0}^{t_{n}} \int_{\tt E}  \big( \eta(x,\bar{u}_\kappa(s);z), \bar{u}_\kappa(s)\big)_{L^2} \widetilde{N}({\rm d}z,{\rm d}s) \Big|\Big] \notag \\
& \quad +  2   \mathbb{E}\Big[ \max_{1 \le n \le N}  \Big|  \big( \int_{0}^{t_{n}}{\tt h}(\bar{u}_\kappa(s))\,dW(s), \bar{u}_\kappa(s)\big)_{L^2}\Big|\Big] \notag \\
 & \le C  \mathbb{E}\Big[ \Big(\int_{0}^{T} \int_{\tt E}  \big( \eta(\cdot,\bar{u}_\kappa(s);z), \bar{u}_\kappa(s)\big)_{L^2}^2 m({\rm d}z)\,{\rm d}s \Big)^\frac{1}{2}+ \Big(\int_{0}^{T} \| {\tt h}(\bar{u}_\kappa(s)\|_{\mathcal{L}_2(L^2)}^2 \|\bar{u}_\kappa(s)\|_{L^2}^2 \,{\rm d}s \Big)^\frac{1}{2}\Big] \\
& \le C \mathbb{E}\Big[ \sup_{0\le t\le T} \|\bar{u}_\kappa(t)\|_{L^2} \Big\{ \Big( \int_0^T \int_{\tt E} \|\eta(\cdot,\bar{u}_\kappa(s);z)\|_{L^2}^2\,m({\rm d}z)\,{\rm d}s\Big)^\frac{1}{2} + \Big( \int_0^T \|{\tt h}(\bar{u}_\kappa(s))\|_{\mathcal{L}_2(L^2)}^2\,{\rm d}s\Big)^\frac{1}{2}\Big\} \Big] \\
& \le \eps_1 \mathbb{E}\Big[ \sup_{0\le t\le T} \|\bar{u}_\kappa(t)\|_{L^2}^2 \Big] + 
C(\eps_1) \big( {\bf I}_2 + {\bf I}_3\big) \notag \\
& \le \eps_1 \mathbb{E}\Big[ \max_{1\le n \le N} \|\hat{u}_n\|_{L^2}^2\Big] + \eps_1 \mathbb{E}\big[ \|\hat{u}_0\|_{L^2}^2\big]
+ C(\eps_1)  \big( {\bf I}_1 + {\bf I}_3\big)\,.
\end{align*}
Using all these estimates and choosing $\eps_1 <1$, the resulting inequality then takes the form 
\begin{align*}
\mathbb{E}\Big[ \max_{1\le n \le N} \|\hat{u}_n\|_{L^2}^2\Big]  \le C \Big\{  
\|\hat{u}_0\|_{L^2}^2\big]  + \|K_1\|_{L^1} +  \kappa\mathbb{E}\Big[ \sum_{k=0}^{N}\|U_k\|_{L^2}^2 \Big] + \kappa \sum_{k=0}^{N-1} \mathbb{E}\big[ \|\hat{u}_{k+1}\|_{L^2}^2\big] + {\bf I}_2 + {\bf I}_3\Big\}\,,
\end{align*}
for some constant $C$.  The assertion then follows once we use \eqref{a-prioriestimate:1},\eqref{esti:initial-approx} and \eqref{inq:projection-control}\,.
\vspace{.2cm}

 In view of \eqref{esti:initial-approx} and the first part of ${\rm (ii)}$, one can easily see that
\begin{align*}
   \mathbb{E}\Big[ \big\|\tilde{u}_{\kappa}\big\|_{L^p(0,T; W_0^{1,p})}^p \Big] 
 & \le C \kappa \sum_{k=0}^N  \mathbb{E}\big[ \|\hat{u}_k\|_{W_0^{1,p}}^p \big] 
 \le C \mathbb{E}\Big[ \int_0^T \|\grad  u_{\kappa}(t)\|_{L^p}^p\,{\rm d}t + \kappa\,\|\grad  \hat{u}_0\|_{L^p}^p\Big] \notag \\
 & \le C\,\Big(\|u_0\|_{L^2}^2 +  \mathbb{E} \Big[ \int_{0}^T |\|\nabla u_\kappa(t)\|_{L^p}^p\,{\rm d}t\Big] \Big)\le C\,.
\end{align*}
\end{proof}
\subsubsection{\bf Tightness and convergence of approximate solutions}
    Taking motivation from \cite{brzezniakmotyl2013,Majee2020, Metivier1988, Viot1988} and looking at the estimations in Lemma \ref{lem:a-priori:2}, we first show the tightness of $\mathcal{L}(\tilde{u}_{\kappa})$ on the functional space
\begin{align*}
 \mathcal{Z}_T:= \mathbb{D}([0,T]; W^{-1,p^\prime})\cap \mathbb{D}([0,T]; L^2_w)\cap L^2_w(0,T; W^{1,2}) \cap L^2(0,T; L^2)
\end{align*}
  equipped with the supremum topology $\mathcal{T}_T$ ; see \cite{Majee2022}. To proceed further, we recall the definition of Aldous condition. 

\begin{defi}\label{defi:aldous-condition}
(Aldous condition)
Let  $(\mathbb{B},\|\cdot\|_{\mathbb{B}})$ be a separable Banach space. We say that a sequence of $\{\mathcal{F}_t\}$-adapted, $\mathbb{B}$-valued c\`{a}dl\`{a}g processes $({\tt Z}_n)_{n\in \mathbb{N}}$ satisfy the Aldous condition
if for every $\delta,~m>0$, there exists $\eps>0$ such that
\begin{align*}
 \sup_{n\in \mathbb{N}}\sup_{0<\theta\le \eps} \mathbb{P}\big\{\|{\tt Z}_n(\kappa_n + \theta)-{\tt Z}_n(\kappa_n)\|_{\mathbb{B}}\ge m\big\} \le \delta\,
\end{align*} 
holds for every $\{\mathcal{F}_t\}$-adapted sequence of stopping times $(\kappa_n)_{n\in \mathbb{N}}$  with 
$\kappa_n\le T$.
\end{defi}
 We mention a sufficient condition to satisfy Aldous condition; cf.~ \cite[Lemma 9]{Motyl2013}.
\begin{lem}
 The sequence $({\tt Z}_n)_{n\in \mathbb{N}}$ as mentioned in Definition \ref{defi:aldous-condition} be a sequence 
 of c\`{a}dl\`{a}g stochastic processes satisfies the Aldous condition if there exist positive constants $\alpha,\, \zeta$ and $C$ such that for any $\theta >0$,
\begin{align}
 \mathbb{E}\Big[\|{\tt Z}_n(\kappa_n + \theta)-{\tt Z}_n(\kappa_n)\|^\alpha_{\mathbb{B}}\Big] \le C \theta^\zeta \label{inq:Aldous-condition}
\end{align}
holds for every sequence of $\{\mathcal{F}_t\}$-stopping times $(\kappa_n)$ with $\kappa_n\le T$.
\end{lem}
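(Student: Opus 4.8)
The plan is to deduce the qualitative Aldous condition of Definition~\ref{defi:aldous-condition} from the quantitative moment estimate \eqref{inq:Aldous-condition} by a direct application of Markov's inequality, so no new probabilistic machinery is required; the entire content of the lemma is that an $\alpha$-th moment bound which is \emph{uniform in $n$} and vanishes as a positive power of $\theta$ forces the tail probabilities to be uniformly small for small time increments.

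First I would fix arbitrary $\delta, m>0$, an arbitrary sequence of $\{\mathcal{F}_t\}$-stopping times $(\kappa_n)$ with $\kappa_n\le T$, and an arbitrary $\theta>0$, and estimate the tail probability occurring in Definition~\ref{defi:aldous-condition}. Since $x\mapsto x^\alpha$ is increasing on $[0,\infty)$ for $\alpha>0$, the event $\{\|{\tt Z}_n(\kappa_n+\theta)-{\tt Z}_n(\kappa_n)\|_{\mathbb{B}}\ge m\}$ coincides with $\{\|{\tt Z}_n(\kappa_n+\theta)-{\tt Z}_n(\kappa_n)\|_{\mathbb{B}}^\alpha\ge m^\alpha\}$. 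Applying Markov's inequality and then the hypothesis \eqref{inq:Aldous-condition} yields
\begin{align*}
\mathbb{P}\big\{\|{\tt Z}_n(\kappa_n+\theta)-{\tt Z}_n(\kappa_n)\|_{\mathbb{B}}\ge m\big\}
\le \frac{1}{m^\alpha}\,\mathbb{E}\Big[\|{\tt Z}_n(\kappa_n+\theta)-{\tt Z}_n(\kappa_n)\|_{\mathbb{B}}^\alpha\Big]
\le \frac{C}{m^\alpha}\,\theta^\zeta .
\end{align*}
The crucial feature is that the right-hand side is independent of $n$ and of the particular choice of stopping times, and depends on $\theta$ only through the increasing factor $\theta^\zeta$.

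Next I would exploit the monotonicity in $\theta$: because $\zeta>0$, for every $0<\theta\le\eps$ one has $\theta^\zeta\le\eps^\zeta$, whence
\begin{align*}
\sup_{n\in\mathbb{N}}\ \sup_{0<\theta\le\eps}\ \mathbb{P}\big\{\|{\tt Z}_n(\kappa_n+\theta)-{\tt Z}_n(\kappa_n)\|_{\mathbb{B}}\ge m\big\}
\le \frac{C}{m^\alpha}\,\eps^\zeta .
\end{align*}
It then suffices to choose $\eps=\big(\delta m^\alpha/C\big)^{1/\zeta}$ (or any smaller positive number), which makes the right-hand side at most $\delta$. As $\delta$ and $m$ were arbitrary and $(\kappa_n)$ was an arbitrary admissible sequence of stopping times, this is precisely the Aldous condition of Definition~\ref{defi:aldous-condition}.

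As for difficulties, there is essentially no genuine obstacle: the argument is a one-line Markov estimate followed by the elementary observation that $\theta\mapsto\theta^\zeta$ is increasing. The only points that require a modicum of care are that the constants $C,\alpha,\zeta$ in \eqref{inq:Aldous-condition} must be uniform in $n$ and valid for every admissible sequence of stopping times --- which is exactly how the hypothesis is stated --- so that the resulting threshold $\eps$ depends only on $\delta$ and $m$, and not on $n$, as the definition demands.
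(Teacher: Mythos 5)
Your argument is correct: the Markov--Chebyshev estimate followed by choosing $\eps=(\delta m^\alpha/C)^{1/\zeta}$ is exactly the standard proof of this sufficient condition. The paper itself states the lemma without proof, citing \cite[Lemma 9]{Motyl2013}, and the proof given there is the same elementary argument you describe.
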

Following the line arguments of \cite[Lemma $3.3$]{brzezniakmotyl2013}, \cite[Lemma $2.5$]{Rozovskii2005}, \cite[Theorem $2$, Lemma $7$,~$\&$~Corollary $1$]{Motyl2013}, we now state~(without proof) tightness criterion of the family of laws of some process in $\mathcal{Z}_T$.
\begin{thm}\label{thm:for-tightness}
 Let $({\tt X}_{\kappa})_{\kappa >0}$ be a sequence of $\mathbb{F}$-adapted, $W^{-1,p^\prime}$-valued c\`{a}dl\`{a}g stochastic process such that 
  \begin{itemize}
  \item[(i)]
   $({\tt X}_{\kappa})_{\kappa >0}$ satisfies the Aldous condition in $W^{-1,p^\prime}$,
 \item[(ii)] for some constant $C>0$, 
  \begin{align*}
   \sup_{\kappa >0} \mathbb{E}\Big[ \sup_{t\in [0,T]}\|{\tt X}_{\kappa}(t)\|_{L^2}\Big] \le C, \quad 
   \sup_{\kappa >0} \mathbb{E}\Big[\int_0^T\|{\tt X}_{\kappa}(t)\|_{W_0^{1,p}}^2\,{\rm d}t\Big] \le C\,.
  \end{align*}
  \end{itemize}
Then the sequence  $\big(\mathcal{L}({\tt X}_{\kappa})\big)_{\kappa >0}$ is tight on $(\mathcal{Z}_T, \mathcal{T}_T)$ .
\end{thm}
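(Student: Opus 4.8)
The plan is to deduce tightness from Prokhorov's theorem, so it suffices to produce, for every $\eps>0$, a set $K_\eps$ that is relatively compact in $(\mathcal{Z}_T,\mathcal{T}_T)$ and satisfies $\mathbb{P}({\tt X}_\kappa\in K_\eps)\ge 1-\eps$ uniformly in $\kappa$. The backbone is a purely deterministic compactness criterion for the intersection space $\mathcal{Z}_T$, of the kind established in \cite[Theorem $2$]{Motyl2013} and \cite[Lemma $3.3$]{brzezniakmotyl2013}: a subset $K\subset\mathcal{Z}_T$ is relatively compact provided (a) it is bounded in $L^\infty(0,T;L^2)$, (b) it is bounded in $L^2(0,T;W^{1,2})$, and (c) its elements are uniformly equi-c\`{a}dl\`{a}g in $W^{-1,p^\prime}$, meaning the Skorokhod modulus $w'_{W^{-1,p^\prime}}(u,\cdot)$ tends to $0$ uniformly over $u\in K$. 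The analytic engine behind this criterion is the chain of embeddings $W^{1,2}\hookrightarrow\hookrightarrow L^2\hookrightarrow W^{-1,p^\prime}$, in which the first embedding is compact by Rellich--Kondrachov on the bounded Lipschitz domain $D$ and the second is compact by duality from $W_0^{1,p}\hookrightarrow\hookrightarrow L^2$ (valid for $p>2$); an Aubin--Lions--Simon argument then promotes (b) together with (c) to genuine \emph{strong} compactness in the factor $L^2(0,T;L^2)$, while the bound (a) simultaneously yields compactness in the weak topologies $L^2_w(0,T;W^{1,2})$ and $\mathbb{D}([0,T];L^2_w)$.

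Granting this criterion, I would verify its three hypotheses in probability using the assumptions of the theorem. Conditions (a) and (b) follow immediately from Chebyshev's inequality: by hypothesis (ii) one has $\mathbb{E}[\sup_{t\in[0,T]}\|{\tt X}_\kappa(t)\|_{L^2}]\le C$, so a large radius $R_1$ renders $\mathbb{P}(\sup_{t}\|{\tt X}_\kappa(t)\|_{L^2}>R_1)\le\eps/3$ uniformly in $\kappa$; and since $W_0^{1,p}\hookrightarrow W^{1,2}$ continuously for $p>2$ on the bounded domain, the second bound in (ii) gives $\mathbb{E}[\int_0^T\|{\tt X}_\kappa(t)\|_{W^{1,2}}^2\,{\rm d}t]\le C$, whence $\mathbb{P}(\int_0^T\|{\tt X}_\kappa(t)\|_{W^{1,2}}^2\,{\rm d}t>R_2)\le\eps/3$ for $R_2$ large.

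It remains to verify the modulus condition (c), which is where hypothesis (i) is consumed and which is the main obstacle. The point is that the Aldous condition in $W^{-1,p^\prime}$, combined with compact containment in $W^{-1,p^\prime}$ --- itself a consequence of the $L^2$ bound in (ii) via the compact embedding $L^2\hookrightarrow\hookrightarrow W^{-1,p^\prime}$ --- yields, through the Aldous tightness criterion for c\`{a}dl\`{a}g processes in the form of \cite[Lemma $7$ and Corollary $1$]{Motyl2013} and \cite[Lemma $2.5$]{Rozovskii2005}, that $(\mathcal{L}({\tt X}_\kappa))_\kappa$ is already tight on $\mathbb{D}([0,T];W^{-1,p^\prime})$. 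Since the relatively compact subsets of $\mathbb{D}([0,T];W^{-1,p^\prime})$ are exactly those with uniformly vanishing Skorokhod modulus, this tightness furnishes a set of probability $\ge1-\eps/3$ on which (c) holds. The delicate technical issue is that the jumps of the L\'{e}vy-driven process force the use of the Skorokhod modulus $w'$ in place of the ordinary modulus of continuity, so the Aubin--Lions--Simon compactness feeding the strong $L^2(0,T;L^2)$ factor must be carried out in its c\`{a}dl\`{a}g version; reconciling this weak-in-$W^{-1,p^\prime}$ time regularity with the compact spatial embedding into $L^2$ is the technical heart of the argument. Finally, intersecting the three high-probability sets produces $K_\eps$ with $\mathbb{P}({\tt X}_\kappa\in K_\eps)\ge1-\eps$ for all $\kappa$; by the deterministic criterion $K_\eps$ is relatively compact in $\mathcal{Z}_T$, and Prokhorov's theorem then gives tightness of $(\mathcal{L}({\tt X}_\kappa))_\kappa$ on $(\mathcal{Z}_T,\mathcal{T}_T)$.
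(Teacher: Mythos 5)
Your proposal is correct and follows essentially the route the paper intends: Theorem \ref{thm:for-tightness} is stated in the paper without proof, with a pointer to \cite[Theorem 2, Lemma 7, Corollary 1]{Motyl2013}, \cite[Lemma 3.3]{brzezniakmotyl2013} and \cite[Lemma 2.5]{Rozovskii2005}, and your argument --- the deterministic compactness criterion for $\mathcal{Z}_T$ built on the chain $W^{1,2}\hookrightarrow\hookrightarrow L^2 \hookrightarrow\hookrightarrow W^{-1,p^\prime}$, Chebyshev's inequality for the two energy bounds, the Aldous condition controlling the Skorokhod modulus, and Prokhorov's theorem to assemble the three high-probability sets --- is exactly the content of those references. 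I see no gap to report.
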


\begin{lem}\label{lem:tightness}
 The sequence $\big(\mathcal{L}(\tilde{u}_{\kappa})\big)_{\kappa >0}$ is tight on $(\mathcal{Z}_T, \mathcal{T}_T)$. 
\end{lem}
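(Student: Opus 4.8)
The plan is to apply the tightness criterion of Theorem \ref{thm:for-tightness} to the family ${\tt X}_\kappa=\tilde u_\kappa$. Condition (ii) of that theorem is already at hand: the bound $\sup_\kappa\mathbb{E}[\sup_t\|\tilde u_\kappa(t)\|_{L^2}]\le C$ follows from the first estimate in Lemma \ref{lem:a-priori:2}(iii) together with Jensen's inequality, while $\sup_\kappa\mathbb{E}[\int_0^T\|\tilde u_\kappa(t)\|_{W_0^{1,p}}^2\,{\rm d}t]\le C$ follows from the second estimate in Lemma \ref{lem:a-priori:2}(iii) and H\"older's inequality in time (admissible since $p>2$ and $|[0,T]|<\infty$). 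Thus the whole task reduces to verifying the Aldous condition for the continuous (hence c\`adl\`ag) process $\tilde u_\kappa$ in $W^{-1,p^\prime}$, for which I would use the sufficient moment bound \eqref{inq:Aldous-condition}.

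First I would rewrite $\tilde u_\kappa$ in a genuinely continuous-in-time integral form. Summing the scheme \eqref{eq:discrete-nonlinear} and using that the affine interpolation of the primitive of a piecewise-constant integrand coincides with that primitive, one obtains, with $u_\kappa$, $\tilde M_\kappa$, $\tilde J_\kappa$ as in \eqref{eq:defi-piecewise-affine-function} and a piecewise-constant control $\check U_\kappa$,
\begin{align*}
\tilde u_\kappa(t) = u_{0,\kappa} + \int_0^t \Div\big( {\tt A}(x,u_\kappa,\grad u_\kappa) + \vec F(u_\kappa)\big)\,{\rm d}s + \int_0^t \check U_\kappa\,{\rm d}s + \tilde M_\kappa(t) + \tilde J_\kappa(t).
\end{align*}
For a stopping time $\kappa_n\le T$ and $\theta>0$ I would then estimate the four increments over $[\kappa_n,\kappa_n+\theta]$ separately in $W^{-1,p^\prime}$, taking $\alpha=1$ in \eqref{inq:Aldous-condition}. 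For the ${\tt A}$-term, Assumption \ref{A2}(iii) gives $\|{\tt A}(\cdot,u_\kappa,\grad u_\kappa)\|_{L^{p^\prime}}^{p^\prime}\le C(1+\|u_\kappa\|_{W_0^{1,p}}^p)$ (using $(p-1)p^\prime=p$ and the Poincar\'e inequality), so by Lemma \ref{lem:a-priori:2}(ii) the field lies in $L^{p^\prime}(\Omega\times D_T)$ uniformly in $\kappa$; since $\Div:L^{p^\prime}\to W^{-1,p^\prime}$ is bounded, H\"older's inequality in time yields $\mathbb{E}\big\|\int_{\kappa_n}^{\kappa_n+\theta}\Div{\tt A}\,{\rm d}s\big\|_{W^{-1,p^\prime}}\le C\theta^{1/p}$. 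Using $\vec F(0)=\vec 0$ and Lipschitzness (Assumption \ref{A3}), together with the continuous embedding $L^2\hookrightarrow L^{p^\prime}$ on the bounded domain $D$, the $\vec F$-term and the control term are each bounded by $C\theta^{1/2}$ via the Cauchy--Schwarz inequality in time and \eqref{inq:projection-control}.

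For the two stochastic interpolation terms I would exploit that $M_\kappa$ and $J_\kappa$ are square-integrable martingales with orthogonal increments over the mesh: writing $\tilde M_\kappa(\kappa_n+\theta)-\tilde M_\kappa(\kappa_n)$ as a weighted combination of the node increments $M_\kappa(t_{k+1})-M_\kappa(t_k)$ with weights in $[0,1]$ summing to $\theta/\kappa$, the It\^o isometry and Assumption \ref{A4} give $\mathbb{E}\|\tilde M_\kappa(\kappa_n+\theta)-\tilde M_\kappa(\kappa_n)\|_{L^2}^2\le C\theta$, uniformly in $\kappa$; the analogous bound for $\tilde J_\kappa$ follows from the It\^o--L\'evy isometry and Assumption \ref{A6}. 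Since $L^2\hookrightarrow W^{-1,p^\prime}$, each contributes $C\theta^{1/2}$ after Jensen. Collecting the four bounds and using $\theta^{1/2}\le\theta^{1/p}$ for $\theta\le1$ (as $p>2$), I obtain $\mathbb{E}\|\tilde u_\kappa(\kappa_n+\theta)-\tilde u_\kappa(\kappa_n)\|_{W^{-1,p^\prime}}\le C\theta^{1/p}$, which is \eqref{inq:Aldous-condition} with $\alpha=1$, $\zeta=1/p$; Theorem \ref{thm:for-tightness} then gives the claim. The main obstacle is precisely the stochastic interpolation estimate: because $\tilde M_\kappa,\tilde J_\kappa$ are affine interpolations rather than martingales, their increment over the \emph{random} interval $[\kappa_n,\kappa_n+\theta]$ must be controlled uniformly in the mesh $\kappa$, which requires using the orthogonality of the underlying martingale increments and carefully handling the stopping time as in \cite{brzezniakmotyl2013,Motyl2013}; by contrast, the drift estimates are pathwise and routine.
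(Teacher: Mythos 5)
Your proposal is correct and follows essentially the same route as the paper: verify condition (ii) of Theorem \ref{thm:for-tightness} from Lemma \ref{lem:a-priori:2}(iii), then check the Aldous condition by decomposing $\tilde u_\kappa$ into the initial datum, the divergence drift, the control integral, and the two interpolated stochastic integrals, obtaining $\theta^{1/p}$ for the ${\tt A}$-term via \ref{A2}(iii) and H\"older, $\theta^{1/2}$ for the control term, and $\theta$ (at second moment) for the martingale terms via the It\^o and It\^o--L\'evy isometries. The only cosmetic differences are that you merge all exponents into a single $(\alpha,\zeta)=(1,1/p)$ whereas the paper applies the sufficient condition term by term with different exponents, and that you are somewhat more explicit than the paper about controlling the affine interpolations $\tilde M_\kappa,\tilde J_\kappa$ over a random interval via orthogonality of the underlying martingale increments.
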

\begin{proof} We follow the same line of arguments as invoked in \cite{Majee2020}. Thanks to Lemma \ref{lem:a-priori:2}, ${\rm (iii)}$, the uniform estimates ${\rm (ii)}$ of Theorem \ref{thm:for-tightness} hold true for the sequence $(\tilde{u}_{\kappa})_{\kappa >0}$. Thus, it remains to show that $(\tilde{u}_{\kappa})_{\kappa >0}$ satisfies the Aldous condition 
 in $W^{-1,p^\prime}$. For that, we rewrite
 \eqref{eq:discrete-nonlinear} as 
\begin{align}
 \tilde{u}_{\kappa}(t)&= \hat{u}_0 + \int_0^t \mbox{div}_x \big({\tt A}(x, u_{\kappa}(s),\grad u_{\kappa}(s)) + \vec{F}(u_{\kappa}(s))\big)\,{\rm d}s + \int_0^t U_\kappa(s)\,{\rm d}s + \tilde{M}_\kappa(t) +  \tilde{J}_{\kappa}(t) \notag \\
  &\equiv  \hat{u}_0 + {\tt T}_1^{\kappa}(t) + {\tt T}_2^{\kappa}(t) + {\tt T}_3^{\kappa}(t) + {\tt T}_4^{\kappa}(t)\,. \label{eq:time-cont-p-laplace}
\end{align}
We show that each term in \eqref{eq:time-cont-p-laplace} satisfies \eqref{inq:Aldous-condition} for certain choice of
$\alpha$ and $\zeta$. Clearly, $ \hat{u}_0$ satisfies \eqref{inq:Aldous-condition} for any $\alpha, \zeta$.
For ${\tt T}_2^{\kappa}(t)$, we have, for any  $\theta >0$
\begin{align*}
& \mathbb{E}\Big[  \big\| {\tt T}_2^{\kappa}(\kappa_m + \theta)-{\tt T}_2^{\kappa}(\kappa_m)\big\|_{W^{-1,p^\prime}}\Big] 
 \le C  \mathbb{E}\Big[  \big\| \int_{\kappa_m}^{\kappa_m + \theta} U_\kappa(s)\,{\rm d}s \big\|_{L^2}\Big] \\
 &\le  C  \mathbb{E}\Big[ \int_{\kappa_m}^{\kappa_m + \theta} \|U_\kappa(s) \big\|_{L^2}\,{\rm d}s\Big]\le C \theta^{\frac{1}{2}} \Big( \mathbb{E}\Big[ \int_0^T \|U(s)\|_{L^2}^2\,{\rm d}s\Big] \Big)^{\frac{1}{2}} \le C  \theta^{\frac{1}{2}}\,,
\end{align*}
where $(\kappa_m)$ is a sequence of $\mathbb{F}$- stopping times  with $\kappa_m \le T$.
Thanks to the assumption \ref{A2}, ${\rm ii)}$ and Lemma \ref{lem:a-priori:2} along with H\"{o}lder's inequality, one has 
\begin{align*}
& \mathbb{E}\Big[  \big\| {\tt T}_1^{\kappa}(\kappa_m + \theta)-{\tt T}_1^{\kappa}(\kappa_m)\big\|_{W^{-1,p^\prime}}\Big] \\
 &=\mathbb{E}\Big[ \big\|\int_{\kappa_m}^{\kappa_m + \theta} \mbox{div}_x \big(  {\tt A} (\cdot, u_{\kappa}(s),\grad u_{\kappa}(s))+ \vec{F}(u_{\kappa}(s))\big)\,{\rm d}s\big\|_{W^{-1,p^\prime}}\Big] \notag \\
 & \le \mathbb{E}\Big[\int_{\kappa_m}^{\kappa_m + \theta}\big\|  {\tt A} (\cdot, u_{\kappa}(s),\grad u_{\kappa}(s))+ \vec{F}(u_{\kappa}(s))\big\|_{L^{p^\prime}}\,{\rm d}s\Big] \\
 & \le C\mathbb{E}\Big[\int_{\kappa_m}^{\kappa_m + \theta}\Big( \big\| u_{\kappa}(s)\big\|_{W_0^{1,p}}^{p}+   \|u_\kappa(s)\|_{L^{p^\prime}}^{p^\prime}+ \|K_2\|_{L^{p^\prime}}^{p^\prime} \Big)^\frac{1}{p^\prime}\,{\rm d}s\Big] \\
 & \le C \theta^\frac{1}{p}\Big\{ \mathbb{E}\Big[ \int_0^T\Big(\big\| u_{\kappa}(s)\big\|_{W_0^{1,p}}^{p}+   \|u_\kappa(s)\|_{L^{p^\prime}}^{p^\prime}+ \|K_2\|_{L^{p^\prime}}^{p^\prime}\Big)\,{\rm d}s\Big] \Big\}^\frac{1}{p^\prime}
 \le C \theta^\frac{1}{p}\,.
\end{align*}
Thus, \eqref{inq:Aldous-condition} is satisfied by  ${\tt T}_1^{\kappa}(t)$ for $\alpha=1$ and $\zeta=\frac{1}{p}$.
Furthermore, by using the embedding  $W_0^{1,p}\hookrightarrow L^2\hookrightarrow W^{-1,p^\prime}$, It\^{o}-L\'{e}vy isometry, the assumption \ref{A6} and ${\rm (iii)}$ of Lemma \ref{lem:a-priori:2}, we obtain
\begin{align*}
  \mathbb{E}\Big[\big\| {\tt T}_4^{\kappa}(\kappa_m +\theta)-{\tt T}_4^{\kappa}(\kappa_m)\big\|_{W^{-1,p^\prime}}^2\Big] 
 & \le C \mathbb{E}\Big[\Big\|  \int_{\kappa_m}^{\kappa_m + \theta} \int_{\tt E} \eta(\cdot,\bar{u}_{\kappa}(s);z) \widetilde{N}({\rm d}z,{\rm d}s)\Big\|_{L^2}^2\Big] \\
 & \le C \mathbb{E}\Big[ \int_{\kappa_m}^{\kappa_m + \theta} \int_{\tt E} \big\| \eta(\cdot,\bar{u}_{\kappa}(s);z)\big\|_{L^2}^2 \,m({\rm d}z)\,{\rm d}s\Big] \\
 & \le C c_{\gamma} \mathbb{E}\Big[ \int_{\kappa_m}^{\kappa_m + \theta} \big( \|g\|_{L^2}^2 + \|g\|_{L^\infty}^2\big\| \bar{u}_{\kappa}(s)\big\|_{L^2}^2 \big)\,{\rm d}s\Big]\le C \theta.
\end{align*}
This shows that ${\tt T}_4^{\kappa}(t)$ satisfies \eqref{inq:Aldous-condition} for $\alpha=2$ and $\zeta=1$. Similarly, one can easily show that 
\begin{align*}
  \mathbb{E}\Big[\big\| {\tt T}_3^{\kappa}(\kappa_m +\theta)-{\tt T}_3^{\kappa}(\kappa_m)\big\|_{W^{-1,p^\prime}}^2\Big]
  \le C \theta \Big(|D|+\mathbb{E}\Big[\sup_{s\in [0,T]}\big\| u_{\kappa}(s))\big\|_{L^2}^2\Big] \Big) \le  C \theta.
 \end{align*}
Thus, the sequence $\big(\mathcal{L}(\tilde{u}_{\kappa})\big)_{\kappa >0}$ is tight on $(\mathcal{Z}_T, \mathcal{T}_T)$.
\end{proof}

Let $M_{\bar{\mathbb{N}}}({\tt E} \times [0,T])$ be set of all $\bar{\mathbb{N}}:=\mathbb{N}\cup \{ \infty\}$-valued measures on $({\tt E}\times[0,T],\mathcal{B}({\tt E}\times[0,T]))$ endowed with the $\sigma$-field $\mathcal{M}_{\bar{\mathbb{N}}}({\tt E}\times [0,T])$ generated by the projection maps 
$i_B: M_{\bar{\mathbb{N}}}({\tt E}\times [0,T])\ni \mu\mapsto \mu(B)\in \bar{\mathbb{N}}\quad \forall B \in \mathcal{B}({\tt E}\times [0,T])$. Observe that $M_{\bar{\mathbb{N}}}({\tt E}\times [0,T])$ is a separable metric space. For $\kappa>0$, define $N_{\kappa}({\rm d}z,{\rm d}t):=N({\rm d}z,{\rm d}t)$. Then, by \cite[Theorem 3.2]{Parthasarathy1967}, we arrive at the following conclusion.
\begin{lem}
The laws of the family  $\{ N_{\kappa}({\rm d}z,{\rm d}t)\}$ is tight on $M_{\bar{\mathbb{N}}}({\tt E} \times [0,T])$. Moreover, the set $\{\mathcal{L}(W_\kappa): \kappa >0\}$ is tight on  $C([0,T];\mathcal{U})$, where $W_\kappa=W$ for all $\kappa>0$.
\end{lem}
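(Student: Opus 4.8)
The plan is to exploit the fact that both families appearing in the statement are \emph{constant} in the discretization parameter: by construction $N_\kappa({\rm d}z,{\rm d}t)=N({\rm d}z,{\rm d}t)$ and $W_\kappa=W$ for every $\kappa>0$. Consequently the family of laws $\{\mathcal{L}(N_\kappa):\kappa>0\}$ consists of the single measure $\mathcal{L}(N)$ on $M_{\bar{\mathbb{N}}}({\tt E}\times[0,T])$, and likewise $\{\mathcal{L}(W_\kappa):\kappa>0\}$ reduces to the single law $\mathcal{L}(W)$ on $C([0,T];\mathcal{U})$. Tightness of a \emph{singleton} family of Borel probability measures is nothing but the inner regularity (with respect to compact sets) of that single measure, so the whole lemma reduces to showing that $\mathcal{L}(N)$ and $\mathcal{L}(W)$ are each tight as individual measures.

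First I would verify that the ambient spaces are Polish. As already recorded in the excerpt, $M_{\bar{\mathbb{N}}}({\tt E}\times[0,T])$ is a separable metric space; together with the completeness of the metric generating $\mathcal{M}_{\bar{\mathbb{N}}}({\tt E}\times[0,T])$ this makes it a complete separable metric space. The space $C([0,T];\mathcal{U})$ is Polish because $\mathcal{U}$ is a separable Hilbert space and the sup-norm topology on continuous $\mathcal{U}$-valued paths is complete and separable. On any Polish space Ulam's theorem guarantees that every Borel probability measure is tight: for each $\eps>0$ there is a compact set $K_\eps$ with $\mathcal{L}(N)(M_{\bar{\mathbb{N}}}({\tt E}\times[0,T])\setminus K_\eps)<\eps$, and similarly a compact set for $\mathcal{L}(W)$. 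This is precisely the content of \cite[Theorem 3.2]{Parthasarathy1967}, which I would invoke directly.

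Since the chosen $K_\eps$ does not depend on $\kappa$ (the laws being identical for all $\kappa$), the uniform bound $\sup_{\kappa>0}\mathcal{L}(N_\kappa)(M_{\bar{\mathbb{N}}}({\tt E}\times[0,T])\setminus K_\eps)<\eps$ holds trivially, which is exactly the definition of tightness of the family $\{\mathcal{L}(N_\kappa)\}$; the same reasoning applies verbatim to $\{\mathcal{L}(W_\kappa)\}$ on $C([0,T];\mathcal{U})$. The main (and only) point requiring care is therefore not probabilistic but topological: one must confirm that $M_{\bar{\mathbb{N}}}({\tt E}\times[0,T])$ really is complete as a metric space, so that Ulam's theorem is applicable. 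This completeness is supplied by the cited Parthasarathy result, after which no further estimates are needed.
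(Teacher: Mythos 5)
Your argument is correct and is exactly the route the paper takes: the paper offers no proof beyond citing \cite[Theorem 3.2]{Parthasarathy1967}, which is precisely the Ulam-type tightness of a single Borel probability measure that you spell out for the constant families $N_\kappa=N$ and $W_\kappa=W$. The only small imprecision is that the completeness/separability of $M_{\bar{\mathbb{N}}}({\tt E}\times[0,T])$ is not itself ``supplied by'' Parthasarathy's theorem but is a separate (standard) fact about this space of $\bar{\mathbb{N}}$-valued measures, recorded e.g.\ in \cite{erika2014}; once that is granted, your proof is complete.
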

For control processes $({U}_\kappa)$, we show tightness of the family
 $(\mathcal{L}({U}_\kappa))_{\kappa>0}$ on $\mathbb{X}_{U}:= \big( L^2(0,T;L^2),w\big)$.
\begin{lem}\label{lem:tightness-control}
 The family of the laws of  $({U}_\kappa)$ i.e., $\{ \mathcal{L}(U_{\kappa}): \kappa >0\}$ is tight on $\mathbb{X}_{U}$.
\end{lem}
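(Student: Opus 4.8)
The plan is to exploit the reflexivity of the Hilbert space $L^2(0,T;L^2)$ together with a uniform second-moment bound on the family $(U_\kappa)_{\kappa>0}$. Since tightness on the topological space $\mathbb{X}_U=(L^2(0,T;L^2),w)$ requires, for each $\eps>0$, a \emph{compact} set carrying mass at least $1-\eps$ uniformly in $\kappa$, the natural candidates are the closed balls of $L^2(0,T;L^2)$: these are weakly compact by the Banach--Alaoglu--Kakutani theorem, and since $L^2(0,T;L^2)$ is separable, the weak topology restricted to each such ball is metrizable, so the balls are genuine compact subsets of $\mathbb{X}_U$.

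First I would record the uniform bound. From the definition $U_\kappa(t)=\sum_{k=0}^{N-1}U_{k+1}{\bf 1}_{[t_k,t_{k+1})}(t)$ one has
$$\mathbb{E}\Big[\int_0^T \|U_\kappa(t)\|_{L^2}^2\,{\rm d}t\Big]=\kappa\sum_{k=0}^{N-1}\mathbb{E}\big[\|U_{k+1}\|_{L^2}^2\big]\le C,$$
where the bound follows directly from the projection estimate \eqref{inq:projection-control}, with $C$ independent of $\kappa$. In particular $\sup_{\kappa>0}\mathbb{E}\big[\|U_\kappa\|_{L^2(0,T;L^2)}^2\big]\le C$.

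Next, for $R>0$ set $B_R:=\{f\in L^2(0,T;L^2):\|f\|_{L^2(0,T;L^2)}\le R\}$, which by the discussion above is compact in $\mathbb{X}_U$. Chebyshev's inequality and the uniform bound give
$$\mathbb{P}\big(U_\kappa\notin B_R\big)=\mathbb{P}\big(\|U_\kappa\|_{L^2(0,T;L^2)}>R\big)\le\frac{1}{R^2}\,\mathbb{E}\big[\|U_\kappa\|_{L^2(0,T;L^2)}^2\big]\le\frac{C}{R^2}$$
for every $\kappa>0$. Given $\eps>0$, choosing $R=R_\eps$ with $C/R_\eps^2\le\eps$ yields $\mathbb{P}(U_\kappa\in B_{R_\eps})\ge 1-\eps$ uniformly in $\kappa$, which is precisely the tightness of $\{\mathcal{L}(U_\kappa):\kappa>0\}$ on $\mathbb{X}_U$.

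The argument is essentially routine; the only point requiring care---and the one I would state explicitly---is that tightness is meant with respect to compact sets of the (non-metrizable) space $\mathbb{X}_U$, so one must justify that the weakly compact balls $B_R$ are indeed compact in the relevant topology. This is guaranteed by reflexivity (weak compactness of closed balls) together with separability of $L^2(0,T;L^2)$ (metrizability of the weak topology on bounded sets), so no genuine obstacle arises beyond invoking these standard facts.
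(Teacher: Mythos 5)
Your argument is exactly the paper's: the uniform second-moment bound from \eqref{inq:projection-control}, weak compactness of the balls $B_R$ in $\mathbb{X}_U$, and Chebyshev/Markov to push the mass onto $B_{R_\eps}$. The only difference is that you spell out why $B_R$ is compact (Banach--Alaoglu plus metrizability of the weak topology on bounded sets of a separable reflexive space), which the paper simply asserts; this is a correct and complete proof.
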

\begin{proof}
Observe that 
\begin{align*}
\sup_{\kappa>0} \mathbb{E}\Big[ \int_0^T \|U_{\kappa}(t)\|_{L^2}^2\,{\rm d}t\Big] \le \sup_{\kappa>0} \kappa \sum_{k=0}^{N-1} \mathbb{E}\big[ \|U_{k+1}\|_{L^2}^2\big] \le C \mathbb{E}\Big[ \int_0^T \|U(t)\|_{L^2}^2\,{\rm d}t\Big] \le C\,.
\end{align*}
Now, for any $R>0$, the set $B_R:=\{ U\in L^2(0,T; L^2): \|U\|_{L^2(0,T; L^2)}\le R\}$ is relatively compact in $\mathbb{X}_{U}$. Moreover, by Markov's inequality, we see that
\begin{align*}
\mathbb{P}\Big( \|U_\kappa\|_{L^2(0,T;L^2)} >R\Big) \le \frac{ \mathbb{E}\Big[ \int_0^T \|U_{\kappa}(t)\|_{L^2}^2\,{\rm d}t\Big] }{R^2}\le \frac{C}{R^2}\,.
\end{align*}
Hence, the set  $\{ \mathcal{L}(U_{\kappa}): \kappa >0\}$ is tight in $\mathbb{X}_{U}$.
\end{proof}
Thanks to Lemmas \ref{lem:tightness}-\ref{lem:tightness-control}, we infer that the family of the laws of sequence $\big\{(\tilde{u}_{\kappa}, U_{\kappa}, W_\kappa, N_{\kappa}): \kappa >0\big\}$ is tight on $\mathcal{X}_T$, where the space $\mathcal{X}_T$ is defined as 
$$ \mathcal{X}_T:=\mathcal{Z}_T\times \mathbb{X}_U\times C([0,T];\mathcal{U}) \times M_{\bar{\mathbb{N}}}({\tt E} \times [0,T]).$$ We then apply Jakubowski's version of Skorokhod theorem together with \cite[Corollary 2]{Motyl2013}, and \cite[Theorem $D1$]{erika2014} to arrive at the following proposition. 
\begin{prop}\label{prop:skorokhod-representation}
There exist a new probability space $(\bar{\Omega}, \bar{\mathcal{F}}, \bar{\mathbb{P}})$, and  $\mathcal{X}_T$-valued 
random variables \\ $(u_{\kappa}^*, U_{\kappa}^*,W_{\kappa}^*, N_{\kappa}^*)$ and $(u_*, U_*,W_*, N_*)$ defined on  $(\bar{\Omega}, \bar{\mathcal{F}}, \bar{\mathbb{P}})$ satisfying the followings:
\begin{itemize}
 \item [a).] $\mathcal{L}({u}_{\kappa}^*, U_{\kappa}^*, W_{\kappa}^*,N_{\kappa}^*)=\mathcal{L}(\tilde{u}_{\kappa}, U_{\kappa}, W_\kappa, N_{\kappa})$ for all $\kappa >0$,
 \item [b).] $({u}_{\kappa}^*, U_{\kappa}^*, W_{\kappa}^*, N_{\kappa}^*)\goto ({u}_*,U_*, W_*, N_*)$ in $\mathcal{X}_T\quad \bar{\mathbb{P}}$-a.s. as $\kappa \goto 0$,
 \item [c).] $\big( W_{\kappa}^*(\bar{\omega}), N_{\kappa}^*(\bar{\omega})\big)=\big(  W_*(\bar{\omega}), N_*(\bar{\omega})\big)$ for all $\bar{\omega}\in \bar{\Omega}$.
\end{itemize}
\end{prop}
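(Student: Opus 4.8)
The plan is to invoke Jakubowski's topological version of the Skorokhod representation theorem, which applies to topological spaces that need not be metrizable but admit a countable family of continuous real-valued functions separating points. First I would verify that the product space $\mathcal{X}_T=\mathcal{Z}_T\times \mathbb{X}_U\times C([0,T];\mathcal{U}) \times M_{\bar{\mathbb{N}}}({\tt E} \times [0,T])$ fulfils this hypothesis. Each factor does: $\mathcal{Z}_T$ carries such a separating sequence by the construction indicated for the supremum topology $\mathcal{T}_T$ (cf.\ \cite{Motyl2013}); the weak topology on the separable Hilbert space $L^2(0,T;L^2)$ underlying $\mathbb{X}_U$ is generated by countably many continuous linear functionals; $C([0,T];\mathcal{U})$ is a Polish space; and $M_{\bar{\mathbb{N}}}({\tt E}\times[0,T])$ is a separable metric space whose $\sigma$-field is generated by the projection maps $i_B$. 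Since a finite product of spaces each possessing a countable separating family again possesses one, Jakubowski's theorem is applicable on $\mathcal{X}_T$.

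Next, Lemmas \ref{lem:tightness}--\ref{lem:tightness-control}, together with the tightness of the laws of $\{W_\kappa\}$ and $\{N_\kappa\}$ already recorded, show that the family $\{\mathcal{L}(\tilde{u}_{\kappa}, U_{\kappa}, W_\kappa, N_{\kappa}) : \kappa>0\}$ is tight on $\mathcal{X}_T$. I would then extract, along a subsequence which I do not relabel, a new probability space $(\bar{\Omega}, \bar{\mathcal{F}}, \bar{\mathbb{P}})$ and $\mathcal{X}_T$-valued random variables $(u_{\kappa}^*, U_{\kappa}^*,W_{\kappa}^*, N_{\kappa}^*)$ and $(u_*, U_*,W_*, N_*)$ realising the representation. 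Assertion (a), equality of laws, and assertion (b), $\bar{\mathbb{P}}$-almost sure convergence in $\mathcal{X}_T$, are precisely the two conclusions furnished by the theorem for tight families on such spaces.

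The delicate point is assertion (c), namely that $\big(W_{\kappa}^*(\bar{\omega}), N_{\kappa}^*(\bar{\omega})\big)=\big(W_*(\bar{\omega}), N_*(\bar{\omega})\big)$ holds for \emph{every} $\bar{\omega}\in\bar{\Omega}$ and every $\kappa$, rather than merely in law or in the almost sure limit. This is possible precisely because the noise components are constant in $\kappa$: one has $W_\kappa=W$ and $N_\kappa=N$ for all $\kappa$, so their laws all coincide. Here I would appeal to the refinements of the representation theorem in \cite[Corollary 2]{Motyl2013} and \cite[Theorem $D1$]{erika2014}, which are tailored to this setting and allow one to choose the representatives so that the components whose distributions do not vary with $\kappa$ are realised as a single fixed pair on the new space. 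Securing (c) is the main obstacle, since it goes beyond the bare Skorokhod conclusion and must be extracted from the structure of the driving noise; once it is in hand, the limit pair $(W_*, N_*)$ is automatically a cylindrical Wiener process and a time-homogeneous Poisson random measure with intensity $m({\rm d}z)$ on $(\bar{\Omega}, \bar{\mathcal{F}}, \bar{\mathbb{P}})$, which is exactly what the subsequent passage to the limit and identification of the weak solution will rely upon.
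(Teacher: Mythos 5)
Your proposal is correct and follows essentially the same route as the paper: joint tightness on $\mathcal{X}_T$ from Lemmas \ref{lem:tightness}--\ref{lem:tightness-control} together with the tightness of $\{\mathcal{L}(W_\kappa)\}$ and $\{\mathcal{L}(N_\kappa)\}$, followed by an application of Jakubowski's version of the Skorokhod theorem in the refined forms of \cite[Corollary 2]{Motyl2013} and \cite[Theorem D1]{erika2014}, the latter supplying assertion (c) for the components that are constant in $\kappa$. Your additional verification that $\mathcal{X}_T$ admits a countable separating family of continuous functions is a detail the paper leaves implicit but is consistent with its cited sources.
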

Moreover, there exists a sequence of  perfect functions 
$\phi_{\kappa}:\bar{\Omega}\to\Omega$ such that
\begin{align}
  {u}_{\kappa}^*=\tilde{u}_{\kappa}\circ\phi_{\kappa}\,, \quad U_{\kappa}^*= U_{\kappa} \circ \phi_{\kappa}\,, \quad \mathbb{P}=\bar{\mathbb{P}}\circ \phi_{\kappa}^{-1}\,; \label{eq:perfect-function}
 \end{align}
 see e.g., \cite[Theorem $1.10.4$ \&\ Addendum $1.10.5$]{wellner}. Furthermore, by \cite[Section 9]{erika2014} we infer that $N_{\kappa}^*$ and $N_*$ are time-homogeneous Poisson random measures on ${\tt E}$ with the intensity measure $m({\rm d}z)$, and $W_{\kappa}^*, \kappa >0$ and $W_*$ are $L^2$-valued cylindrical Wiener processes over the stochastic basis
$(\bar{\Omega}, \bar{\mathcal{F}}, \bar{\mathbb{P}},\bar{\mathbb{F}})$, where  the  filtration
 $\bar{\mathbb{F}}:= \big( \bar{\mathcal{F}}_t\big)_{t\in [0,T]}$ is defined by
\begin{align*}
 \bar{\mathcal{F}}_t:= \sigma\big\{({u}_{\kappa}^*(s), U_{\kappa}^*(s),\, W_{\kappa}^*(s),\,N_{\kappa}^*(s),\, {u}_*(s), U_*(s),W_*(s), N_*(s)): 0\le s\le t\big\}, \quad t\in [0,T]. 
\end{align*}
With the help of Proposition \ref{prop:skorokhod-representation} and \eqref{eq:perfect-function}, we define the following stochastic processes on the stochastic basis $(\bar{\Omega}, \bar{\mathcal{F}}, \bar{\mathbb{P}}, \bar{\mathbb{F}})$:
\begin{equation}\label{defi:discrete-variables-new-prob-space}
 \begin{aligned}
  v_k:&=\hat{u}_k\circ \phi_{\kappa}\,, \quad  \bar{U}_k :=U_k \circ \phi_{\kappa}, \quad k=0,1,\cdots, N, \\
   v_{\kappa}(t):&= \sum_{k=0}^{N-1}v_{k+1} {\bf 1}_{[t_k, t_{k+1})}(t),~~t\in[0,T), \quad v_{\kappa}(T)= v_N, \\
  \bar{v}_{\kappa}(t):&= \sum_{k=0}^{N-1} v_k {\bf 1}_{(t_k, t_{k+1}]}(t),~~t\in(0,T], \quad \bar{v}_{\kappa}(0)=u_{0,\kappa}\,, \\
   M_{\kappa}^*(t):&= \int_0^t {\tt h}(\bar{v}_{\kappa}(s)) \,dW_{\kappa}^*(s),~~~t\in [0,T]\,.
 \\
  J_{\kappa}^*(t):&= \int_0^t \int_{\tt E} \eta(x,\bar{v}_{\kappa}(s);z) \widetilde{N}_{\kappa}^*({\rm d}z,{\rm d}s),~~~t\in [0,T]\, \\
   \bar{M}_{\kappa}^*(t)&= \sum_{k=0}^{N-1}\Big( \frac{M_{\kappa}^*(t_{k+1})-M_{\kappa}^*(t_k)}{\kappa}(t-t_k) + M_{\kappa}^*(t_k)\Big) 
    {\bf 1}_{[t_k,t_{k+1})}(t),\quad t\in [0,T), \\
 \bar{J}_{\kappa}^*(t)&= \sum_{k=0}^{N-1}\Big( \frac{J_{\kappa}^*(t_{k+1})-J_{\kappa}^*(t_k)}{\kappa}(t-t_k) + J_{\kappa}^*(t_k)\Big) 
    {\bf 1}_{[t_k,t_{k+1})}(t),\quad t\in [0,T), \\
     \bar{M}_{\kappa}^*(T)&= M_{\kappa}^*(T) \,,\quad  \bar{J}_{\kappa}^*(T)= J_{\kappa}^*(T) \,.
 \end{aligned}
\end{equation}
Thanks to \eqref{eq:perfect-function}, \eqref{defi:discrete-variables-new-prob-space} and \eqref{eq:discrete-nonlinear}, one can easily see that $(v_{k})$ satisfies the following scheme:
  for any $0\le k\le N-1$ and $\bar{\mathbb{P}}$ a.s., 
   \begin{align}
   & v_{k+1}- v_k -\kappa\, \mbox{div}_x \big(  {\tt A}(x, v_{k+1},\nabla v_{k+1}) + \vec{F}(v_{k+1})\big) \notag \\
   & = \kappa \bar{U}_k  +   \int_{t_k}^{t_{k+1}}{\tt h}(v_k)\,dW_\kappa^*(s) +  \int_{\tt E}\int_{t_k}^{t_{k+1}}\eta(x, v_k;z) \widetilde{N}_{\kappa}^*({\rm d}z,{\rm d}t). \label{eq:discrete--new-prob-space} 
   \end{align}
  Moreover, $v_k~(k=0,1,\ldots, N)$ also satisfies the estimate \eqref{a-prioriestimate:1}. Furthermore, one has 
     \begin{align*} 
   & u_{\kappa}^*(t)= \sum_{k=0}^{N-1}\Big( \frac{ v_{k+1}-v_k}{\kappa}(t-t_k) + v_k\Big)
   {\bf 1}_{[t_k,t_{k+1})}(t),~~~ t\in [0,T), \quad 
   u_{\kappa}^*(T)= v_N \,. 
  \end{align*}
  Following the same calculations as invoked in the proof of Lemmas \ref{lem:a-priori:1} and \ref{lem:a-priori:2} together with the first part of Proposition \ref{prop:skorokhod-representation}, we derive the following uniform bounds of $u_\kappa^*$:
 \begin{align}\label{a-priori-3}
 \begin{cases}
    \bar{\mathbb{E}}\Big[  \underset{t\in [0,T]}\sup\,\| {u}_{\kappa}^*(t)\|_{L^2}^2\Big] =  \bar{\mathbb{E}}\Big[  \underset{t\in [0,T]}\sup\,\| {v}_{\kappa}(t)\|_{L^2}^2\Big]  \le \displaystyle C\Big( \|u_0\|_{L^2}^2 +  \bar{ \mathbb{E}}\Big[\int_0^T \|U_\kappa^*(s)\|_{L^2}^2\,{\rm d}s\Big]\Big)\,, \\
  \bar{ \mathbb{E}}\Big[ \big\| \bar{v}_{\kappa}(s)\big\|_{L^2(0,T; L^2)}^2 \Big] \le C\,, \quad  \bar{ \mathbb{E}}\Big[ \big\|{u}_{\kappa}^*\big\|_{L^p(0,T; W_0^{1,p})}^p \Big] \le C\,, \\
   \displaystyle  \bar{ \mathbb{E}}\Big[\int_0^T \|u_\kappa^*(t)-v_\kappa(t)\|_{L^2}^2\,{\rm d}t\Big] \le C \kappa\,, \quad  \bar{ \mathbb{E}}\Big[\int_0^T \|u_\kappa^*(t)-\bar{v}_\kappa(t)\|_{L^2}^2\,{\rm d}t\Big] \le C \kappa\,, \\
     \displaystyle  \bar{ \mathbb{E}}\Big[\int_0^T \|U_\kappa^*(s)\|_{L^2}^2\,{\rm d}s\Big] \le C\,, \quad \bar{ \mathbb{E}}\Big[ \big\|{v}_{\kappa}\big\|_{L^p(0,T; W_0^{1,p})}^p \Big] \le C\,, 
  \end{cases}
\end{align}

\begin{lem}\label{lem:conv-martingale-2}
There exists a constant $C>0$, independent of $\kappa$, such that
 \begin{align*}
 \bar{\mathbb{E}}\Big[\int_0^T \|J_{\kappa}^*(t) -\bar{J}_{\kappa}^*(t)\|_{L^2}^2\,{\rm d}t\Big]\le C \kappa\,, \quad   \bar{\mathbb{E}}\Big[\int_0^T \|M_{\kappa}^*(t) -\bar{M}_{\kappa}^*(t)\|_{L^2}^2\,{\rm d}t\Big]\le C \kappa\,.
 \end{align*}
\end{lem}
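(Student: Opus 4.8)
The plan is to work subinterval by subinterval and treat the affine-interpolation error of each stochastic integral directly through the relevant isometry. Fix $k$ and let $t\in[t_k,t_{k+1})$. From the definition of $\bar{M}_\kappa^*$ in \eqref{defi:discrete-variables-new-prob-space} one has
\[
M_\kappa^*(t)-\bar{M}_\kappa^*(t)=\big(M_\kappa^*(t)-M_\kappa^*(t_k)\big)-\frac{t-t_k}{\kappa}\big(M_\kappa^*(t_{k+1})-M_\kappa^*(t_k)\big).
\]
The decisive structural fact is that $\bar{v}_\kappa(s)=v_k$ on $(t_k,t_{k+1}]$, so both increments are stochastic integrals of the single frozen integrand ${\tt h}(v_k)$: namely $M_\kappa^*(t)-M_\kappa^*(t_k)=\int_{t_k}^t {\tt h}(v_k)\,dW_\kappa^*(s)$ and $M_\kappa^*(t_{k+1})-M_\kappa^*(t_k)=\int_{t_k}^{t_{k+1}} {\tt h}(v_k)\,dW_\kappa^*(s)$, with $v_k$ being $\bar{\mathcal F}_{t_k}$-measurable.

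First I would bound the $L^2(D)$-norm by $\|a-\lambda b\|_{L^2}^2\le 2\|a\|_{L^2}^2+2\lambda^2\|b\|_{L^2}^2$ with $\lambda=(t-t_k)/\kappa\le 1$, which sidesteps any cross term. Taking expectation and applying the It\^o isometry to each increment gives
\[
\bar{\mathbb{E}}\big[\|M_\kappa^*(t)-\bar{M}_\kappa^*(t)\|_{L^2}^2\big]\le 2(t-t_k)\,\bar{\mathbb{E}}\big[\|{\tt h}(v_k)\|_{\mathcal{L}_2(L^2)}^2\big]+2\kappa\,\bar{\mathbb{E}}\big[\|{\tt h}(v_k)\|_{\mathcal{L}_2(L^2)}^2\big]\le 4\kappa\,\bar{\mathbb{E}}\big[\|{\tt h}(v_k)\|_{\mathcal{L}_2(L^2)}^2\big].
\]
Using the growth bound in \ref{A4}, $\|{\tt h}(v_k)\|_{\mathcal{L}_2(L^2)}^2\le C_5(|D|+\|v_k\|_{L^2}^2)$, I then integrate in $t$ over $[t_k,t_{k+1})$ (length $\kappa$) and sum over $k$ to obtain
\[
\bar{\mathbb{E}}\Big[\int_0^T\|M_\kappa^*(t)-\bar{M}_\kappa^*(t)\|_{L^2}^2\,{\rm d}t\Big]\le 4\kappa^2\sum_{k=0}^{N-1}\bar{\mathbb{E}}\big[\|{\tt h}(v_k)\|_{\mathcal{L}_2(L^2)}^2\big]\le C\kappa^2 N=C\kappa,
\]
where the last step uses $N=T/\kappa$ together with the uniform bound $\sup_k\bar{\mathbb{E}}\|v_k\|_{L^2}^2\le C$ inherited from \eqref{a-prioriestimate:1}. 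This order-counting, $\kappa^2\cdot N=\kappa T$, is the heart of the matter: the affine error picks up one factor $\kappa$ from the time integration and one more from the isometry, and exactly one of these is consumed by the $N$ summands.

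The jump term is handled identically, with the It\^o--L\'evy isometry in place of the It\^o isometry. On $[t_k,t_{k+1})$,
\[
J_\kappa^*(t)-\bar{J}_\kappa^*(t)=\int_{t_k}^t\int_{\tt E}\eta(x,v_k;z)\,\widetilde{N}_\kappa^*({\rm d}z,{\rm d}s)-\frac{t-t_k}{\kappa}\int_{t_k}^{t_{k+1}}\int_{\tt E}\eta(x,v_k;z)\,\widetilde{N}_\kappa^*({\rm d}z,{\rm d}s),
\]
and the isometry turns each term into $\bar{\mathbb{E}}\big[\int_{t_k}^{t}\int_{\tt E}\|\eta(\cdot,v_k;z)\|_{L^2}^2\,m({\rm d}z)\,{\rm d}s\big]$ (respectively up to $t_{k+1}$). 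By the first bound in \ref{A6}, $\|\eta(\cdot,v_k;z)\|_{L^2}^2\le 2\gamma^2(z)\|g\|_{L^\infty}^2(|D|+\|v_k\|_{L^2}^2)$, so integrating in $z$ produces the finite factor $c_\gamma=\int_{\tt E}\gamma^2(z)\,m({\rm d}z)$. Repeating the same $\lambda\le 1$ splitting, per-interval bound, and order counting as above, again using \eqref{a-prioriestimate:1}, yields $\bar{\mathbb{E}}\big[\int_0^T\|J_\kappa^*(t)-\bar{J}_\kappa^*(t)\|_{L^2}^2\,{\rm d}t\big]\le C\kappa$.

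I do not expect a genuine obstacle here; this is an interpolation-error computation rather than a delicate argument. The only points needing care are the constancy of $\bar{v}_\kappa$ on each subinterval (so that the integrands are frozen and the isometries apply with $v_k$-measurable integrands and independent noise increments), and the bookkeeping $\kappa^2 N=\kappa T$ that upgrades the naive $O(1)$ bound to $O(\kappa)$. The uniform moment estimate $\sup_k\bar{\mathbb{E}}\|v_k\|_{L^2}^2\le C$ from \eqref{a-prioriestimate:1} is precisely what keeps $\sum_k\bar{\mathbb{E}}\|{\tt h}(v_k)\|_{\mathcal{L}_2(L^2)}^2$ of size $O(N)$, closing the estimate.
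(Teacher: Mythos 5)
Your proposal is correct and follows essentially the same route as the paper: split the affine-interpolation error on each subinterval, apply the It\^{o} (resp.\ It\^{o}--L\'{e}vy) isometry to the two increments, invoke the growth bounds in \ref{A4} and \ref{A6}, and recover the factor $\kappa$ from the outer time integration combined with the uniform moment bound \eqref{a-prioriestimate:1}. The only cosmetic difference is that you freeze the integrand as $v_k$ and count $\kappa^{2}N=\kappa T$ explicitly, whereas the paper keeps $\bar{v}_\kappa(s)$ and reassembles the sum into $\bar{\mathbb{E}}\big[\int_0^T\|\bar{v}_\kappa(s)\|_{L^2}^2\,{\rm d}s\big]$; these are the same estimate.
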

\begin{proof}
We will prove the first inequality. The second inequality will follow similarly. 
By using It\^{o}-L\'{e}vy isometry, the assumption \ref{A6},  we have 
 \begin{align*}
   & \bar{\mathbb{E}}\Big[\int_0^T \|J_{\kappa}^*(t) -\bar{J}_{\kappa}^*(t)\|_{L^2}^2\,{\rm d}t\Big] 
 = \sum_{k=0}^{N-1}  \bar{\mathbb{E}}\Big[\int_{t_k}^{t_{k+1}}\|J_{\kappa}^*(t) -\bar{J}_{\kappa}^*(t)\|_{L^2}^2\,{\rm d}t\Big] \\
 &= \sum_{k=0}^{N-1}  \bar{\mathbb{E}}\Big[\int_{t_k}^{t_{k+1}} \Big\| \int_{t_k}^{t} \int_{\tt E} \eta(x,\bar{v}_{\kappa}(s);z)\widetilde{N}_*({\rm d}z,{\rm d}s) - \frac{t-t_k}{\kappa} \int_{t_k}^{t_{k+1}}
  \int_{\tt E} \eta(x,\bar{v}_{\kappa}(s);z)\widetilde{N}_*({\rm d}z,{\rm d}s)\Big\|_{L^2}^2 \,{\rm d}t\Big] \\
 & \le C \sum_{k=0}^{N-1} \int_{t_k}^{t_{k+1}} \bar{\mathbb{E}}\Big[\big\| \int_{t_k}^{t_{k+1}}\int_{\tt E} \eta(x, \bar{v}_{\kappa}(s);z)\widetilde{N}_*({\rm d}z,{\rm d}s)\big\|_{L^2}^2 + \int_{t_k}^{t_{k+1}}\int_{\tt E} \| \eta(x, \bar{v}_{\kappa}(s);z) \big\|_{L^2}^2 m({\rm d}z)\,{\rm d}s
 \Big]\,{\rm d}t \\
 & \le C \sum_{k=0}^{N-1} \int_{t_k}^{t_{k+1}} \bar{\mathbb{E}}\Big[ \int_{t_k}^{t_{k+1}}\int_{\tt E} \Big( \|g\|_{L^2}^2 +  \|g\|_{L^\infty}^2\|\bar{v}_{\kappa}(s)\|_{L^2}^2\Big)\, \gamma^2(z) \,m({\rm d}z)\,{\rm d}s\Big]\,{\rm d}t \\
 & \le C \kappa  \Big( 1 + \bar{\mathbb{E}}\Big[ \int_{0}^{T}\|\bar{v}_{\kappa}(s)\|_{L^2}^2\,{\rm d}s\Big] \Big)  \le C \kappa\,,
 \end{align*}
 where the last inequality follows from \eqref{a-priori-3}\,.
 This completes the proof.
\end{proof}
\subsubsection{\bf Convergence analysis}
  Proposition \ref{prop:skorokhod-representation} gives only  $\bar{\mathbb{P}}$-a.s. convergence of ${u}_{\kappa}^*$. The following Lemma is about the strong convergence of ${u}_{\kappa}^* $---which will play a crucial role in later analysis.
\begin{lem}\label{lem:convergence-1}
There holds the following convergence result.
\begin{itemize}
 \item [(i)] ${u}_{\kappa}^*\goto {u}_*$ in $L^q\big(\bar{\Omega}; L^2(0,T; L^2)\big)$ for all $1\le q<p$. Moreover,  $ v_{\kappa}$ and $ \bar{v}_\kappa $ converge to
 $u_*$ in $L^2\big(\bar{\Omega}; L^2(0,T; L^2)\big)$.
  \item [(ii)] ${u}_{\kappa}^* \stackrel{*}{\rightharpoonup} {u}_*$ in $L_w^2\big(\bar{\Omega}; L^\infty(0,T; L^2)\big)$.
\end{itemize}
\end{lem}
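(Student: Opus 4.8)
The plan is to start from the almost-sure convergence supplied by Proposition \ref{prop:skorokhod-representation}. Since the component $L^2(0,T;L^2)$ of $\mathcal{Z}_T$ carries its strong norm topology, the convergence $(u_\kappa^*,U_\kappa^*,W_\kappa^*,N_\kappa^*)\goto (u_*,U_*,W_*,N_*)$ in $\mathcal{X}_T$ yields in particular $u_\kappa^*\goto u_*$ in $L^2(0,T;L^2)$, $\bar{\mathbb{P}}$-a.s. To upgrade this to convergence in $L^q(\bar{\Omega};L^2(0,T;L^2))$ for every $1\le q<p$, I will invoke Vitali's convergence theorem: it suffices to verify that the family $\big\{\|u_\kappa^*-u_*\|_{L^2(0,T;L^2)}^q\big\}_\kappa$ is uniformly integrable over $\bar{\Omega}$.

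For the uniform integrability I extract a uniform moment bound of order $p$. Because $p>2$ and $D$ is bounded, the embedding $W_0^{1,p}\hookrightarrow L^2$ gives $\|w\|_{L^2}\le C\|w\|_{W_0^{1,p}}$, so that, after integrating in time and applying H\"older's inequality on $(0,T)$,
\[
 \|u_\kappa^*\|_{L^2(0,T;L^2)}^p \le C\,T^{\frac{p-2}{2}}\int_0^T \|u_\kappa^*(t)\|_{L^2}^p\,{\rm d}t \le C\int_0^T \|u_\kappa^*(t)\|_{W_0^{1,p}}^p\,{\rm d}t.
\]
Taking expectations and using $\bar{\mathbb{E}}\big[\|u_\kappa^*\|_{L^p(0,T;W_0^{1,p})}^p\big]\le C$ from \eqref{a-priori-3} gives $\sup_\kappa \bar{\mathbb{E}}\big[\|u_\kappa^*\|_{L^2(0,T;L^2)}^p\big]\le C$. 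Since $u_\kappa^*\goto u_*$ a.s. in $L^2(0,T;L^2)$, Fatou's lemma yields the same bound for $u_*$, whence $\sup_\kappa \bar{\mathbb{E}}\big[\|u_\kappa^*-u_*\|_{L^2(0,T;L^2)}^p\big]\le C$. As $q<p$ we have $p/q>1$, so $\big\{\|u_\kappa^*-u_*\|_{L^2(0,T;L^2)}^q\big\}$ is bounded in $L^{p/q}(\bar{\Omega})$ and therefore uniformly integrable; Vitali's theorem then delivers (i). Taking $q=2$ and combining the resulting convergence $u_\kappa^*\goto u_*$ in $L^2(\bar{\Omega};L^2(0,T;L^2))$ with the estimates $\bar{\mathbb{E}}\big[\int_0^T\|u_\kappa^*-v_\kappa\|_{L^2}^2\,{\rm d}t\big]\le C\kappa$ and $\bar{\mathbb{E}}\big[\int_0^T\|u_\kappa^*-\bar{v}_\kappa\|_{L^2}^2\,{\rm d}t\big]\le C\kappa$ from \eqref{a-priori-3}, the triangle inequality shows $v_\kappa\goto u_*$ and $\bar{v}_\kappa\goto u_*$ in $L^2(\bar{\Omega};L^2(0,T;L^2))$.

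For (ii), the estimate $\bar{\mathbb{E}}\big[\sup_{t}\|u_\kappa^*(t)\|_{L^2}^2\big]\le C$ in \eqref{a-priori-3} says that $\{u_\kappa^*\}$ is bounded in $L^2(\bar{\Omega};L^\infty(0,T;L^2))$, which I identify (in the standard weak-$*$ measurable sense, hence the notation $L_w^2$) with the dual of the separable space $L^2(\bar{\Omega};L^1(0,T;L^2))$. By the Banach--Alaoglu theorem a subsequence converges weak-$*$ to some $\xi$. To identify $\xi$, I test against $\phi\in L^2(\bar{\Omega};L^2(0,T;L^2))$, which lies in the predual since $\|\phi(\bar{\omega})\|_{L^1(0,T;L^2)}\le \sqrt{T}\,\|\phi(\bar{\omega})\|_{L^2(0,T;L^2)}$; the weak-$*$ pairing then converges to $\bar{\mathbb{E}}\big[\int_0^T(\xi,\phi)_{L^2}\,{\rm d}t\big]$, while the strong convergence from (i) forces the same pairing to converge to $\bar{\mathbb{E}}\big[\int_0^T(u_*,\phi)_{L^2}\,{\rm d}t\big]$. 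Since $L^2(\bar{\Omega};L^2(0,T;L^2))$ is dense in $L^2(\bar{\Omega};L^1(0,T;L^2))$, this gives $\xi=u_*$; as every weak-$*$ cluster point of the bounded sequence equals $u_*$, the whole sequence converges weak-$*$ to $u_*$, which is (ii).

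The main obstacle is the moment-upgrading step in (i): the Skorokhod construction only delivers qualitative a.s. convergence, and turning it into $L^q$-convergence up to the critical exponent $q<p$ rests on locating precisely the order-$p$ uniform moment hidden in the $W_0^{1,p}$-estimate of \eqref{a-priori-3} (through $W_0^{1,p}\hookrightarrow L^2$) so that the uniform-integrability hypothesis of Vitali's theorem is met. The duality identification $L_w^2(\bar{\Omega};L^\infty(0,T;L^2))=\big(L^2(\bar{\Omega};L^1(0,T;L^2))\big)^*$ underlying (ii), together with the density used to pin down the weak-$*$ limit, is the other point requiring care.
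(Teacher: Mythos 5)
Your proposal is correct and follows essentially the same route as the paper: almost-sure convergence from the Skorokhod representation upgraded via uniform $L^p(\bar{\Omega};L^2(0,T;L^2))$-boundedness and Vitali's theorem for (i), the $O(\kappa)$ estimates of \eqref{a-priori-3} for the second assertion, and the duality $L_w^2(\bar{\Omega};L^\infty(0,T;L^2))\cong\big(L^2(\bar{\Omega};L^1(0,T;L^2))\big)^*$ with Banach--Alaoglu and a density argument for (ii). You merely make explicit the derivation of the order-$p$ moment bound (via $W_0^{1,p}\hookrightarrow L^2$ and H\"older in time) that the paper asserts without detail and otherwise delegates to the cited Lemma 3.9 of \cite{Majee2020}.
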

\begin{proof}
We follow a similar lines of arguments reported in \cite[Lemma $3.9$]{Majee2020}. Since the sequence $\{{u}_{\kappa}^*\}$ is uniformly bounded in $L^p(\bar{\Omega}; L^2(0,T; L^2))$, it is equi-integrable in $L^q(\bar{\Omega}; L^2(0,T; L^2))$ for all $1\le q<p$, and therefore, by employing Vitali convergence theorem along with the fact that $\bar{\mathbb{P}}$-a.s., ${u}_{\kappa}^* \goto {u}_*$ in $L^2(0,T;L^2)$
(cf.~ ${u}_{\kappa}^* \goto {u}_*$ in $\mathcal{Z}_T$), we arrive at the first part of ${\rm i)}$.  In particular, since $p>2$, we see that ${u}_{\kappa}^* \goto {u}_*$ in $L^2\big(\bar{\Omega}; L^2(0,T; L^2)\big)$.  Hence the second assertion of ${\rm i)}$ follows from \eqref{a-priori-3}.
The assertion ${\rm ii)}$ follows from \cite[ $iii)$, Lemma $3.9$]{Majee2020}. 
\end{proof}
Regarding the stochastic integral, we have the following lemma. 
\begin{lem}\label{lem:conv-martingale-1}
For all $\phi\in W_0^{1,p}$, there holds
 \begin{align*}
 \lim_{\kappa \goto 0}& \bar{\mathbb{E}}\Big[\int_0^T \Big|\int_0^t \int_{\tt E} \big\langle \eta(\cdot, \bar{v}_{\kappa}(s);z)- \eta(\cdot, u_*(s-);z),
 \phi\big\rangle \widetilde{N}_*({\rm d}z,{\rm d}s)\Big|^2\,{\rm d}t\Big]=0\,,  \notag \\
  \lim_{\kappa \goto 0}& \bar{\mathbb{E}}\Big[\int_0^T \Big| \big\langle \int_0^t \big({\tt h}(\bar{v}_{\kappa}(s))- {\tt h}(u_*(s))\big)\,d{W}_*(s),
 \phi\big\rangle \Big|^2\,{\rm d}t\Big]=0\,.
\end{align*}
\end{lem}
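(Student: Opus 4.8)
The plan is to prove both limits by the same mechanism: apply the relevant isometry to reduce each stochastic integral to a Bochner integral of the difference of the coefficients, control that difference by the Lipschitz assumptions \ref{A6} and \ref{A4}, and then invoke the strong convergence $\bar{v}_\kappa \goto u_*$ in $L^2(\bar\Omega; L^2(0,T;L^2))$ supplied by Lemma \ref{lem:convergence-1}~(i). Throughout I would use that both $\eta(\cdot,\cdot;z)$ and the stochastic integrals take values in $L^2$, so that for $\phi\in W_0^{1,p}\hookrightarrow L^2$ the duality pairing $\langle\cdot,\phi\rangle$ coincides with the $L^2$ inner product and is estimated by Cauchy--Schwarz. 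I will carry out the first (jump) limit in detail; the second is entirely analogous.

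For the jump integral, I would first fix $t\in[0,T]$ and apply the It\^o--L\'evy isometry, which is legitimate since $\bar{v}_\kappa$ and $u_*$ are predictable and square-integrable by \eqref{a-priori-3}. This gives
\begin{align*}
\bar{\mathbb{E}}\Big[\Big|\int_0^t\int_{\tt E}\big\langle\eta(\cdot,\bar{v}_\kappa(s);z)-\eta(\cdot,u_*(s-);z),\phi\big\rangle\widetilde N_*({\rm d}z,{\rm d}s)\Big|^2\Big]
=\bar{\mathbb{E}}\Big[\int_0^t\int_{\tt E}\big|\big\langle\eta(\cdot,\bar{v}_\kappa(s);z)-\eta(\cdot,u_*(s-);z),\phi\big\rangle\big|^2 m({\rm d}z)\,{\rm d}s\Big].
\end{align*}
By Cauchy--Schwarz and the Lipschitz bound in \ref{A6} (taken with $x=y$, so the spatial term drops), the integrand is dominated by $(\lambda^*)^2\gamma^2(z)\|\phi\|_{L^2}^2\|\bar{v}_\kappa(s)-u_*(s-)\|_{L^2}^2$; integrating in $z$ produces the factor $c_\gamma=\int_{\tt E}\gamma^2(z)\,m({\rm d}z)$. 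Integrating over $t\in[0,T]$, using Fubini, and replacing $u_*(s-)$ by $u_*(s)$ (a c\`adl\`ag path jumps at most countably often, so these agree for a.e.\ $s$), I would obtain
\begin{align*}
\bar{\mathbb{E}}\Big[\int_0^T\Big|\int_0^t\int_{\tt E}\big\langle\eta(\cdot,\bar{v}_\kappa(s);z)-\eta(\cdot,u_*(s-);z),\phi\big\rangle\widetilde N_*({\rm d}z,{\rm d}s)\Big|^2{\rm d}t\Big]
\le T(\lambda^*)^2 c_\gamma\|\phi\|_{L^2}^2\,\bar{\mathbb{E}}\Big[\int_0^T\|\bar{v}_\kappa(s)-u_*(s)\|_{L^2}^2{\rm d}s\Big],
\end{align*}
whose right-hand side tends to $0$ as $\kappa\goto0$ by Lemma \ref{lem:convergence-1}~(i).

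For the Brownian integral the only additional point I must handle is the cylindrical structure of $W_*$. Writing $G(s):={\tt h}(\bar{v}_\kappa(s))-{\tt h}(u_*(s))$ and pairing the $L^2$-valued integral with $\phi$ turns it into a sum of scalar It\^o integrals against the independent $\beta_n$, so the It\^o isometry yields
\begin{align*}
\bar{\mathbb{E}}\Big[\Big|\big\langle\int_0^t G(s)\,{\rm d}W_*(s),\phi\big\rangle\Big|^2\Big]
=\bar{\mathbb{E}}\Big[\int_0^t\|G(s)^*\phi\|_{L^2}^2\,{\rm d}s\Big]
\le\|\phi\|_{L^2}^2\,\bar{\mathbb{E}}\Big[\int_0^t\|G(s)\|_{\mathcal{L}_2(L^2)}^2{\rm d}s\Big],
\end{align*}
the last inequality being Cauchy--Schwarz/Bessel since $\|G(s)^*\phi\|_{L^2}^2=\sum_n|\langle\phi,G(s)e_n\rangle|^2\le\|\phi\|_{L^2}^2\|G(s)\|_{\mathcal{L}_2(L^2)}^2$. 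The Lipschitz bound in \ref{A4} gives $\|G(s)\|_{\mathcal{L}_2(L^2)}^2\le L_\sigma\|\bar{v}_\kappa(s)-u_*(s)\|_{L^2}^2$, so integrating over $t$ bounds the quantity by $T L_\sigma\|\phi\|_{L^2}^2\,\bar{\mathbb{E}}\big[\int_0^T\|\bar{v}_\kappa(s)-u_*(s)\|_{L^2}^2{\rm d}s\big]$, which again vanishes by Lemma \ref{lem:convergence-1}~(i).

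The computation is essentially routine once the isometries are in place; the one genuinely nontrivial ingredient is the strong $L^2$-convergence of $\bar{v}_\kappa$, which is already established in Lemma \ref{lem:convergence-1}~(i) (resting on the uniform $L^p$-bound of \eqref{a-priori-3} together with Vitali's theorem). The remaining care points --- checking predictability and square-integrability so that the isometries apply, and substituting the predictable left-limit $u_*(s-)$ by $u_*(s)$ under the Lebesgue integral --- are minor and handled as indicated above.
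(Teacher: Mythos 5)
Your proposal is correct and follows essentially the same route as the paper: It\^{o}--L\'{e}vy/It\^{o} isometry, Cauchy--Schwarz against $\phi\in L^2$, the Lipschitz bounds of \ref{A6} and \ref{A4}, and the strong convergence $\bar{v}_\kappa \goto u_*$ in $L^2(\bar{\Omega};L^2(0,T;L^2))$ from Lemma \ref{lem:convergence-1}(i). The only cosmetic difference is that the paper outsources the jump part to \cite[Lemma 3.10]{Majee2020} while writing out only the Wiener part, whereas you carry out the jump computation explicitly --- which is exactly the argument behind the cited lemma.
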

\begin{proof} 
The first assertion follows from \cite[Lemma $3.10$]{Majee2020}. Since $W_{\kappa}^*=W_*$ is a cylindrical Winner noise on $L^2$, by using \ref{A4} and It\^{o}-isometry together with the second part of ${\rm (i)}$, Lemma \ref{lem:convergence-1}, we get, for any $\phi\in L^2$,
\begin{align*}
& \bar{\mathbb{E}}\Big[\int_0^T \Big| \big\langle \int_0^t \big({\tt h}(\bar{v}_{\kappa}(s))- {\tt h}(u_*(s))\big)\,d{W}_*(s),
 \phi\big\rangle \Big|^2\,{\rm d}t\Big] \\
 & \le \|\phi\|_{L^2}^2 \bar{\mathbb{E}}\Big[ \int_0^T \int_0^t \int_D \sum_{k\ge 1} \big|{\tt h}_k(\bar{v}_{\kappa}(s))- {\tt h}_k(u_*(s))\big|^2\,{\rm d}s \,{\rm d}x\,{\rm d}t \Big] \\
 & \le C(L_\sigma, T, \phi)\, \bar{\mathbb{E}}\Big[ \int_0^T\| \bar{v}_{\kappa}(s)- v_*(s)\|_{L^2}^2\,{\rm d}s\Big] \goto 0 \quad \text{as}~~\kappa \goto 0. 
\end{align*}
Since $W_0^{1,p}\subset L^2$, the assertion holds true for all $\phi \in W_0^{1,p}$. 
\end{proof}
Similar to \cite[Lemma $3.12$]{Majee2020}, it can be deduce that ${u}_\kappa^*(0) \rightharpoonup {u}_*(0)$ in $L^2$. Moreover, by using Lemmas \ref{lem:conv-martingale-2}, \ref{lem:convergence-1} and \ref{lem:conv-martingale-1} together with  \eqref{a-priori-3},
one arrives at the following lemma.
\begin{lem} \label{lem:convergence-allterm}
For all $\phi\in W_0^{1,p}$, the following convergence results hold.
 \begin{align*}
 &\lim_{\kappa\goto 0} \bar{\mathbb{E}}\Big[\big|\big({u}_{\kappa}^*(0)-{u}_*(0),\phi\big)_{L^2}\big|\Big]=0\,, \\
 & \lim_{\kappa \goto 0} \bar{\mathbb{E}}\Big[ \int_0^T \Big| \int_0^t \big\langle {\rm div}_x \big( \vec{F}(\bar{v}_{\kappa}(s))- \vec{F}(u_*(s))\big), \phi\big\rangle\, {\rm d}s \Big|\,{\rm d}t\Big]=0\,, \notag \\
  & \lim_{\kappa \goto 0} \bar{\mathbb{E}}\Big[ \int_0^T \Big|\big\langle \bar{M}_{\kappa}^*(t), \phi \big\rangle- \Big\langle \int_0^t  {\tt h}(u_*(s,\cdot)) \,d{W}_*(s), \phi \Big\rangle\Big|\,{\rm d}t\Big]=0\,,\notag \\
 & \lim_{\kappa \goto 0} \bar{\mathbb{E}}\Big[ \int_0^T \Big|\big\langle  \bar{J}_{\kappa}^*(t), \phi \big\rangle- \Big\langle \int_0^t \int_{\tt E} \eta(\cdot,u_*(s-,\cdot);z) \widetilde{N}_*({\rm d}z,{\rm d}s),
  \phi \Big\rangle\Big|\,{\rm d}t\Big]=0\,. \notag
  \end{align*}
\end{lem}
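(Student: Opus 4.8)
The plan is to treat the four convergences one at a time, since each isolates a single term of the scheme, and to reduce every statement to an already-established estimate via Cauchy--Schwarz (both in $\bar{\Omega}$ and in time) together with the continuous embedding $W_0^{1,p}\hookrightarrow L^2$ valid on the bounded domain $D$ for $p>2$. This embedding lets me read each duality pairing $\langle\cdot,\phi\rangle$ as an $L^2$-inner product controlled by $\|\phi\|_{L^2}$ (or by $\|\grad\phi\|_{L^2}$ after an integration by parts), so that all four limits follow from the three reference lemmas quoted before the statement.

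For the first assertion I would use that $u_\kappa^*(0)=u_{0,\kappa}$ is \emph{deterministic} with $u_{0,\kappa}\goto u_0$ strongly in $L^2$ by \eqref{esti:initial-approx}; identifying $u_*(0)$ with this limit (the weak convergence $u_\kappa^*(0)\rightharpoonup u_*(0)$ recorded just before the lemma), we get $(u_\kappa^*(0)-u_*(0),\phi)_{L^2}\goto 0$, and since the integrand is deterministic its expectation converges as well. For the drift term I would integrate by parts to write $\langle \mathrm{div}_x(\vec F(\bar v_\kappa)-\vec F(u_*)),\phi\rangle=-\int_D(\vec F(\bar v_\kappa)-\vec F(u_*))\cdot\grad\phi\,{\rm d}x$, bound this by assumption \ref{A3} (Lipschitzness of $\vec F$, with Lipschitz constant $L_F$) as $L_F\|\grad\phi\|_{L^2}\|\bar v_\kappa-u_*\|_{L^2}$, and then estimate
\[
\bar{\mathbb{E}}\Big[\int_0^T\Big|\int_0^t\langle\cdots\rangle\,{\rm d}s\Big|\,{\rm d}t\Big]\le C(T,\phi)\,\bar{\mathbb{E}}\Big[\Big(\int_0^T\|\bar v_\kappa-u_*\|_{L^2}^2\,{\rm d}s\Big)^{1/2}\Big],
\]
which tends to $0$ by the second part of Lemma \ref{lem:convergence-1}, namely $\bar v_\kappa\goto u_*$ in $L^2(\bar{\Omega};L^2(0,T;L^2))$, combined with Jensen's inequality.

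For the two stochastic terms the key device is a triangle-inequality split into an \emph{interpolation error} and a \emph{genuine convergence}. Writing $\bar M_\kappa^*(t)-\int_0^t{\tt h}(u_*)\,{\rm d}W_*=\big(\bar M_\kappa^*(t)-M_\kappa^*(t)\big)+\int_0^t\big({\tt h}(\bar v_\kappa)-{\tt h}(u_*)\big)\,{\rm d}W_*$ (using $W_\kappa^*=W_*$), I would control the first bracket through Lemma \ref{lem:conv-martingale-2}: its $O(\kappa)$ bound on $\bar{\mathbb{E}}\big[\int_0^T\|M_\kappa^*-\bar M_\kappa^*\|_{L^2}^2\,{\rm d}t\big]$ gives, after $\langle\cdot,\phi\rangle\le\|\cdot\|_{L^2}\|\phi\|_{L^2}$ and Cauchy--Schwarz in $t$, a bound of order $\kappa^{1/2}$. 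The second bracket is exactly the quantity shown to vanish in the second assertion of Lemma \ref{lem:conv-martingale-1}, from which the $L^1$-in-$(t,\bar\omega)$ convergence follows by one further Cauchy--Schwarz. The jump term is handled identically: split $\bar J_\kappa^*-\int_0^t\!\int_{\tt E}\eta(\cdot,u_*(s-);z)\,\widetilde N_*$ into the interpolation error bounded by Lemma \ref{lem:conv-martingale-2} and the integrand-convergence piece bounded by the first assertion of Lemma \ref{lem:conv-martingale-1}.

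The main obstacle, I expect, is the jump term: one must be careful that the relevant integrand is the left-continuous step process $\bar v_\kappa$ and that its limit is the predictable version $u_*(s-)$, so that the comparison with the limiting integral is legitimate; this is precisely where Lemma \ref{lem:conv-martingale-1} (built on the It\^o--L\'evy isometry and the contraction bound \ref{A6} on $\eta$) does the real analytic work. Once the three reference lemmas are in place, no new estimate is needed, and the whole argument is an assembly via Cauchy--Schwarz; the only genuine care is to keep the interpolation-versus-limit decomposition clean and uniform in $\phi\in W_0^{1,p}$.
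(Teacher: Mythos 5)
Your proposal is correct and matches the paper's (largely delegated) argument: the paper proves this lemma exactly by assembling Lemmas \ref{lem:conv-martingale-2}, \ref{lem:convergence-1} and \ref{lem:conv-martingale-1} with the bounds \eqref{a-priori-3}, and your split of each stochastic term into an interpolation error $\bar M_\kappa^*-M_\kappa^*$ (resp.\ $\bar J_\kappa^*-J_\kappa^*$) of order $\kappa^{1/2}$ plus an integrand-convergence piece is precisely the intended decomposition. The only cosmetic difference is the first assertion, where the paper invokes the weak convergence $u_\kappa^*(0)\rightharpoonup u_*(0)$ via \cite[Lemma 3.12]{Majee2020} (c\`adl\`ag convergence at $t=0$ plus Vitali) while you exploit that $u_\kappa^*(0)=u_{0,\kappa}$ is deterministic and converges strongly; both are valid and yield the same conclusion.
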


Since $\bar{\mathbb{P}}$-a.s., $U_\kappa^* \rightharpoonup U_*$ in $L^2(0,T; L^2)$, by recalling the estimate of $U_\kappa^*$ as in \eqref{a-priori-3}, we have, after applying dominated convergence theorem
\begin{align}
 \lim_{\kappa \goto 0} \bar{\mathbb{E}}\Big[ \int_0^T \Big| \int_0^t \big\langle  U_\kappa^*(s)-U_*(s), 
 \phi\big\rangle\, {\rm d}s \Big|\,{\rm d}t\Big]=0 , \quad \forall~\phi \in W_{0}^{1,p}\,.\label{eq:limit-control}
\end{align}

\begin{lem}\label{lem:converegnce-operator-term}
There exists $G\in L^{p^\prime}(\bar{\Omega}\times D_T)^d$ such that for any $\phi\in W_0^{1,p}$
 \begin{align}
 \lim_{\kappa \goto 0} \bar{\mathbb{E}}\Big[ \int_0^T \Big| \int_0^t \big\langle {\rm div}_x  \big( {\tt A}(x, v_\kappa(s),\grad v_{\kappa}(s))  - G(s)\big), 
 \phi\big\rangle\, {\rm d}s \Big|\,{\rm d}t\Big]=0\,.\label{eq:limit-3}
\end{align}
\end{lem}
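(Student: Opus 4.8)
The plan is to first show that the nonlinear flux $\{{\tt A}(\cdot, v_\kappa, \nabla v_\kappa)\}_{\kappa>0}$ is bounded in $L^{p^\prime}(\bar{\Omega}\times D_T)^d$, extract its weak limit $G$, and then pass to the limit in the time-integrated dual pairing. Throughout I fix $\phi\in W_0^{1,p}$ and use that, since $\phi$ vanishes on $\partial D$, the pairing unwinds to $\big\langle {\rm div}_x\big({\tt A}(\cdot, v_\kappa,\nabla v_\kappa)-G\big),\phi\big\rangle=-\int_D\big({\tt A}(\cdot, v_\kappa,\nabla v_\kappa)-G\big)\cdot\nabla\phi\,{\rm d}x$, so that the object to be controlled is the time primitive of the scalar process $R_\kappa(s):=-\int_D\big({\tt A}(\cdot, v_\kappa(s),\nabla v_\kappa(s))-G(s)\big)\cdot\nabla\phi\,{\rm d}x$.

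For the boundedness I would invoke the growth bound \ref{A2}~${\rm (iii)}$. Raising it to the power $p^\prime$ and using the crucial identity $(p-1)p^\prime=p$ gives $|{\tt A}(\cdot, v_\kappa,\nabla v_\kappa)|^{p^\prime}\le C\big(|\nabla v_\kappa|^{p}+|v_\kappa|^{p}+K_2^{p^\prime}\big)$. Integrating over $\bar{\Omega}\times D_T$, the first term is controlled by $\bar{\mathbb{E}}\big[\|v_\kappa\|_{L^p(0,T;W_0^{1,p})}^p\big]\le C$ from \eqref{a-priori-3}, the second by the Poincar\'e embedding $W_0^{1,p}\hookrightarrow L^p$ together with the same estimate, and the third is a constant since $K_2\in L^{p^\prime}$. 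Hence $\{{\tt A}(\cdot, v_\kappa,\nabla v_\kappa)\}$ is bounded in $L^{p^\prime}(\bar{\Omega}\times D_T)^d$; as $p>2$ forces $1<p^\prime<2$, this space is reflexive, and I can extract a (not relabelled) subsequence and a limit $G\in L^{p^\prime}(\bar{\Omega}\times D_T)^d$ with ${\tt A}(\cdot, v_\kappa,\nabla v_\kappa)\rightharpoonup G$.

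With $G$ in hand, weak convergence already yields $R_\kappa\rightharpoonup 0$ in $L^{p^\prime}(\bar{\Omega}\times(0,T))$: testing against products $\psi(\bar\omega)h(s)$, i.e. against $\Phi(\bar\omega,s,x)=\psi(\bar\omega)h(s)\nabla\phi(x)\in L^p(\bar{\Omega}\times D_T)^d$, and using density of such products, gives $\bar{\mathbb{E}}\int_0^T R_\kappa\,\xi\,{\rm d}s\to 0$ for every $\xi\in L^p(\bar{\Omega}\times(0,T))$. A Fubini step then transfers the primitive $\int_0^t(\cdot)\,{\rm d}s$ onto the test function (replacing $h$ by $H(s)=\int_s^T h(t)\,{\rm d}t$), which shows that $Y_\kappa(t):=\int_0^t R_\kappa(s)\,{\rm d}s$ also converges weakly to $0$ in $L^{p^\prime}(\bar{\Omega}\times(0,T))$; moreover H\"older's inequality supplies the uniform equicontinuity $|Y_\kappa(t)-Y_\kappa(\tau)|\le |t-\tau|^{1/p}\,\|R_\kappa\|_{L^{p^\prime}(0,T)}$.

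The main obstacle is that the statement carries the absolute value inside $\bar{\mathbb{E}}\int_0^T(\cdot)\,{\rm d}t$, so I in fact need the \emph{strong} convergence $Y_\kappa\to 0$ in $L^1(\bar{\Omega}\times(0,T))$, and this does \emph{not} follow from weak convergence of the nonlinear flux alone: a flux oscillating only in the $\bar\omega$-variable (a Rademacher-type factor) satisfies everything above yet violates the claim. The ingredient that rescues it is the pathwise information from Proposition \ref{prop:skorokhod-representation}: $v_\kappa\to u_*$ $\bar{\mathbb{P}}$-a.s. in $\mathcal{Z}_T$, hence $\bar{\mathbb{P}}$-a.s. $\nabla v_\kappa\rightharpoonup\nabla u_*$ in $L^2_w(0,T;W^{1,2})$, which pins down the randomness and rules out such $\bar\omega$-oscillation. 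I would therefore combine this pathwise convergence with the equi-integrability furnished by the $L^{p^\prime}$-bound ($p^\prime>1$) and Vitali's convergence theorem to upgrade the weak convergence to the required $L^1(\bar{\Omega}\times(0,T))$ convergence. I stress that $G$ is kept here as an abstract weak limit; its identification with ${\tt A}(\cdot,u_*,\nabla u_*)$ is the separate Minty-monotonicity argument carried out afterwards and is not part of this lemma.
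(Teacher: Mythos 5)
Your first two steps --- the growth bound from \ref{A2}\,(iii) (the paper's proof cites (ii), but the boundedness condition (iii) is what is actually used, as you have it), the uniform bound via \eqref{a-priori-3}, and the extraction of a weak limit $G$ by reflexivity of $L^{p^\prime}(\bar{\Omega}\times D_T)^d$ --- are exactly the paper's proof; the paper then simply asserts \eqref{eq:limit-3} and stops. So the obstruction you isolate is real and is not addressed in the paper either: with the absolute value inside $\bar{\mathbb{E}}\int_0^T(\cdot)\,{\rm d}t$, the claim is \emph{strong} $L^1(\bar{\Omega}\times(0,T))$ convergence of $Y_\kappa(t)=\int_0^t\big\langle{\rm div}_x\big({\tt A}(\cdot,v_\kappa,\grad v_\kappa)-G\big),\phi\big\rangle\,{\rm d}s$ to $0$, whereas weak convergence of the flux only yields $Y_\kappa\rightharpoonup 0$; your Rademacher example is a correct witness that this implication fails in general.

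However, your proposed repair does not close the gap. Vitali requires $Y_\kappa\goto 0$ a.e.\ (or in measure), i.e.\ that $\bar{\mathbb{P}}$-a.s.\ the flux ${\tt A}(\cdot,v_\kappa,\grad v_\kappa)$ converges weakly in $L^{p^\prime}(D_T)^d$ to the \emph{same} $G(\bar\omega,\cdot)$. The pathwise information from Proposition \ref{prop:skorokhod-representation} only gives $\grad v_\kappa\rightharpoonup\grad u_*$ $\bar{\mathbb{P}}$-a.s.; this does not pass through the nonlinear map ${\tt A}$, and it does not identify the $\bar\omega$-wise weak limit points of the flux with the global weak limit $G$ --- that identification is precisely the Minty argument, which is carried out only \emph{after} this lemma and which you have explicitly excluded from its scope. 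What actually suffices for the downstream use in {\bf Step (i)} is the duality form of \eqref{eq:limit-3}: for every $\psi\in L^\infty(\bar{\Omega}\times(0,T))$ one has $\bar{\mathbb{E}}\big[\int_0^T\psi(t)\int_0^t\langle\cdots\rangle\,{\rm d}s\,{\rm d}t\big]\goto 0$, which follows from your Fubini step by testing the flux against $\grad\phi(x)\int_s^T\psi(\bar\omega,t)\,{\rm d}t\in L^p(\bar{\Omega}\times D_T)^d$, and which is enough to conclude $(u_*(t),\phi)_{L^2}=\mathcal{R}_*(t)$ for a.e.\ $(\bar\omega,t)$. In short: your construction of $G$ coincides with the paper's; your criticism of the absolute-value formulation is warranted and applies equally to the paper's own one-line justification; but your Vitali-based fix is not complete, and the honest resolution is to weaken the statement to the tested (no absolute value) form, which is all that is used.
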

\begin{proof} Note that, thanks to the assumption \ref{A2},${\rm ii)}$, 
\begin{align*}
\|{\tt A}(x, v_\kappa(s),\grad v_{\kappa}(s)) \|_{L^{p^\prime}}^{p^\prime} \le C\big( \|K_2\|_{L^{p^\prime}}^{p^\prime} + \|v_\kappa(s)\|_{W_0^{1,p}}^{p}\Big)\,.
\end{align*}
Using \eqref{a-priori-3}, we see from the above inequality that the sequence 
$\big\{{\tt A}(x, v_\kappa(s),\nabla v_\kappa)\big\}$ is uniformly bounded in $L^{p^\prime}(\bar{\Omega}\times D_T)^d $ and hence 
 ${\tt A}(x, v_\kappa, \nabla v_\kappa) \rightharpoonup G $ in $L^{p^\prime}(\bar{\Omega}\times D_T)^d $ for some $G \in L^{p^\prime}(\bar{\Omega}\times D_T)^d $. 
 Thus for any $\phi\in W_0^{1,p}$, one has 
\begin{align*}
 \lim_{\kappa \goto 0} \bar{\mathbb{E}}\Big[ \int_0^T \Big| \int_0^t \big\langle {\rm div}_x \big( {\tt A}(x, v_\kappa(s),\grad v_{\kappa}(s)
 - G(s)\big), \phi\big\rangle\, {\rm d}s \Big|\,{\rm d}t\Big]=0\,, 
\end{align*}
i.e., \eqref{eq:limit-3} holds true. 
\end{proof}


\subsubsection{\bf Proof of Theorem \ref{thm:existence-weak}}\label{subsec:existence}
We use Lemmas \ref{lem:convergence-1}, \ref{lem:convergence-allterm}-\ref{lem:converegnce-operator-term} to prove Theorem  \ref{thm:existence-weak} in several steps.
\vspace{.1cm}

\noindent{\bf Step ${\rm (i)}$:} Following \cite{Majee2020}, we define the following three functional. For any $\phi\in W_0^{1,p}$,
\begin{equation*}
\begin{aligned}
 \mathcal{R}_{\kappa}(\tilde{{u}}_{\kappa}, U_\kappa, W, \widetilde{N}; \phi): &= \big( \hat{u}_0, \phi\big)_{L^2} + 
 \int_0^t \big\langle \mbox{div}_x {\tt A}(\cdot, u_\kappa(s),\grad u_{\kappa}(s)) , \phi\big\rangle \,{\rm d}s + \int_0^t \big\langle U_\kappa(s), \phi \big\rangle\,{\rm d}s \notag \\
&  + \int_0^t \big\langle \mbox{div}_x \vec{F}(u_\kappa(s)), \phi\big\rangle\,{\rm d}s + \big\langle \tilde{M}_{\kappa}(t), \phi \big\rangle + \big\langle \tilde{J}_{\kappa}(t), \phi \big\rangle\,, \\
\mathcal{R}_{\kappa}^*({u}_{\kappa}^*, U_\kappa^*, W_\kappa^*, \widetilde{N}_\kappa^*; \phi): &= \big( u_\kappa^*(0), \phi\big)_{L^2} + 
 \int_0^t \big\langle \mbox{div}_x {\tt A}(\cdot, v_\kappa(s),\grad v_{\kappa}(s)) , \phi\big\rangle \,{\rm d}s + \int_0^t \big\langle U_\kappa^*(s), \phi \big\rangle\,{\rm d}s \notag \\
&  + \int_0^t \big\langle \mbox{div}_x \vec{F}(v_\kappa(s)), \phi\big\rangle\,{\rm d}s + \big\langle \bar{M}_{\kappa}^*(t), \phi \big\rangle + \big\langle \bar{J}_{\kappa}^*(t), \phi \big\rangle\,, \\ 
 \mathcal{R}_*({u}_*, U_*, W_*, \widetilde{N}_*; \phi): &= \big( {u}_*(0), \phi\big)_{L^2} + \int_0^t \big\langle \mbox{div}_x G(s), \phi\big\rangle \,{\rm d}s
 + \int_0^t \big\langle \mbox{div}_x \vec{F}(u_*(s)), \phi\big\rangle \,{\rm d}s \\
 & \quad  + \int_0^t \big\langle U_*(s), \phi\big\rangle\, {\rm d}s 
 +  \Big\langle \int_0^t  {\tt h}(u_*(s-,\cdot)) \,dW_*(s), \phi \Big\rangle\, \notag \\
& \hspace{2cm} + \Big\langle \int_0^t \int_{\tt E} \eta(\cdot, u_*(s-,\cdot);z) \widetilde{N}_*({\rm d}z,{\rm d}s), \phi \Big\rangle\,.
\end{aligned}
\end{equation*}
Using Lemmas \ref{lem:convergence-allterm} -\ref{lem:converegnce-operator-term}, ${\rm (i)}$, Lemma \ref{lem:convergence-1} together with \eqref{eq:limit-control}, and 
mimicking the arguments of \cite[{\bf Step $i)$}, Proof of Theorem $2.1$]{Majee2020}, we get, $\bar{\mathbb{P}}$-a.s., for a.e. $t\in [0,T]$, and $\phi\in W_0^{1,p}$,
\begin{align}
 \big( {u}_*(t), \phi \big)_{L^2}& = \big( {u}_*(0), \phi \big)_{L^2} +  \int_0^t \big\langle \mbox{div}_x \big( G(s) + \vec{F} (u_*(s))\big), \phi\big\rangle \,{\rm d}s  + \int_0^t  \big\langle  U_*(s), \phi\big\rangle\,{\rm d}s \notag \\
 & \hspace{1cm} + \Big\langle \int_0^t {\tt h}(u_*(s,\cdot)) \,dW_*(s), \phi \Big\rangle   + \Big\langle \int_0^t \int_{\tt E} \eta(\cdot,u_*(s-,\cdot);z) \widetilde{N}_*({\rm d}z,{\rm d}s), \phi \Big\rangle\,. \label{eq:nonlinear-new} 
\end{align}
Moreover, \eqref{eq:nonlinear-new} holds true for all $t\in [0,T]$ and all $\phi \in W_0^{1,p}$ as ${u}_* \in \mathbb{D}([0,T]; L_w^2)$.
\vspace{.2cm}

\noindent{\bf Step ${\rm (ii)}$:} To identify  the function $G \in L^{p^\prime}(\bar{\Omega}\times D_T)^d$, in this step, we follow the ideas from \cite{Majee2020,Vallet2021}. Using \eqref{eq:identity-0} and taking $ v_{k+1}$ as a test function in \eqref{eq:discrete--new-prob-space}, we have
\begin{align}
& \frac{1}{2} \bar{\mathbb{E}}\Big[ \| v_{k+1}\|_{L^2}^2 - \|v_{k}\|_{L^2}^2 + \|v_{k+1}-v_k\|_{L^2}^2\Big]
 + \kappa\, \bar{\mathbb{E}}\Big[ \int_{D} {\tt A}(x, v_{k+1},\grad v_{k+1})\cdot  \nabla v_{k+1}\,{\rm d}x\Big]\notag  \\
 &\quad  - \bar{\mathbb{E}}\Big[ \int_{D} \int_{t_k}^{t_{k+1}}\bar{U}_{k} v_{k+1}\,{\rm d}s\,{\rm d}x\Big]
  - \bar{\mathbb{E}}\Big[ \Big( \int_{t_k}^{t_{k+1}} {\tt h}(v_k)\,{\rm d}{W}_{\kappa}^*(t),
 v_{k+1}-v_k\Big)_{L^2}\Big] \notag \\
 &  \qquad - \bar{\mathbb{E}}\Big[ \Big( \int_{t_k}^{t_{k+1}}\int_{\tt E} \eta(x, v_k;z)\widetilde{N}_{\kappa}^*({\rm d}z,{\rm d}t),
 v_{k+1}-v_k\Big)_{L^2}\Big] =0\,. \label{eq:discrete-1-new}
\end{align}
Observe that by using Cauchy-Schwartz inequality and  It\^{o}-L\'{e}vy isometry, one has
\begin{align*}
 &  \bar{\mathbb{E}}\Big[ \Big( \int_{t_k}^{t_{k+1}} {\tt h}(v_k)\,d{W}_{\kappa}^*(t),
 v_{k+1}-v_k\Big)_{L^2} + \Big( \int_{t_k}^{t_{k+1}}\int_{\tt E} \eta(x, v_k;z)\widetilde{N}_{\kappa}^*({\rm d}z,{\rm d}t),
 v_{k+1}-v_k\Big)_{L^2}\Big] \notag \\
 & \le  \frac{1}{2}\bar{\mathbb{E}}\Big[ \| v_{k+1}-v_k\|_{L^2}^2\Big]  + \bar{\mathbb{E}}\Big[ 
 \int_{t_k}^{t_{k+1}}\| {\tt h}(v_k)\|_{\mathcal{L}_2(L^2)}^2\,{\rm d}s\Big]  
  + \bar{\mathbb{E}}\Big[ \int_{t_k}^{t_{k+1}}\int_{\tt E} \|\eta(\cdot,v_k;z)\|_{L^2}^2\,m({\rm d}z)\,{\rm d}s\Big]\,.
\end{align*} 
Combining this inequality  in \eqref{eq:discrete-1-new}, and then summing over $k=0,1,\ldots, N-1$ in the resulting inequality, 
we get~(using the fact that $v_N=u_{\kappa}^*(T)$)
\begin{align}
 &\frac{1}{2}\bar{\mathbb{E}}\Big[ \|{u}_{\kappa}^*(T)\|_{L^2}^2\Big] + \bar{\mathbb{E}}\Big[ \int_{D_T}
  {\tt A}(x, v_\kappa(t),\grad v_{\kappa}(t))\cdot \grad v_{\kappa}(t)\,{\rm d}x\,{\rm d}t\Big] 
   -\bar{\mathbb{E}}\Big[ \int_{D_T} U_\kappa^*(s) v_\kappa(s)\,{\rm d}s\,{\rm d}x\Big] \notag \\
  &  -\bar{\mathbb{E}}\Big[ \int_0^T  \|{\tt h}(\bar{v}_{\kappa}(t))\|_{\mathcal{L}_2(L^2)}^2\,{\rm d}t\Big]  -\bar{\mathbb{E}}\Big[ \int_0^T \int_{\tt E} \|\eta(\cdot,\bar{v}_{\kappa}(t);z)\|_{L^2}^2\,m({\rm d}z)\,{\rm d}t\Big]
 \le \frac{1}{2}\|\hat{u}_0\|_{L^2}^2\,.\label{esti:2-new}
\end{align}
An application of It\^{o}-L\'{e}vy formula \cite[similar to Theorem $3.4$]{tudor} to $ \frac{1}{2}\| u_*(t))\|_{2}^2$ in \eqref{eq:nonlinear-new} yields
\begin{align}
 &\frac{1}{2}\bar{\mathbb{E}}\Big[ \| u_*(T)\|_{L^2}^2\Big] + \bar{\mathbb{E}}\Big[ \int_{D_T}
 G \cdot \grad  u_*(s-) \,{\rm d}x\,{\rm d}s\Big] -  \bar{\mathbb{E}}\Big[ \int_{D_T} U_*(s) u_*(s)\,{\rm d}x\,{\rm d}s\Big]\notag \\
 & \hspace{1cm} - \bar{\mathbb{E}}\Big[ \int_0^T \|{\tt h}(u_*(s))\|_{\mathcal{L}_2(L^2)}^2\,{\rm d}s\Big] 
 - \bar{\mathbb{E}}\Big[ \int_0^T\int_{\tt E} \|\eta(\cdot, u_*(s-);z)\|_{L^2}^2\, m({\rm d}z)\,{\rm d}s\Big] 
  = \frac{1}{2} \|u_0\|_{L^2}^2\,.\label{esti:3-new}
\end{align}
We now combine \eqref{esti:2-new} and \eqref{esti:3-new} to obtain
\begin{align}
 & \frac{1}{2}\bar{\mathbb{E}}\Big[ \|{u}_{\kappa}^*(T)\|_{L^2}^2- \| u_*(T)\|_{L^2}^2\Big] 
+ \bar{\mathbb{E}}\Big[ \int_{D_T} {\tt A}(x, v_\kappa(t),\grad v_{\kappa}(t))\cdot \grad v_{\kappa}(t)\,{\rm d}x\,{\rm d}t\Big] \notag \\
 &-\bar{\mathbb{E}}\Big[ \int_{D_T}\Big( U_{\kappa}^*(s) v_{\kappa}(s)- U_*(s) u_*(s) \big)\,{\rm d}x\,{\rm d}s\Big] -\bar{\mathbb{E}}\Big[ \int_0^T  \Big(\|{\tt h}(\bar{v}_{\kappa}(s))\|_{\mathcal{L}_2(L^2)}^2- \|{\tt h}(u_*(s))\|_{\mathcal{L}_2(L^2)}^2\Big)\,{\rm d}s\Big] \notag \\
& \quad -\frac{1}{2}\bar{\mathbb{E}}\Big[ \int_0^T \int_{\tt E} \Big(\|\eta(\bar{v}_{\kappa}(s);z)\|_{L^2}^2- \|\eta(v_*(s-);z)\|_{L^2}^2\Big)\, m({\rm d}z)\,{\rm d}s\Big] \notag \\
&\hspace{2cm} \le  \bar{\mathbb{E}}\Big[ \int_{D_T}G \cdot \grad u_* \,{\rm d}x\,{\rm d}t\Big] + \frac{1}{2} \big( \|\hat{u}_0\|_{L^2}^2-\|u_0\|_{L^2}^2\big)\,. \label{esti:new-3-0}
\end{align}
Note that 
\begin{align*}
 \displaystyle \liminf_{\kappa >0} \bar{\mathbb{E}}\Big[ \|{u}_{\kappa}^*(T)\|_{L^2}^2- \| u_*(T)\|_{L^2}^2\Big] \ge 0\,, \quad \|\hat{u}_0-u_0\|_{L^2}\goto 0~~\text{as}~~\kappa \goto 0.
\end{align*}
Since $\bar{\mathbb{P}}$-a.s.,  $U_\kappa^* \rightharpoonup U_*$ in $L^2(0,T;L^2)$, $v_\kappa \rightarrow u_*$ in $L^2(\bar{\Omega}; L^2(0,T;L^2))$ and 
$ \displaystyle \bar{\mathbb{E}}\Big[\int_0^T \|U_\kappa^*(s)\|_{L^2}^2\,{\rm d}s\Big] \le C$ for some constant $C>0$, independent of $\kappa>0$, one can easily conclude that
\begin{align*}
\bar{\mathbb{E}}\Big[ \int_{D_T}\Big( U_{\kappa}^*(s) v_{\kappa}(s)- U_*(s) u_*(s) \big)\,{\rm d}x\,{\rm d}s\Big]\goto 0 ~~\text{as}~~\kappa \goto 0.
\end{align*}
Observe that, in view of the assumption \ref{A4}
\begin{align*}
& \bar{\mathbb{E}}\Big[\int_0^T\| {\tt h}(\bar{v}_{\kappa}(t))-{\tt h}(u_*(t))\|_{\mathcal{L}_2(L^2)}^2\,{\rm d}t\Big]= 
\bar{\mathbb{E}}\Big[\int_0^T \sum_{n=1}^\infty \| \big({\tt h}(\bar{v}_{\kappa}(t))-{\tt h}(u_*(t))\big)e_n\|_{L^2}^2\,{\rm d}t \Big] \\
&= \bar{\mathbb{E}}\Big[\int_0^T \Big( \sum_{n=1}^\infty \big| {\tt h}_n(\bar{v}_{\kappa}(t))-{\tt h}_n(u_*(t))\big|^2\Big)\,{\rm d}x\,{\rm d}t\Big] \le C(L_\sigma) 
 \bar{\mathbb{E}}\Big[\int_0^T \| \bar{v}_{\kappa}(t)- u_*(t)\|_{L^2}^2\,{\rm d}t \Big]\,.
\end{align*}
Hence, $${\tt h}(\bar{v}_{\kappa}) \rightarrow {\tt h}(u_*) \quad \text{in}\quad L^2(\bar{\Omega}\times[0,T];\mathcal{L}_2(L^2))$$ which then implies that 
$$\displaystyle \bar{\mathbb{E}}\Big[\int_0^T \|{\tt h}(\bar{v}_{\kappa}(t))\|_{\mathcal{L}_2(L^2)}^2\,{\rm d}t\Big] \goto 
 \bar{\mathbb{E}}\Big[\int_0^T \|{\tt h}(u_*(t))\|_{\mathcal{L}_2(L^2)}^2\,{\rm d}t\Big].$$
Similarly, thanks to \ref{A5}-\ref{A6} and ${\rm (i)}$ of Lemma \ref{lem:convergence-1}, we get
\begin{align*}
\displaystyle  \bar{\mathbb{E}}\Big[\int_0^T \int_{\tt E}\|\eta(\cdot,\bar{v}_{\kappa}(t);z)\|_{L^2}^2\,m({\rm d}z)\,{\rm d}t\Big] \goto 
 \bar{\mathbb{E}}\Big[\int_0^T \int_{\tt E}\|\eta(\cdot,u_*(t-);z)\|_{L^2}^2\,m({\rm d}z)\,{\rm d}t\Big]\,.
\end{align*}
We combine the above estimates in \eqref{esti:new-3-0} to finally obtain 
\begin{align*}
 \limsup_{\kappa >0} \bar{\mathbb{E}}\Big[\int_{D_T} {\tt A}(x, v_\kappa(t),\grad v_{\kappa}(t))\cdot \grad v_{\kappa}(t)\,{\rm d}x\,{\rm d}t\Big] 
 \le  \bar{\mathbb{E}}\Big[ \int_{D_T} G \cdot \grad  u_* \,{\rm d}x\,{\rm d}t\Big]\,. 
\end{align*}
Following the proof of \cite[Lemma $2.17$, pp. 287-289]{Vallet2021}, we have 
\begin{align*}
 \bar{\mathbb{E}}\Big[ \int_{D_T} G \cdot \grad  u_* \,{\rm d}x\,{\rm d}t\Big] \le  \liminf_{\kappa >0} \bar{\mathbb{E}}\Big[\int_{D_T} {\tt A}(x, v_\kappa(t),\grad v_{\kappa}(t))\cdot \grad v_{\kappa}(t)\,{\rm d}x\,{\rm d}t\Big]\,.
\end{align*}
As a consequence, for any $v\in L^p(\bar{\Omega}\times D_T)$,
\begin{align*}
  \lim_{\kappa >0} \bar{\mathbb{E}}\Big[\int_{D_T} {\tt A}(x, v_\kappa(t),\grad v_{\kappa}(t))\cdot \big(\grad v_{\kappa}(t)-v\big)\,{\rm d}x\,{\rm d}t\Big]= 
 \bar{\mathbb{E}}\Big[ \int_{D_T} G \cdot \big(\grad  u_*-v\big) \,{\rm d}x\,{\rm d}t\Big]\,. 
 \end{align*}
 Moreover, in \cite{Vallet2021}, it has shown that the sequence $\big({\tt A}(x, v_\kappa, \nabla v_\kappa)\big)$ converges weakly~(along a subsequence if necessary) in $L^{p^\prime}$ to ${\tt A}(x, u_*, \nabla u_*)$
for almost every $(\bar{\omega},t)\in \bar{\Omega}\times [0,T]$ and 
\begin{align*}
 \bar{\mathbb{E}}\Big[\int_{D_T} {\tt A}(x, u_*,\grad u_*)\cdot \big(\grad u_*-v\big)\,{\rm d}x\,{\rm d}t\Big] \le \liminf_{\kappa\goto 0}
 \bar{\mathbb{E}}\Big[\int_{D_T} {\tt A}(x, v_\kappa(t),\grad v_{\kappa}(t))\cdot \big(\grad v_{\kappa}(t)-v\big)\,{\rm d}x\,{\rm d}t\Big]\,.
\end{align*}
Hence  for any  $v\in L^p(\bar{\Omega}\times D_T)$, one has
\begin{align*}
 \bar{\mathbb{E}}\Big[\int_{D_T} {\tt A}(x, u_*,\grad u_*)\cdot \big(\grad u_*-v\big)\,{\rm d}x\,{\rm d}t\Big] \le \bar{\mathbb{E}}\Big[ \int_{D_T} G \cdot \big(\grad  u_*-v\big) \,{\rm d}x\,{\rm d}t\Big]\,. 
\end{align*}
Therefore it follows that $G= {\tt A}(\cdot, u_*, \nabla u_*)$.

\vspace{.2cm}

\noindent{\bf Step ${\rm (iii)}$:}  With $ G={\tt A}(x\cdot, u_*,\nabla u_*)$, \eqref{eq:nonlinear-new} then takes the form 
\begin{align*}
 \big( {u}_*(t), \phi \big)_{L^2}& = \big( {u}_*(0), \phi \big)_{L^2} +  \int_0^t \big\langle \mbox{div}_x \big( {\tt A}(x\cdot, u_*(s),\nabla u_*(s)) + \vec{F} (u_*(s))\big), \phi\big\rangle \,{\rm d}s  + \int_0^t  \big\langle  U_*(s), \phi\big\rangle\,{\rm d}s \notag \\
 & \hspace{0.5cm} + \Big\langle \int_0^t{\tt h}(u_*(s,\cdot)) \,dW_*(s), \phi \Big\rangle   + \Big\langle \int_0^t \int_{\tt E} \eta(\cdot,u_*(s-,\cdot);z) \widetilde{N}_*({\rm d}z,{\rm d}s), \phi \Big\rangle \quad \forall\, \phi \in W_0^{1,p} \,.
\end{align*}
Note that, since $\Pi_{\kappa}U \goto U$ in $L^2(0,T;L^2)$, one can easily see that (from the definition of $U_\kappa$), $\mathbb{P}$-a.s.,
 \begin{align*}
 \|U_\kappa -U\|_{L^2(0,T;L^2)}\goto 0\,.
 \end{align*}
 This implies that $\mathcal{L}(U_\kappa)=\mathcal{L}(U)$. 
 Moreover, since $\mathcal{L}(U_{\kappa}^*)= \mathcal{L}(U_{\kappa})$ on $L^2(0,T;L^2)$ and 
$\bar{\mathbb{P}}$-a.s., ~$U_{\kappa}^* \goto U_*$  weakly in $L^2(0,T;L^2)$,
we see that  $\mathcal{L}(U_*)=\mathcal{L}(U)$ on $L^2(0,T;L^2)$  and $\bar{\mathbb{E}}[\|U_*\|_{L^2(0,T;L^2)}]< + \infty$. Furthermore, in view of Proposition \ref{prop:skorokhod-representation} and the 
 estimates in \eqref{a-priori-3}, one can easily arrive at \eqref{esti:bound-weak-solun}. Hence 
the $8$-tuple $\bar{\pi}:=\big( \bar{\Omega}, \bar{\mathcal{F}},  \bar{\mathbb{P}}, \bar{\mathbb{F}},W_*, N_*, v_*, U_*\big)$
is a weak martingale solution of \eqref{eq:nonlinear}.  This finishes the proof of Theorem \ref{thm:existence-weak}.

\begin{rem}\label{rem:existence-weak-Brownian}
Let us comment on the solution of \eqref{eq:nonlinear} with Wiener noise only. 
\begin{itemize}
\item[i)] In case of Wiener noise only,  we can show that, under the assumptions \ref{A1}-\ref{A4}, the underlying problem has a weak solution $u\in C([0,T]; W^{-1,p^\prime})\cap C([0,T]; L_w^2)\cap L^p(0,T; W_0^{1,p})$ $\bar{\mathbb{P}}$-a.s. Basically one needs to replace $\mathcal{Z}_T$ by $\mathcal{C}_T$ and mimic the arguments to arrive at the assertion, where 
\begin{align*}
\mathcal{C}_T:= C([0,T]; W^{-1,p^\prime})\cap L^2_w(0,T; W^{1,2}) \cap L^2(0,T; L^2)\cap C([0,T]; L^2_w)\,.
\end{align*}
Moreover, by \cite[Theorem $4.2.5$]{Liu2015}, one can easily deduce that 
$u\in L^2(\Omega; C([0,T];L^2))$. 
\item[ii)] The additive Wiener noise case was studied in \cite{Vallet2021} with the $p$-type growth of ${\tt A}$, where $\frac{2d}{d+1}<p<2$. We remark here that, in this case also, our methodology can be applied for multiplicative as well as additive noise to establish existence of a martingale solution. In this case, there would be a possible change in the statement of ${\rm i)}$ of Lemma \ref{lem:convergence-1}---which could be restated in the following way:

\begin{itemize}
\item[i')]
 ${u}_{\kappa}^*\goto {u}_*$ in $L^q\big(\bar{\Omega}; L^2(0,T; L^2)\big)$ for all $1\le q<2$. Moreover,  $ v_{\kappa}$ and $ \bar{v}_\kappa $ converge to
 $u_*$ in $L^q\big(\bar{\Omega}; L^2(0,T; L^2)\big)$. 
 \end{itemize}
 Moreover,  the statement of Lemma \ref{lem:conv-martingale-1} changes to: for all $\phi\in W_0^{1,p}$, there holds
 \begin{align*}
  \lim_{\kappa \goto 0}& \bar{\mathbb{E}}\Big[\int_0^T \Big|\int_0^t  \big\langle {\tt h}(\bar{v}_{\kappa}(s))- {\tt h}(u_*(s)),
 \phi\big\rangle \,d{W}_*(s)\Big|^q\,{\rm d}t\Big]=0\,.
\end{align*}
With these results in hand, one can deduce the results of Lemma \ref{lem:convergence-allterm} and therefore able to prove existence of a martingale solution. 
In \cite{Vallet2021}, authors have mentioned that the compactness arguments are more subtle than in the case of a multiplicative noise as the stochastic integral does not inherit the spatial regularity of the solution. But the above discussion suggests that use of Aldous  tightness criterion together with the Jakubowski-Skorokhod theorem \cite{Jakubowski1998} overcome the compactness issue i.e., difficulty level remains same as in the multiplicative noise case. 
\end{itemize}
\end{rem}
\begin{rem}
For L\'{e}vy noise case, our methodology fails to give existence of a martingale solution when $p$, the growth of the operator ${\tt A}$, satisfies the condition $\frac{2d}{d+1}<p<2$. As we mentioned in Remark \ref{rem:existence-weak-Brownian}, ${\rm i')}$ holds true.  If we are able to get the following limit
\begin{align}
 \lim_{\kappa \goto 0}& \bar{\mathbb{E}}\Big[\int_0^T \Big|\int_0^t \int_{\tt E} \big\langle \eta(\cdot, \bar{v}_{\kappa}(s);z)- \eta(\cdot, u_*(s-);z),
 \phi\big\rangle \widetilde{N}_*({\rm d}z,{\rm d}s)\Big|^q\,{\rm d}t\Big]=0\,, \label{lim:poisson-special}
 \end{align}
 then we could arrive at the existence result. But due to the nature of L\'{e}vy measure $m({\rm d}z)$ i.e. the assumption \ref{A6}, we are not able to deduce \eqref{lim:poisson-special}.

\end{rem}
 
\subsection {\bf Proof of Theorem \ref{thm:pathwise-uniqueness}}
  To show path-wise uniqueness of weak solutions, we use standard $L^1$-contraction method. We first consider a special function.
For any $ \Upsilon\in C^\infty(\R)$ satisfying 
\begin{align*}
      \Upsilon(0) = 0\,,\quad \Upsilon(-r)= \Upsilon(r)\,,\quad 
      \Upsilon^\prime(-r) = -\Upsilon^\prime(r)\,,\quad \Upsilon^{\prime\prime} \ge 0\,,\quad 
\Upsilon^\prime(r)=
	\begin{cases}
	-1\quad \text{if} ~ r\le -1\,,\\
         \in [-1,1] \quad\text{if}~ |r|<1\,,\\
         +1 \quad \text{if} ~ r\ge 1\,,
         \end{cases}
\end{align*} 
and for any $ \vartheta > 0$, we define 
$\Upsilon_\vartheta:\R \rightarrow \R$ by 
$\Upsilon_\vartheta(r) =  \displaystyle \vartheta \Upsilon(\frac{r}{\vartheta}).$
The approximate functions $\Upsilon_{\vartheta}\in C^\infty(\R)$  satisfies the following bounds:
\begin{align}\label{eq:approx to abosx}
 \displaystyle   |r|-M_1\vartheta \le \Upsilon_\vartheta(r) \le |r|,~~~
  |\Upsilon_\vartheta^{\prime\prime}(r)| 
 \le \frac{M_2}{\vartheta} {\bf 1}_{|r|\le \vartheta},~~\text{with}~~
 M_1:=\underset{|r|\le 1}\sup\,\big | |r|-\Upsilon(r)\big |,~~M_2:= \underset{|r|\le 1}\sup\,|\Upsilon^{\prime\prime} (r)|\,.
\end{align} 
Let $\big(\Omega, \mathcal{F}, \mathbb{P}, \mathbb{F},  
W, N, u_1, U\big)$ and $\big(\Omega, \mathcal{F}, \mathbb{P}, \mathbb{F},  
W, N, u_2, U\big)$ be two weak solution of \eqref{eq:nonlinear}.
We apply It\^{o}-L\'{e}vy formula to $ \displaystyle \int_{D}\Upsilon_\vartheta \big( u_1(t)-u_2(t)\big)\,{\rm d}x$, and use the integration by parts formula to have
\begin{align}
 &\mathbb{E}\Big[ \int_{D}\Upsilon_\vartheta \big(u_1(t)-u_2(t)\big)\,{\rm d}x \Big]\notag \\
 & = - \mathbb{E}\Big[ \int_{0}^t \int_{D} \big(  {\tt A}(x, u_1,\grad u_1) - {\tt A}(x, u_2,\grad u_2)\big)
 \cdot  \big(\grad u_1(s)- \grad u_2(s)\big)\Upsilon_{\vartheta}^{\prime\prime} \big(u_1(s)-u_2(s)\big) \,{\rm d}x\,{\rm d}s \Big]\notag \\
 &\quad  -\mathbb{E}\Big[ \int_{0}^t \int_{D} \big(  \vec{F}(u_1) - \vec{F}(u_2)\big)
 \cdot  \big(\grad u_1(s)- \grad u_2(s)\big)\Upsilon_{\vartheta}^{\prime\prime} \big(u_1(s)-u_2(s)\big) \,{\rm d}x\,{\rm d}s\Big] \notag \\
 & +\mathbb{E}\Big[ \int_{0}^{t}\int_{\tt E}\int_{D}\int_0^1  \Upsilon_\vartheta^{\prime\prime}\Big( u_1(s-)-u_2(s-) + \lambda 
 \big( \eta(x,u_1(s-);z)-\eta(x,u_2(s-);z)\big)\Big) \notag \\
 & \hspace{3cm} \times (1-\lambda)  \big( \eta(x,u_1(s-);z)-\eta(x,u_2(s-);z)\big)^2\,{\rm d}\lambda\,{\rm d}x\,m({\rm d}z)\,{\rm d}s\Big] \notag \\
 & +  \frac{1}{2} \mathbb{E}\Big[ \int_{0}^{t}\int_{D}\Upsilon_\vartheta^{\prime\prime}\big( u_1(s) -u_2(s)\big) \sum_{k=1}^\infty\big| {\tt h}_k(u_1(s)-{\tt h}_k(u_2(s)\big|^2\,{\rm d}s\,{\rm d}x\Big] 
 \equiv \sum_{i=1}^4\mathcal{B}_i \,.\label{eq:uniqueness-1}
\end{align}
In view of ${\rm (i)}$ of \ref{A2} and the fact that $\Upsilon_{\vartheta}^{\prime \prime}\ge 0$, one has
\begin{align*}
\mathcal{B}_1\le  \mathbb{E}\Big[   \int_{0}^t \int_{D}  \big(  {\tt A}(x, u_2,\grad u_2) - {\tt A}(x, u_1,\grad u_2)\big)\cdot \nabla(u_1(s)-u_2(s))
 \Upsilon_{\vartheta}^{\prime\prime} \big(u_1(s)u_2(s)\big)\,{\rm d}x\,{\rm d}s\Big] \,.
\end{align*}
 By using  ${\rm (iii)}$ of \ref{A2} and \eqref{eq:approx to abosx}, we have
 $\mathbb{P}$-a.s., 
\begin{align*}
&  \big(  {\tt A}(x, u_2,\grad u_2) - {\tt A}(x, u_1,\grad u_2)\big)\cdot \nabla(u_1(s)-u_2(s))
 \Upsilon_{\vartheta}^{\prime\prime} \big(u_1(s)u_2(s)\big) \\
& \le \big( C_4 |\nabla u_2|^{p-1} + K_3(x)\big) |u_1 -u_2|\,\big|\nabla( u_1-u_2)\big| \frac{M_2}{\vartheta} \textbf{1}_{\{|u_1-u_2| \le \vartheta\}}   \goto 0 \quad (\vartheta \goto 0)
\end{align*}
for a.e. $(t,x)\in D_T$. Moreover, Young's inequality gives 
\begin{align*}
 & \Big|  \big(  {\tt A}(x, u_2,\grad u_2) - {\tt A}(x, u_1,\grad u_2)\big)\cdot \nabla(u_1(s)-u_2(s))
 \Upsilon_{\vartheta}^{\prime\prime} \big(u_1(s)u_2(s)\big) \Big| \\
 & \le  C \Big( |\nabla(u_1-u_2)|^p + |\nabla u_2|^p + |K_3(x)|^{p^\prime}\Big) \in L^1(\Omega \times D_T)\,.
\end{align*}
 We use dominated convergence theorem to infer that $\mathcal{B}_1\goto 0$ as $\vartheta \goto 0$. Following the arguments as done in \cite[pp 18-19]{Majee2020}, we have $\mathcal{B}_2,~\mathcal{B}_3\goto 0$ as $\vartheta \goto 0$. Furthermore, thanks to the assumption \ref{A4} and \eqref{eq:approx to abosx}, we see that
 \begin{align*}
 \mathcal{B}_4\le C(L_{\sigma})  \mathbb{E}\Big[\int_0^t \int_D |u_1-u_2|^2 \Upsilon_{\vartheta}^{\prime\prime} \big(u_1(s)-u_2(s)\big)\,{\rm d}x\,{\rm d}s\Big] \le C\vartheta \,t \goto 0 ~~~\text{as}~~\vartheta \goto 0\,.
 \end{align*}
In view of above convergence results, we pass to the limit in \eqref{eq:uniqueness-1} to conclude
\begin{align}
 \mathbb{E}\Big[\int_{D}\big| u_1(t,x)-u_2(t,x)\big|\,{\rm d}x\Big]=0 \notag
\end{align}
---which then yields path-wise uniqueness of weak solutions.
\subsection{Proof of Theorem \ref{thm:existence-strong}}
In view of Theorem \ref{thm:existence-weak} there exists a martingale solution 
$$u\in L^2(\Omega; \mathbb{D}([0,T]; L^2_w))\cap L^p(\Omega; L^p(0,T; W_0^{1,p}))$$ and by Theorem \ref{thm:pathwise-uniqueness} it is path-wise unique.  Hence the existence of  a unique strong solution of \eqref{eq:nonlinear} follows from \cite[Theorem 2]{Ondrejat2004}.

\section{Existence of optimal control: proof of Theorem \ref{thm:existence-optimal-control}}
In this section, we prove existence of a weak optimal solution of \eqref{eq:control-problem} i.e., Theorem \ref{thm:existence-optimal-control} in four steps using standard variational approach.
\vspace{.1cm}

\noindent{\bf Step ${\rm (a)}$:} By Theorem \ref{thm:existence-weak}, there exists a
weak solution of \eqref{eq:nonlinear} with $U=0$ satisfying the estimate \eqref{esti:bound-weak-solun}. Since $ {\tt u}_{\rm det}\in L^p(0,T; W_0^{1,p})$ is a given deterministic profile, and  ${\tt \Psi}$ is a
Lipschitz continuous function, one can easily deduce that 
$\Lambda:=  \inf_{\pi \in \mathcal{U}_{\rm ad}^w(u_0; T)} \mathcal{\pmb J}(\pi)< + \infty.$ Let $\pi_n=\big( \Omega_n,  \mathcal{F}_n,  \mathbb{P}_n,\mathbb{F}_n=\{ \mathcal{F}_t^n\}, W_n, N_n, u_n, U_n\big)\in \mathcal{U}_{\rm  ad}^w(u_0;T)$ be a 
minimizing sequence of weak admissible solutions ~(which exists since $\Lambda<\infty$) such that
$\Lambda= \displaystyle \lim_{n\goto \infty}\mathcal{\pmb J}(\pi_n)$. Then, $\mathbb{P}_n$-a.s. and for all $t\in [0,T]$,
\begin{align*}
 u_n(t) &= u_0 + \int_0^t {\rm div}_x \big({\tt A}(x, u_n(s), \nabla u_n(s))+ \vec{F}(u_n(s))\big)\,{\rm d}s + \int_0^t U_n(s)\,{\rm d}s  \notag \\
 & \quad + \int_0^t {\tt h}(u_n(s))\,dW_n(s)
 + \int_0^t \int_{\tt E} \eta(x,u_n;z)\widetilde{N}_n({\rm d}z,{\rm d}s)
 := u_0 + \sum_{i=1}^4{\tt T}_{i,n}(t)\,.
\end{align*}
Furthermore, since $\Lambda $ is finite, there exists a constant $C>0$, independent of $n$ such that the following estimates (uniform in $n$) hold:
\begin{align}\label{esti:apriori-solun-n}
\displaystyle  \sup_{n}\mathbb{E}_n\Big[ \int_0^T \|U_n(t)\|_{L^2}^2\,{\rm d}t\Big]\le C, \quad 
 \displaystyle \sup_{n}\mathbb{E}_n \Big[\sup_{0\le t\le T} \| u_n(t)\|_{L^2}^2 + \int_0^T \| u_n(t)\|_{W_0^{1,p}}^p\,{\rm d}t \Big] \le C\,.
\end{align}
\noindent{\bf Step ${\rm (b)}$:} We use the same arguments as  done in the proof of Lemma \ref{lem:tightness} and apply Theorem \ref{thm:for-tightness} along with the uniform-estimates \eqref{esti:apriori-solun-n}, to establish the tightness of $\{ \mathcal{L}(u_n)\}$ on $(\mathcal{Z}_T,\mathcal{T}_T)$. 
Moreover, repeating the arguments of Lemma \ref{lem:tightness-control} and using the  tightness of the families  $\{ \mathcal{L}(N_n({\rm d}z,{\rm d}t))\}$ and $\{\mathcal{L}(W_n)\}$ on $M_{\bar{\mathbb{N}}}({\tt E} \times [0,T])$ and $C([0,T];\mathcal{U})$ respectively, we see that the family $\{ \mathcal{L}(u_n,U_n, W_n, N_n)\}$ is tight in $\mathcal{X}_T$. Thus, one can apply Jakubowski version of Skorokhod theorem for a non-metric space e.g.,
 \cite[Corollary $2$]{Motyl2013} to ensure existence of a subsequence of $\{n\}$, still denoted it by same $\{n\}$, a probability space $(\Omega^*, \mathcal{F}^*, \mathbb{P}^*)$, and on it $\mathcal{X}_T$-valued 
random variables $({u}_n^*, U_n^*,W_n^*, N_n^*)$ and $({u}^*, U^*, W^*, N^*)$ such that 
\begin{itemize}
 \item [(1)] $\mathcal{L}({u}_n^*, U_n^*, W_n^*, N_n^*)=\mathcal{L}(u_n, U_n, W_n, N_n)$ for all $n\in \mathbb{N}$,
 \item [(2)] $({u}_n^*, U_n^*, W_n^*, N_n^*)\goto ({u}^*,U^*, W^*, N^*)$ in $\mathcal{X}_T\quad \mathbb{P}^*$-a.s. ~~~~~$(n\goto \infty)$, 
 \item [(3)] $ \big( W_n^*(\omega^*),N_n^*(\omega^*)\big)=  \big( W^*(\omega^*),N^*({\omega}^*)\big)$ for all ${\omega}^*\in {\Omega}^*$.
\end{itemize}
Moreover, there exists a sequence of
 perfect functions $\phi_{n}:\Omega^*\to\Omega_n$ such that
 \begin{align*}
  {u}_{n}^*=u_n\circ\phi_{n}\,, \quad U_{n}^*= U_{n} \circ \phi_{n}\,, \quad \mathbb{P}_n={\mathbb{P}}^*\circ \phi_{n}^{-1}\,, 
 \end{align*}
 and the uniform estimate \eqref{esti:apriori-solun-n} holds true for $u_n^*$ and $U_n^*$. Furthermore, $u_n^*$ satisfies the following PDE: $\mathbb{P}^*$-a.s., 
\begin{align}
 u_n^*(t) &= u_0 + \int_0^t {\rm div}_x\big( {\tt A}(x, u_n^*(s), \nabla u_n^*(s))+ \vec{F}(u_n^*(s))\big)\,{\rm d}s + \int_0^t U_n^*(s)\,{\rm d}s  \notag \\
 & \qquad + \int_0^t {\tt h}(u_n^*(s))\,dW_n^*(s)
 + \int_0^t \int_{\tt E} \eta(x, u_n^*;z)\widetilde{N}_n({\rm d}z,{\rm d}s)\,, \quad \forall~t\in [0,T] \,. \label{eq:p-laplace-change-variable-n}
\end{align}
\noindent{\bf Step ${\rm (c)}$:} Let $\mathbb{F}^*$ be the natural filtration of $({u}_n^*,U_n^*,W_n^*, N_n^*, {u}^*, U^*, W^*, N^*)$.
Then, on stochastic basis$(\Omega^*, \mathcal{F}^*, \mathbb{P}^*, \mathbb{F}^*)$,  $W_n^*$ and $W^*$ are $L^2$-valued cylindrical Wiener processes, and  $N_n^*$ and $N^*$ are time-homogeneous Poisson random measures on ${\tt E}$ with intensity measure $m({\rm d}z)$; see \cite[Section 9]{erika2014}. Similar to proof of Lemmas \ref{lem:convergence-1}, 
\ref{lem:conv-martingale-1} and \ref{lem:converegnce-operator-term} and the arguments of Subsection \ref{subsec:existence}, one can pass to the limit in \eqref{eq:p-laplace-change-variable-n}
and conclude that the limit process $u^*$ is a  $L^2$-valued $\mathbb{F}^*$-predictable stochastic process with
 $u^* \in L^2\big(\Omega^*; \mathbb{D}(0,T; L_w^2)\big)\cap L^p\big( \Omega^*; L^p(0,T;W_0^{1,p})\big)$ satisfying the following:
 $\mathbb{P}^*$-a.s. and for all $t\in [0,T]$
\begin{align*}
 \big( {u}^*(t), \phi \big)_{L^2}& = \big( {u}_0, \phi \big)_{L^2} +  \int_0^t \big\langle \mbox{div}_x \big( {\tt A}(x,\cdot, u^*(s),\nabla u^*(s)) + \vec{F} (u^*(s))\big), \phi\big\rangle \,{\rm d}s  + \int_0^t  \big\langle  U^*(s), \phi\big\rangle\,{\rm d}s \notag \\
 & \hspace{.2cm} + \Big\langle \int_0^t{\tt h}(u^*(s,\cdot)) \,dW^*(s), \phi \Big\rangle   + \Big\langle \int_0^t \int_{\tt E} \eta(\cdot,u^*(s-,\cdot);z) \widetilde{N}^*({\rm d}z,{\rm d}s), \phi \Big\rangle \quad \forall\, \phi \in W_0^{1,p} \,.
\end{align*}
 Hence $\pi^*:=(\Omega^*, \mathcal{F}^*,
\mathbb{P}^*, \mathbb{F}^*, W^*, N^*, u^*, U^*) \in \mathcal{U}_{\rm ad}^w(u_0;T)$. Moreover,  $(u^*, U^*)$ satisfies the estimate \eqref{esti:bound-weak-solun}.

\vspace{.2cm}

\noindent{\bf Step ${\rm (d)}$:} Since $\pi^* \in \mathcal{U}_{\rm ad}^w(u_0;T)$, it is obvious that $\Lambda \le \mathcal{\pmb J}(\pi^*)$. Invoking the convexity of the cost functional (in the control variable) and mimicking the proof of {\bf Step ${\rm IV)}$} of \cite[Theorem $2.2$]{Majee2020}, one arrive at the fact that  $\mathcal{\pmb J}(\pi^*)\le \Lambda$. In other words, $\pi^*$ is a weak optimal solution of \eqref{eq:control-problem},
and $U^*$ is an optimal control. This finishes the proof.
\begin{rem}\label{rem:cont-dependence-initial-cond} 
We make the following remarks regarding the continuously dependency of martingale solution of  \eqref{eq:nonlinear-no-control} on initial data. 
\begin{itemize}
\item[a)] Martingale solution of \eqref{eq:nonlinear-no-control} continuously depends on initial data. Indeed, let $\{u_0^n\}$ be a sequence of $L^2$-valued deterministic functions such that $u_0^n \rightharpoonup u_0$ in $L^2$. Let $(\Omega_n, \mathcal{F}_n, \mathbb{P}_n, \mathbb{F}_n, W_n, N_n, u_n)$ be a martingale solution of \eqref{eq:nonlinear-no-control} with initial datum $u_0^n$. Since $u_0^n \rightharpoonup u_0\in L^2$, one has $\sup_{n\ge 1}\|u_0^n\|_{L^2}^2\le C$. Hence $u_n$ satisfies the uniform estimate \eqref{esti:apriori-solun-n}. Thus, following  {\bf Step ${\rm (A)}$}- {\bf Step ${\rm (D)}$} of the proof of Theorem \ref{thm:existence-optimal-control}, one arrive at the following: there exist 
 \begin{itemize}
 \item[i)] a subsequence $\{u_{n_j}\}_{j=1}^\infty$,
 \item[ii)] a stochastic basis $\big(\bar{\Omega}, \bar{\mathcal{F}},
\bar{\mathbb{P}}, \bar{\mathbb{F}}\big)$,
\item[iii)] time-homogeneous Poisson random measure $\bar{N}_j$ and $\bar{N}$ with $\bar{N}_j(\bar{\omega})=\bar{N}(\bar{\omega});~j\ge 1$ for $\bar{\omega}\in \bar{\Omega}$,
\item[iv)] $L^2$-valued cylindrical Wiener processes $\bar{W}_j$ and $\bar{W}$ with $\bar{W}_j(\bar{\omega})=\bar{W}(\bar{\omega});~j\ge 1$ for $\bar{\omega}\in \bar{\Omega}$,
\item[v)] $\mathcal{Z}_T$-valued Borel measurable random variables $\bar{u}$ and $\{\bar{u}_j\}_{j=1}^\infty$ 
 \end{itemize}
 such that $
 \mathcal{L}(u_{n_j})=\mathcal{L}(\bar{u}_j)$ on $\mathcal{Z}_T$ and $\bar{\mathbb{P}}$-a.s., $\bar{u}_j\goto \bar{u}$ in $\mathcal{Z}_T$. Moreover, the tuple $\big(\bar{\Omega},
\bar{ \mathcal{F}}, \bar{\mathbb{P}}, \bar{\mathbb{F}}, \bar{W},\bar{ N}, \bar{u}\big)$ is a martingale solution of \eqref{eq:nonlinear-no-control} with initial data $u_0$, on the interval $[0,T]$. 
\item[b)] In the case of Brownian noise only, part $a)$ holds true replacing $\mathcal{Z}_T$ by $\mathcal{C}_T$ defined in Remark 
\ref{rem:existence-weak-Brownian}.
\end{itemize}
\end{rem}

\section{Existence of Invariant measure} \label{Existence of Invariant measure}
In this section, we wish to study the existence of an invariant measure for the solution of \eqref{eq:nonlinear-no-control}. In view of Theorem \ref{thm:existence-strong}, there exists path-wise unique strong solution $u\in L^2(\Omega; \mathbb{D}([0,T]; L^2_w))\cap L^p(\Omega; L^p(0,T; W_0^{1,p}))$. We start with the observation that instead of initial condition, one may consider nonlinear SPDE \eqref{eq:nonlinear-no-control} with initial distribution $\rho_0$, a Borel probability measure on $L^2$, and prove existence of a martingale solution. In fact, one can establish the following lemma.
\begin{lem}
Let the assumptions \ref{A1}-\ref{A6} hold, and the initial distribution $\rho_0$ satisfies the condition 
   \begin{align*}
       \int_{L^2}|x|^2 \rho_0({\rm d}x) < R \quad \text{for some} \ R > 0\,. 
   \end{align*}
Then, there exists a weak solution
$\bar{\pi}=\big( \bar{\Omega}, \bar{\mathcal{F}}, \bar{\mathbb{P}},  \{\bar{\mathcal{F}}_t\}, \bar{W},\bar{N}, \bar{u}\big)$ of \eqref{eq:nonlinear-no-control} with $\mathcal{L}(\bar{u}_0) = \rho_0$ in $L^2$, satisfying
the following estimate
\begin{equation*}
 \bar{\mathbb{E}}\Big[\sup_{0\le t\le T} \|\bar{u}(t)\|_{L^2}^2 + \int_0^T \| \bar{u}(t)\|_{W_0^{1,p}}^p\,{\rm d}t \Big] \le C(R,p)
\end{equation*}
for some constant $C>0$.
\end{lem}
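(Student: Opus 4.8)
The plan is to rerun the entire construction behind Theorem~\ref{thm:existence-weak}, specialised to the uncontrolled equation ($U\equiv 0$), but with the deterministic datum $u_0$ replaced by a random initial condition whose law is $\rho_0$. Fix an $L^2$-valued random variable $\bar u_0$ with $\mathcal{L}(\bar u_0)=\rho_0$, so that the moment hypothesis reads $\bar{\mathbb{E}}[\|\bar u_0\|_{L^2}^2]=\int_{L^2}|x|^2\,\rho_0({\rm d}x)<R$. The only genuinely new ingredient at the discrete level is a \emph{measurable} version of the initial approximation \eqref{esti:initial-approx}: the map $u_0\mapsto u_{0,\kappa}$ produced by \cite[Lemma~$30$]{Vallet2019} (e.g.\ as the minimiser of $v\mapsto \tfrac12\|v-u_0\|_{L^2}^2+\kappa\|\nabla v\|_{L^p}^p$ over $W_0^{1,p}$) is continuous, hence Borel, so $\omega\mapsto \bar u_{0,\kappa}(\omega)$ is a well-defined $W_0^{1,p}$-valued random variable satisfying $\tfrac12\|\bar u_{0,\kappa}\|_{L^2}^2+\kappa\|\nabla\bar u_{0,\kappa}\|_{L^p}^p\le \tfrac12\|\bar u_0\|_{L^2}^2$ a.s.\ and $\bar u_{0,\kappa}\to\bar u_0$ in $L^2$ a.s. In particular, by dominated convergence (dominated by $\|\bar u_0\|_{L^2}^2$), $\bar{\mathbb{E}}[\|\bar u_{0,\kappa}\|_{L^2}^2]\to \bar{\mathbb{E}}[\|\bar u_0\|_{L^2}^2]$.

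With $\hat u_0:=\bar u_{0,\kappa}$ and $U\equiv0$, I would solve the semi-implicit scheme \eqref{eq:discrete-nonlinear} exactly as before and repeat Lemmas~\ref{lem:a-priori:1}--\ref{lem:a-priori:2} verbatim, the only change being that every occurrence of $\|u_0\|_{L^2}^2$ is now an expectation bounded by $\bar{\mathbb{E}}[\|\bar u_0\|_{L^2}^2]<R$; consequently all constants become of the form $C(R,p)$. The tightness argument of Lemma~\ref{lem:tightness}, the application of Jakubowski's Skorokhod theorem (Proposition~\ref{prop:skorokhod-representation}), and the Minty-type identification of the weak limit of ${\tt A}(\cdot,v_\kappa,\nabla v_\kappa)$ carried out in Subsection~\ref{subsec:existence} all go through unchanged, since they rely only on the uniform bounds \eqref{a-priori-3}; here one uses $\bar{\mathbb{E}}[\|\bar u_{0,\kappa}\|_{L^2}^2]\to\bar{\mathbb{E}}[\|\bar u_0\|_{L^2}^2]$ to pass to the limit in the two energy identities \eqref{esti:2-new}--\eqref{esti:3-new}. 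Passing to the limit by weak lower semicontinuity in the uniform bounds yields the asserted a-priori estimate with constant $C(R,p)$, and produces a tuple solving \eqref{eq:nonlinear-no-control}.

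The one point requiring care — and the main obstacle — is verifying that the limit process carries the prescribed initial law, i.e.\ $\mathcal{L}(u_*(0))=\rho_0$. The a.s.\ strong $L^2$-convergence $\bar u_{0,\kappa}\to\bar u_0$ gives $\mathcal{L}(\bar u_{0,\kappa})\Rightarrow\rho_0$ on $(L^2,\|\cdot\|)$; since on the new probability space the discrete initial value satisfies $\mathcal{L}(u_\kappa^*(0))=\mathcal{L}(\tilde u_\kappa(0))=\mathcal{L}(\bar u_{0,\kappa})$, we obtain $\mathcal{L}(u_\kappa^*(0))\Rightarrow\rho_0$ on $(L^2,\|\cdot\|)$, and hence on $(L^2,w)$ because every bounded weakly continuous functional is norm continuous. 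On the other hand, the $\mathbb{D}([0,T];L^2_w)$-convergence furnished by Proposition~\ref{prop:skorokhod-representation}, evaluated at the (always continuous) endpoint $t=0$, gives $u_\kappa^*(0)\rightharpoonup u_*(0)$ in $L^2$ $\bar{\mathbb P}$-a.s., so $\mathcal{L}(u_\kappa^*(0))\Rightarrow\mathcal{L}(u_*(0))$ on $(L^2,w)$. By uniqueness of weak limits of laws on $(L^2,w)$ and the coincidence of the weak and strong Borel $\sigma$-algebras on the separable space $L^2$, we conclude $\mathcal{L}(u_*(0))=\rho_0$, so that $\bar u:=u_*$ has the required initial distribution. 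Everything else being a line-by-line transcription of the proof of Theorem~\ref{thm:existence-weak}, this completes the argument.
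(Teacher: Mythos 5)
Your proposal is correct and follows exactly the route the paper intends: the paper states this lemma without proof, merely observing that the construction of Theorem \ref{thm:existence-weak} (with $U\equiv 0$) carries over to a random, $\mathcal{F}_0$-measurable initial condition with finite second moment. Your additional care about the measurable version of the approximation $u_0\mapsto u_{0,\kappa}$ and the identification $\mathcal{L}(u_*(0))=\rho_0$ via the $\mathbb{D}([0,T];L^2_w)$-convergence at $t=0$ supplies precisely the details the paper leaves implicit, and I see no gap.
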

We need to recall the definition of uniqueness in law of solutions of \eqref{eq:nonlinear-no-control}
\begin{defi}
Let $\big(\Omega^i, \mathcal{F}^i, \mathbb{P}^i, \mathbb{F}^i, W^i, N^i, u^i\big),~~i=1,2$ be martingale solution of  \eqref{eq:nonlinear-no-control} with $u^i(0)=u_0$. We say that the solution is unique in law if the following holds:
\begin{align*}
{\rm Law}_{\mathbb{P}^1}(u^1)= {\rm Law}_{\mathbb{P}^2}(u^2)\quad \text{on}~~~\mathbb{D}([0,T];L_w^2])\cap L^p([0,T];W_0^{1,p})
\end{align*}
where ${\rm Law}_{\mathbb{P}^i}(u^i)$ for $i=1,2$ are probability measures on $\mathbb{D}([0,T];L_w^2])\cap L^p([0,T];W_0^{1,p}).$
\end{defi}
Since the problem \eqref{eq:nonlinear-no-control} has a path-wise unique martingale solution, a direct application of \cite[Theorems $2~\& ~11$]{Ondrejat2004} yields that the martingale solution is unique in law. Hence, we have the following lemma.
\begin{lem}\label{lem:uniqueness-law}
Under the assumptions \ref{A1}-\ref{A6}, martingale solutions of \eqref{eq:nonlinear-no-control}  is unique in law.
\end{lem}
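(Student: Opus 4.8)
The plan is to invoke the Yamada--Watanabe principle in the form established by Ondřejat for stochastic evolution equations driven simultaneously by Wiener and Poisson noise. The two essential ingredients are already available: Theorem \ref{thm:existence-weak} provides existence of a martingale solution of \eqref{eq:nonlinear-no-control} (specialised to $U=0$), while Theorem \ref{thm:pathwise-uniqueness} guarantees pathwise uniqueness for that equation. It then remains only to argue that these two facts together force uniqueness in law, so that the statement becomes a direct application of \cite[Theorems $2$ \& $11$]{Ondrejat2004}.

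First I would verify that the abstract hypotheses of \cite[Theorems $2$ \& $11$]{Ondrejat2004} are met in our concrete setting. This amounts to confirming that the driving objects---the cylindrical Wiener process $W$ on $L^2$ and the time-homogeneous Poisson random measure $N$ with intensity $m$---together with a solution process living in the path space $\mathbb{D}([0,T];L^2_w)\cap L^p([0,T];W_0^{1,p})$ fit the measurable-space framework required there. Since the coefficients ${\tt A}$, $\vec{F}$, ${\tt h}$ and $\eta$ are jointly measurable and satisfy the growth and Lipschitz bounds of \ref{A2}--\ref{A6}, the integrands appearing in the weak formulation \eqref{eq:defi-weak-form} are admissible, and the notion of martingale solution recorded above matches the one used in \cite{Ondrejat2004}.

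The core of the argument is the standard disintegration construction underlying Yamada--Watanabe. Given two martingale solutions $\big(\Omega^i,\mathcal{F}^i,\mathbb{P}^i,\mathbb{F}^i,W^i,N^i,u^i\big)$, $i=1,2$, with $u^i(0)=u_0$, one builds a common probability space carrying a single pair $(W,N)$ together with two copies $\tilde{u}^1,\tilde{u}^2$ whose regular conditional laws given $(W,N)$ coincide with those of $u^1$ and $u^2$. Both $\tilde{u}^i$ then solve \eqref{eq:nonlinear-no-control} relative to the same driving noise, so pathwise uniqueness (Theorem \ref{thm:pathwise-uniqueness}) yields $\tilde{u}^1=\tilde{u}^2$ $\bar{\mathbb{P}}$-a.s.\ in the path space. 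Consequently ${\rm Law}_{\mathbb{P}^1}(u^1)={\rm Law}_{\mathbb{P}^2}(u^2)$ on $\mathbb{D}([0,T];L_w^2)\cap L^p([0,T];W_0^{1,p})$, which is precisely uniqueness in law.

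I expect the main obstacle to be technical rather than conceptual: one must ensure that the Jakubowski--Skorokhod (non-metric) setting in which our martingale solutions are constructed is compatible with the regular-conditional-probability machinery behind the theorem. In particular, $\mathbb{D}([0,T];L_w^2)\cap L^p([0,T];W_0^{1,p})$ has to be treated as a measurable space on which the relevant conditional distributions genuinely exist, and the identification of the compensated Poisson integrals driven by the \emph{same} $N$ on the joint space must be justified. These are exactly the issues that \cite[Theorems $2$ \& $11$]{Ondrejat2004} are designed to handle, so the verification reduces to matching hypotheses rather than reproving the principle itself.
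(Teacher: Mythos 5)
Your proposal matches the paper's argument exactly: the paper also deduces uniqueness in law directly from pathwise uniqueness (Theorem \ref{thm:pathwise-uniqueness}) together with existence via \cite[Theorems $2$ \& $11$]{Ondrejat2004}, i.e.\ the Yamada--Watanabe principle in Ondřejat's formulation. Your additional discussion of the disintegration construction and the compatibility of the path space is a fuller account of what the cited theorems encapsulate, but it is the same route.
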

Recall the family $\{P_t\}$ defined in \eqref{defi:semi-group} i.e., 
\begin{align*}
(P_t \phi)(v):=\mathbb{E}\big[\phi(u(t,v))\big], \quad \phi \in\mathcal{B}(L^2)\,, 
\end{align*}
where $u(t,v)$ is the path-wise unique solution of  \eqref{eq:nonlinear-no-control} with fixed initial data $v\in L^2$. Note that $P_t\phi$ is bounded. Moreover, thanks to Lemma \ref{lem:uniqueness-law}, one can use similar argument as done in \cite[Section $4$]{ABW-2010} to get that 
$P_t \phi \in \mathcal{B}(L^2)$ and $\{P_t\}_{t\ge 0}$ is a semi-group on $\mathcal{B}(L^2)$; see  \cite[Section $9.6$]{peszat}.
\begin{lem}
    The family $\{u(t,v):~ t \ge 0,~ v \in L^2\}$ is Markov. In particular $P_{t+s} = P_t  P_s$ for $t,s \ge 0$.
\end{lem}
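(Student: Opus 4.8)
The plan is to derive the Markov property in the sharp form $\mathbb{E}[\phi(u(t+s,v))\mid \mathcal{F}_s] = (P_t\phi)(u(s,v))$ for every $\phi \in \mathcal{B}(L^2)$ and all $t,s\ge 0$, and then to read off $P_{t+s}=P_tP_s$ by integrating against $\mathbb{P}$ via the tower property. The argument rests on three ingredients already at our disposal: the existence of a pathwise unique \emph{strong} solution (Theorem \ref{thm:existence-strong}), which represents $u(t,v)$ as a Borel-measurable functional of the datum and the driving noise, $u(t,v)=\Phi\big(t,v,W|_{[0,t]},N|_{[0,t]}\big)$; uniqueness in law (Lemma \ref{lem:uniqueness-law}); and the time-homogeneity together with the independence of increments of the noise.

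First I would fix $s\ge 0$ and introduce the shifted noise $W^{s}(r):=W(s+r)-W(s)$ and $N^{s}(\mathrm{d}z,(0,r]):=N(\mathrm{d}z,(s,s+r])$, with compensated version $\widetilde{N}^{s}$. By the standard properties of cylindrical Wiener processes and time-homogeneous Poisson random measures, $(W^{s},N^{s})$ is independent of $\mathcal{F}_s$ and has the same law as $(W,N)$. The key structural step is the \emph{cocycle} identity: writing the integral equation \eqref{eq:defi-weak-form} (with $U\equiv 0$) over $[s,s+r]$ and comparing it with the same equation started afresh from the $\mathcal{F}_s$-measurable datum $u(s,v)$ and driven by $(W^{s},N^{s})$, pathwise uniqueness (Theorem \ref{thm:pathwise-uniqueness}) forces
\[
u(s+r,v)=\Phi\big(r,\,u(s,v),\,W^{s}|_{[0,r]},\,N^{s}|_{[0,r]}\big),\qquad r\ge 0,\ \mathbb{P}\text{-a.s.}
\]
(the left limits in the Poisson integral are harmless, since jump times form a countable set).

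With this representation the Markov property follows from the independence (freezing) lemma: since $u(s,v)$ is $\mathcal{F}_s$-measurable while $(W^{s},N^{s})$ is independent of $\mathcal{F}_s$, conditioning on $\mathcal{F}_s$ reduces to freezing $w=u(s,v)$, so that
\[
\mathbb{E}\big[\phi(u(s+t,v))\mid \mathcal{F}_s\big]
=\mathbb{E}\big[\phi\big(\Phi(t,w,W^{s},N^{s})\big)\big]\Big|_{w=u(s,v)}.
\]
Because $(W^{s},N^{s})\stackrel{d}{=}(W,N)$ and the law of the solution is uniquely determined by the data (Lemma \ref{lem:uniqueness-law}), the frozen expectation equals $\mathbb{E}[\phi(u(t,w))]=(P_t\phi)(w)$, giving $\mathbb{E}[\phi(u(s+t,v))\mid\mathcal{F}_s]=(P_t\phi)(u(s,v))$. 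Taking expectations and using \eqref{defi:semi-group} (and the fact, established above, that $P_t\phi\in\mathcal{B}(L^2)$),
\[
(P_{t+s}\phi)(v)=\mathbb{E}\big[\phi(u(t+s,v))\big]
=\mathbb{E}\big[(P_t\phi)(u(s,v))\big]=\big(P_s(P_t\phi)\big)(v),
\]
so that $P_{t+s}=P_sP_t=P_tP_s$, which is the asserted Chapman--Kolmogorov identity.

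The main obstacle is the rigorous justification of the cocycle identity and of the measurable-functional representation $\Phi$. This is precisely where pathwise uniqueness and the Yamada--Watanabe-type result \cite[Theorem 2]{Ondrejat2004} enter: once the strong solution is known to be a measurable function of the data, the shift-invariance of the equation together with uniqueness pins down the two sides of the cocycle identity. Care is only needed in handling the stochastic integrals against $W^{s}$ and $\widetilde{N}^{s}$ under conditioning on $\mathcal{F}_s$, which is standard and can be carried out following \cite[Section 9.6]{peszat} and \cite[Section 4]{ABW-2010}.
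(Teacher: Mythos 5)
Your proposal is correct and follows essentially the same route as the paper: both arguments rest on the stationarity and independence of the increments of $W$ and $N$, the identification of the time-shifted solution with a solution started afresh from $u(s,v)$, and uniqueness (pathwise and in law) to pin down its distribution. The only difference is one of emphasis: you carry out the conditioning step explicitly via the cocycle identity and the freezing lemma, whereas the paper proves only the equality in law of $\big(u(h,0,v)\big)_{h\ge 0}$ and $\big(u(t+h,t,v)\big)_{h\ge 0}$ and delegates the remaining standard reduction to \cite[pp.~167--170]{peszat}.
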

\begin{proof}
    Given  $v \in L^2, 0 \le s \le t < \infty,$ we denote by $u(t,s,v)$ the value at time $t$ of the solution to \eqref{eq:nonlinear-no-control} starting at time $s$ from $v$. It is sufficient to prove that for all $t \ge 0$ and $v \in L^2$, the processes $\big( u(h,0,v): h \ge 0 \big)$ and $ \big( u(t+h,t,v): h \ge 0 \big)$  have the same distribution, ~(cf. the arguments on pp. 167-170 of \cite{peszat}). To view this, we note that
    \begin{align*}
        u(t+h,t,v) & = v + \int_{t}^{t+h} \mbox{div}_x \big({\tt A}(x, u(s,t,v),\nabla u(s,t,v)) + \vec{F}(u(s,t,v))\big)\,{\rm d}s \notag \\
  & \hspace{1cm}+\int_{t}^{t+h} {\tt h}(u(s,t,v))\,dW(s) +  \int_{t}^{t+h} \int_{\tt E}\eta(x,u(s,t,v);z)\widetilde{N}({\rm d}z,{\rm d}s) \notag \\
  & = v + \int_{0}^{h} \mbox{div}_x \big({\tt A}(x, u(t+s,t,v),\nabla u(t+s,t,v)) + \vec{F}(u(t+s,t,v))\big)\,{\rm d}s \notag \\
  & \hspace{1cm}+\int_{0}^{h} {\tt h}(u(t+s,t,v))\,dW^{t}(s) +  \int_{0}^{h} \int_{\tt E}\eta(x,u(t+s,t,v);z)\widetilde{N}^{t}({\rm d}z,{\rm d}s)\,,
    \end{align*}
    where $W^{t}(s):= W(t+s) - W(t)$ and $\widetilde{N}^{t}(s):= \widetilde{N}(t+s) -\widetilde{N}(t)$. Clearly, by the stationary property of both the Wiener process $W$ and the Poisson process $\widetilde{N}$, the pair $W^{t}$  and $W$, $\widetilde{N}^{t}$ and $\widetilde{N}$  have the same law. Since, the law of the solution $u(t,s,v)$ of \eqref{eq:nonlinear-no-control} with initial date $v \in L^2$ does not depend on the choice of the probability space, the Wiener process $W$ and the Poisson process $\widetilde{N}$ i.e., for two unique solutions  $u$ and $\bar{u}$ of \eqref{eq:nonlinear-no-control} with two different setup $\big(\Omega, \mathcal{F}, \mathbb{P}, W, N \big)$ and $\big( \bar{\Omega}, \bar{\mathcal{F}}, \bar{\mathbb{P}}, \bar{W}, \bar{N} \big)$ respectively, the law of  $u(t)$ and $\bar{u}(t)$ are same for all $t\ge 0$, we conclude that $u(t+h,t,v)$ and $u(h,0,v)$ have the same law.
\end{proof}
Following the argument as used in \cite[Lemma 5.3]{Majee2023}, we arrive at the following lemma.
\begin{lem}\label{lem:feller-semi-group}
Markov semi-group $\{P_t\}$ is a Feller semi-group i.e., $P_t$ maps $C_b(L^2)$ into itself. 
\end{lem}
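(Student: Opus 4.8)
The plan is to prove the only nontrivial assertion, namely that $v\mapsto (P_t\phi)(v)=\mathbb{E}[\phi(u(t,v))]$ is continuous on $L^2$ for each fixed $\phi\in C_b(L^2)$; boundedness of $P_t\phi$ is immediate from the boundedness of $\phi$. So I would fix a sequence $v_n\to v$ strongly in $L^2$ and aim to show $(P_t\phi)(v_n)\to(P_t\phi)(v)$. Since a sequence of reals converges to a limit $\ell$ provided every subsequence admits a further subsequence converging to $\ell$, it suffices to start from an arbitrary subsequence of $\{v_n\}$ and extract a further subsequence along which $(P_t\phi)(v_n)\to(P_t\phi)(v)$.

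Second, I would invoke the continuous-dependence result of Remark \ref{rem:cont-dependence-initial-cond}. As $v_n\to v$ strongly we have in particular $v_n\rightharpoonup v$ in $L^2$ and $\sup_n\|v_n\|_{L^2}<\infty$, so the hypotheses of that remark are met. It produces a further subsequence $\{v_{n_j}\}$, a new stochastic basis $(\bar\Omega,\bar{\mathcal F},\bar{\mathbb P},\bar{\mathbb F},\bar W,\bar N)$ and $\mathcal Z_T$-valued variables $\bar u_j,\bar u$ with $\mathcal L(u(\cdot,v_{n_j}))=\mathcal L(\bar u_j)$ on $\mathcal Z_T$, with $\bar u_j\to\bar u$ in $\mathcal Z_T$ $\bar{\mathbb P}$-a.s., and with $(\bar\Omega,\bar{\mathcal F},\bar{\mathbb P},\bar{\mathbb F},\bar W,\bar N,\bar u)$ a martingale solution of \eqref{eq:nonlinear-no-control} with initial datum $v$. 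By uniqueness in law (Lemma \ref{lem:uniqueness-law}), $\mathcal L(\bar u)=\mathcal L(u(\cdot,v))$. Since evaluation at a fixed time is Borel measurable from $\mathcal Z_T$ into $L^2$ (the weak and norm Borel structures on the separable space $L^2$ coincide), these identities of law give $(P_t\phi)(v_{n_j})=\bar{\mathbb{E}}[\phi(\bar u_j(t))]$ and $(P_t\phi)(v)=\bar{\mathbb{E}}[\phi(\bar u(t))]$ for every bounded Borel $\phi$.

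Third, I would pass to the limit $\bar{\mathbb{E}}[\phi(\bar u_j(t))]\to\bar{\mathbb{E}}[\phi(\bar u(t))]$. Convergence in $\mathcal Z_T$ entails strong convergence $\bar u_j\to\bar u$ in $L^2(0,T;L^2)$ $\bar{\mathbb P}$-a.s.; extracting one more subsequence, $\bar u_j(t)\to\bar u(t)$ strongly in $L^2$ for a.e.\ $t\in[0,T]$ and $\bar{\mathbb P}$-a.s. For such $t$, continuity of $\phi$ gives $\phi(\bar u_j(t))\to\phi(\bar u(t))$ a.s., and since $\phi$ is bounded the dominated convergence theorem yields the claimed convergence of expectations, closing the subsequence argument.

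The main obstacle is exactly the passage from path-space convergence in $\mathcal Z_T$ to strong $L^2$ convergence at the fixed time $t$: the $\mathbb D([0,T];L^2_w)$ component of $\mathcal Z_T$ only supplies weak convergence $\bar u_j(t)\rightharpoonup\bar u(t)$ at continuity points, whereas $\phi\in C_b(L^2)$ is continuous for the strong topology. This is what forces the use of the $L^2(0,T;L^2)$ component together with a sub-subsequence so that strong convergence holds for a.e.\ $t$, the uniform $L^2$-bounds furnished by \eqref{esti:bound-weak-solun} providing the integrability needed for dominated convergence. I note that the naive alternative---deriving an $L^1$-contraction estimate of the form $\bar{\mathbb{E}}\|u(t,v_n)-u(t,v)\|_{L^1}\le\|v_n-v\|_{L^1}$ directly on a common basis, as in the proof of Theorem \ref{thm:pathwise-uniqueness}---does give clean stability, but only in $L^1$, which is too weak to test against a strongly continuous $\phi$; this is precisely why the Skorokhod-representation route is the natural one here.
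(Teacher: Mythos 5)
Your overall strategy differs from the paper's: the paper disposes of this lemma by appealing to the analogous argument in \cite[Lemma 5.3]{Majee2023} (in essence the pathwise-uniqueness/$L^1$-contraction route carried out on a common stochastic basis), whereas you transplant the Skorokhod-representation argument that the paper itself uses only for the \emph{sequentially weakly} Feller property in the proof of Theorem \ref{thm:existence-invariant-measure}. Your diagnosis of why the $L^1$-contraction gives stability that is too weak to test against a norm-continuous $\phi$ is apt. Nevertheless, your argument has a genuine gap at the last step, namely the evaluation at the \emph{fixed} time $t$.

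Concretely: convergence in $\mathcal{Z}_T$ gives you (i) $\bar u_j\to\bar u$ in $L^2(0,T;L^2)$ $\bar{\mathbb P}$-a.s.\ and (ii) $\bar u_j\to\bar u$ in $\mathbb{D}([0,T];L^2_w)$. From (i), after passing to a further subsequence, you obtain $\bar u_j(s)\to\bar u(s)$ strongly in $L^2$ for almost every $s\in[0,T]$, $\bar{\mathbb P}$-a.s.; but the exceptional null set of times is not under your control and may contain the particular $t$ at which $P_t$ is being evaluated. From (ii) you only get \emph{weak} convergence $\bar u_j(t)\rightharpoonup\bar u(t)$ at (a.s.) continuity points, which cannot be tested against $\phi\in C_b(L^2)$, continuous for the norm topology. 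Hence your argument establishes that $v\mapsto(P_t\phi)(v)$ is sequentially continuous for almost every $t$, not for every $t$ as the statement requires. To close the gap you would need either strong $L^2$-convergence at the fixed time (e.g.\ convergence in $\mathbb{D}([0,T];L^2)$ with the norm topology, which is not part of $\mathcal{Z}_T$ and is not furnished by Remark \ref{rem:cont-dependence-initial-cond}), or a quantitative stability bound of the form $\mathbb{E}\big[\|u(t,v_n)-u(t,v)\|_{L^2}^2\big]\le C\|v_n-v\|_{L^2}^2$ at fixed $t$ --- which the structure of the operator ${\tt A}$ (monotone only in the gradient variable, merely Lipschitz with a $|\nabla u|^{p-1}$ weight in the second argument) does not readily provide; this is precisely why the paper's uniqueness argument is forced into the $L^1$ framework via $\Upsilon_\vartheta$.
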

\begin{defi} \label{defi:se-weak-bounded}
A function $\phi \in SC(L_w^2)$ if and only if $\phi$ is sequentially continuous with respect to weak topology on $L^2$. By  $SC_b(L_w^2)$, we denote set of all bounded functions in $ SC(L_w^2)$. 
\end{defi}
The following inclusion holds; see \cite{Ferrario2019,Seidler1999}:
\begin{align*}
C_b(L_w^2)\subset SC_b(L_w^2)\subset C_b(L^2)\subset \mathcal{B}_b(L^2)\,.
\end{align*}
\begin{defi}
The family $\{P_t\}$ is said to be sequentially weakly Feller if and only if  $P_t$ maps  $SC_b(L_w^2)$ into itself. 
\end{defi}
\subsection{Proof of Theorem \ref{thm:existence-invariant-measure}}
To prove existence of an invariant measure, we will use the result of Maslowski-Seidler\cite{Seidler1999}, a modification of the Krylov-Bogoliubov technique \cite{Krylov1937}.   We need to show the following; see \cite{Seidler1999}.
\begin{itemize}
\item[i)] Sequentially weakly Feller:  $\{P_t\}$ is sequentially weakly Feller,
\item[ii)] Boundedness in probability: there exists a Borel probability measure $\nu$ on $L^2$ and $T_{0} \ge 0$ such that for any $\eps>0$, there exists $R\equiv R(\eps)>0$ such that 
\begin{align*}
\sup_{T\ge T_{0}}\frac{1}{T} \int_0^T P_{t}^{*} \nu \Big\{ \|x\|_{L^2}>R\Big\}\,{\rm d}t < \eps.
\end{align*}
\end{itemize}
\noindent{Proof of ${\rm i)}$}. Let $0<t\le T$, $v\in L^2$ and $\phi\in SC_b(L_w^2)$ be fixed. Let $\{v_n\}$ be a $L^2$-valued sequence such that $v_n\rightharpoonup v$ in $L^2$. Let $u_n$ be a strong solution of  \eqref{eq:nonlinear-no-control}, defined on the stochastic basis $(\Omega, \mathcal{F}, \mathbb{P}, \mathbb{F})$, on $[0,T]$ with initial datum $v_n$.  Then by Remark \ref{rem:cont-dependence-initial-cond}, there exist a subsequence $\{u_{n_j}\}_{j=1}^\infty$,  a stochastic basis $\big(\bar{\Omega}, \bar{\mathcal{F}},
\bar{\mathbb{P}}, \bar{\mathbb{F}}\big)$, $L^2$-valued cylindrical Wiener processes $\bar{W}_j$ and $\bar{W}$ with $\bar{W}_j(\bar{\omega})=\bar{W}(\bar{\omega});~j\ge 1$ for  all $\bar{\omega}\in \bar{\Omega}$, independent time-homogeneous Poisson random measures $\bar{N}_j(\bar{\omega})= \bar{N}(\bar{\omega}) );~j\ge 1$ for all $\bar{\omega}\in \bar{\Omega}$, and
$\mathcal{Z}_T$-valued Borel measurable random variables $\bar{u}$ and $\{\bar{u}_j\}_{j=1}^\infty$ such that 
 \begin{align}
 \mathcal{L}(u_{n_j})=\mathcal{L}(\bar{u}_j)~~\text{on}~~\mathcal{Z}_T; \quad \bar{u}_j\goto \bar{u}~~\text{in}~~\mathcal{Z}_T,~~\bar{\mathbb{P}}\text{-a.s.}. \label{con-con-con}
 \end{align}
Moreover, the tuple $\big(\bar{\Omega},
\bar{ \mathcal{F}}, \bar{\mathbb{P}}, \bar{\mathbb{F}}, \bar{W},\bar{ N}, \bar{u}\big)$ is a martingale solution of \eqref{eq:nonlinear-no-control} with initial data $u_0$. Since $\phi\in SC_b(L_w^2)$, by \eqref{con-con-con}, $\bar{\mathbb{P}}$-a.s., $\phi(\bar{u}_j(t)) \goto \phi(\bar{u}(t))$ for any fixed $t$. Moreover, by dominated convergence theorem
\begin{align}
\lim_{j\goto \infty} \bar{\mathbb{E}}\big[ \phi(\bar{u}_j(t)) \big]=  \bar{\mathbb{E}}\big[ \phi(\bar{u}(t)) \big]\,. \label{con-con-con-1}
\end{align}
Since $\mathcal{L}(\bar{u}_j)= \mathcal{L}(u_{n_j})$ on $\mathbb{D}([0,T];L_w^2])$, one has $\mathcal{L}(\bar{u}_j)= \mathcal{L}(u_{n_j})$ on $L_w^2$ and therefore
\begin{align}
 \bar{\mathbb{E}}\big[ \phi(\bar{u}_j(t)) \big]=  \mathbb{E}\big[ \phi(\bar{u}_{n_j}(t)) \big]:= (P_t\phi)(v_{n_j})\,. \label{con-con-con-2}
\end{align}
Let $u$ be a strong solution of \eqref{eq:nonlinear-no-control} with initial data $u_0$. Since $\bar{u}$ is also a martingale solution of \eqref{eq:nonlinear-no-control} with initial data $u_0$, by Lemma \ref{lem:uniqueness-law}, we see that $\mathcal{L}(u)=\mathcal{L}(\bar{u})$ on $\mathcal{Z}_T$, and hence
\begin{align}
(P_t \phi)(v):= \mathbb{E}\big[ \phi(u(t))\big]= \bar{\mathbb{E}}\big[ \phi(\bar{u}(t)) \big]\,. \label{con-con-con-3}
\end{align}
In view of \eqref{con-con-con-1}, \eqref{con-con-con-2} and \eqref{con-con-con-3}, we get
\begin{align*}
\lim_{j\goto \infty} (P_t\phi)(v_{n_j})=\lim_{j\goto \infty} \bar{\mathbb{E}}\big[ \phi(\bar{u}_j(t)) \big]=  \bar{\mathbb{E}}\big[ \phi(\bar{u}(t)) \big]=\mathbb{E}\big[ \phi(u(t))\big]=(P_t \phi)(v)\,.
\end{align*}
Moreover, by uniqueness, the whole sequence converges. This completes the proof of the assertion ${\rm i)}$. 
\vspace{.2cm}

\noindent{Proof of ${\rm ii)}$}. An application of It\^{o}-formula to $u\mapsto \frac{1}{2}\|u\|_{L^2}^2$, integration by parts formula along with Gauss-Green's theorem yields, after taking expectation
\begin{align*}
& \frac{1}{2} \mathbb{E}\big[\|u(t)\|_{L^2}^2\big] - \frac{1}{2}\|u_0\|_{L^2}^2 + \mathbb{E}\Big[ \int_0^t \big\langle {\tt A}(\cdot, u(s), \nabla u(s)), \nabla u(s) \big\rangle\,{\rm d}s\Big] \\
& = \frac{1}{2} \mathbb{E}\Big[ \int_0^t \|{\tt h}(u(s))\|_{\mathcal{L}_2(L^2)}^2\,{\rm d}s\Big] +
\frac{1}{2} \mathbb{E}\Big[ \int_0^t\int_{\tt E} \|\eta(\cdot, u_*(s-);z)\|_{L^2}^2\, m({\rm d}z)\,{\rm d}s\Big] \,.
\end{align*}
In view of the assumption ${\rm ii)}$, \ref{A2}, we have 
\begin{align*}
C_1 \mathbb{E}\Big[\int_0^t \|\nabla u(s)\|_{L^p}^p\,{\rm d}s \Big] \le  \mathbb{E}\Big[ \int_0^t \big\langle {\tt A}(\cdot, u(s), \nabla u(s)), \nabla u(s) \big\rangle\,{\rm d}s\Big] + t \|K\|_{L^1}\,, 
\end{align*}
and therefore
\begin{align*}
& \mathbb{E}\big[\|u(t)\|_{L^2}^2\big] + \mathbb{E}\Big[\int_0^t \big( 2C_1 \|\nabla u(s)\|_{L^p}^p- \|{\tt h}(u(s))\|_{\mathcal{L}_2(L^2)}^2 - \int_{\tt E} \|\eta(\cdot, u_*(s-);z)\|_{L^2}^2\, m({\rm d}z) \big)\,{\rm d}s \Big] \notag \\
& \le  2t \|K\|_{L^1} + \|u_0\|_{L^2}^2 \,.
\end{align*}
Thanks to  the condition \eqref{cond:extra-sigma}, we have,  for any $T>0$
\begin{align*}
\frac{1}{T} \int_0^T \mathbb{E}\big[\|u(t)\|_{L^2}^p\big]\,{\rm d}s \le \frac{1}{\delta} \Big( 2\|K_1\|_{L^1} + C_{hg} + \frac{1}{T}\|u_0\|_{L^2}^2\Big)\,.
\end{align*}
Taking $\nu=\delta_{u_0}$, by Markov inequality, we get for any $T>0$ and $R>0$
\begin{align*}
& \frac{1}{T}\int_0^T (P_{t}^{*} \delta_{u_0}) \big\{ \|x\|_{L^2} >R\big\}\,{\rm d}t = \frac{1}{T}\int_0^T \mathbb{P}\big\{ \|u(t,u_0)\|_{L^2} >R\big\}\,{\rm d}t  \notag \\
& \le \frac{1}{R^p}\, \frac{1}{T}\int_0^T \mathbb{E}\big[ \| u(t)\|_{L^2}^p\big]\,{\rm d}t \le  \frac{1}{ R^p \delta} \Big( 2\|K_1\|_{L^1} +  C_{hg} + \frac{1}{T}\|u_0\|_{L^2}^2\Big)\,.
\end{align*}
The assertion ${\rm ii)}$ follows from the above inequality once we choose $T_0 \ge 0$ and $R$ sufficiently large which depends on $\epsilon, \|u_0\|_{L^2}, \|K_1\|_{L^1}, C_{hg}$ and $\delta$.
This completes the proof of Theorem \ref{thm:existence-invariant-measure}.
\begin{rem}
The result of Theorem \ref{thm:existence-invariant-measure} holds true for SPDE driven by additive noise as  in  \cite{Vallet2021}. Indeed,  in  \cite{Vallet2021}, authors have studied the problem 
 \begin{align}
  {\rm d} u -\mbox{div}_x \big( {\tt A}(x, u, \grad u) + \vec{F}(u)\big)\,{\rm d}t &= {\tt \Phi}\,dW(t), \quad 
  u(0,\cdot)=u_0(\cdot)\,, \label{eq:additive-noise}
 \end{align}
where $W$ is a cylindrical Wiener process in $L^2$ and ${\tt \Phi}$ is a progressively measurable Hilbert-Schmidt operator on $L^2$ with ${\tt \Phi}\in L^2(\Omega; C([0,T];\mathcal{L}_2(L^2)))$, and showed existence  a path-wise unique strong solution $u$ of \eqref{eq:additive-noise} for every $T>0$ such that  $\mathbb{P}$-a.s., $u\in C([0,T;L^2])\cap L^p(0,T;W_0^{1,p})$.
Moreover, Lemmas \ref{lem:uniqueness-law}-\ref{lem:feller-semi-group} and the sequentially weakly Feller semi-group property hold. Therefore, to prove existence of invariant measure, we only need to prove the boundedness in probability. To do so, we apply It\^{o}-formula to the functional $u\goto \|u\|_{L^2}^2$, use ${\rm ii)}$ of \ref{A2}  to have
\begin{align*}
\mathbb{E}\big[\|u(t)\|_{L^2}^2\big] +  2C_1 \mathbb{E}\Big[\int_0^t  \|\nabla u(s)\|_{L^p}^p\,{\rm d}s \Big] \le  2t \|K\|_{L^1} + \|u_0\|_{L^2}^2  + \mathbb{E}\Big[\int_0^t \|\Phi(s)\|_{\mathcal{L}_2(L^2)}^2\,{\rm d}s\Big]\,.
\end{align*}
In view of Poincar{\'e} inequality along with the Sobolev embedding  $W_0^{1,p}\hookrightarrow L^2 $ with $C_P$ and $C_S$ being Poincar{\'e} and Sobolev constants respectively, we have
\begin{align*}
\frac{1}{T}\int_0^T \mathbb{E}\big[ \|u(t)\|_{L^2}^p\big]\,{\rm d}t & \le C_S^p \frac{1}{T}\int_0^T \mathbb{E}\big[ \|u(t)\|_{W_0^{1,p}}^p\big]\,{\rm d}t \le 
C_S^p C_{P}^p 
\frac{1}{T}\int_0^T \mathbb{E}\big[ \|\nabla u(t)\|_{L^p}^p\big]\,{\rm d}t \notag \\
& \le \frac{1}{2C_1 C_S^p C_P^p} \Big\{  \big(2 \|K\|_{L^1} + \mathbb{E}\big[ \|{\tt \Phi}\|_{C([0,T];\mathcal{L}_2(L^2))}^2\big]\big) + \frac{1}{T}\|u_0\|_{L^2}^2 \Big\}\,.
\end{align*}
Hence by Markov inequality, for any $T>0$ and $R>0$
\begin{align}
\frac{1}{T}\int_0^T (P_{t}^{*} \delta_{u_0}) \big\{ \|x\|_{L^2} >R\big\}\,{\rm d}t&= \frac{1}{T}\int_0^T \mathbb{P}\big\{ \|u(t)\|_{L^2} >R\big\}\,{\rm d}t \le \frac{1}{R^p}\, \frac{1}{T}\int_0^T \mathbb{E}\big[ \| u(t)\|_{L^2}^p\big]\,{\rm d}t \notag \\
& \le  \frac{1}{2C_1 R^p C_S^p C_P^p} \Big\{  \big(2 \|K\|_{L^1} + \mathbb{E}\big[ \|{\tt \Phi}\|_{C([0,T];\mathcal{L}_2(L^2))}^2\big]\big) + \frac{1}{T}\|u_0\|_{L^2}^2 \Big\}\,.
\label{inq:final-additive}
\end{align}
Therefore, for fixed $T>0$, r.h.s of \eqref{inq:final-additive} can be made sufficiently small for suitable choice of $R$---which then yields the assertion. 
\end{rem}

\noindent{\bf Acknowledgement:} The first author acknowledges the financial support by CSIR~(09/086(1440)/2020-EMR-I), India and the second author acknowledges the financial support by Department of Science and Technology, Govt. of India-the INSPIRE fellowship~(IFA18-MA119). 

\vspace{.2cm}
\noindent{\bf Data availability:} No data sets were generated during the current study and therefore data sharing is not applicable to this article. 

\vspace{.2cm}
\noindent{\bf Conflict of interest:} The authors have not disclosed any competing interests.

\end{document}